\newtheorem{theorem}{Theorem}[section]
\newtheorem{corollary}[theorem]{Corollary}
\newtheorem{definition}[theorem]{Definition}
\newtheorem{lemma}[theorem]{Lemma}
\newtheorem{proposition}[theorem]{Proposition}
\newtheorem{remark}[theorem]{Remark}
\numberwithin{equation}{section}
\newcommand{\bfa}{{\boldsymbol{a}}}
\newcommand{\bfc}{\mathbf{c}}
\newcommand{\bfg}{\mathbf{g}}
\newcommand{\bfp}{{\boldsymbol{p}}}
\newcommand{\bfq}{{\boldsymbol{q}}}
\newcommand{\bfr}{{\boldsymbol{r}}}
\newcommand{\bfs}{{\boldsymbol{s}}}
\newcommand{\bft}{{\boldsymbol{t}}}
\newcommand{\bfu}{{\boldsymbol{u}}}
\newcommand{\bfv}{{\boldsymbol{v}}}
\newcommand{\bfx}{{\boldsymbol{x}}}
\newcommand{\bfy}{{\boldsymbol{y}}}
\newcommand{\bfxi}{\boldsymbol{\xi}}
\newcommand{\bfzero}{\mathbf{0}}
\newcommand{\sfx}{{\mathsf{x}}}
\newcommand{\sfy}{{\mathsf{y}}}
\newcommand{\sfp}{{\mathsf{p}}}
\newcommand{\cA}{\mathcal{A}}
\newcommand{\cB}{\mathcal{B}}
\newcommand{\cD}{\mathcal{D}}
\newcommand{\cG}{\mathcal{G}}
\newcommand{\cH}{\mathcal{H}}
\newcommand{\cL}{\mathcal{L}}
\newcommand{\cO}{\mathcal{O}}
\newcommand{\cT}{\mathcal{T}}
\newcommand{\cU}{\mathcal{U}}
\newcommand{\cX}{\mathcal{X}}
\newcommand{\CC}{\mathbb{C}}
\newcommand{\RR}{\mathbb{R}}
\newcommand{\TT}{\mathbb{T}}
\newcommand{\ZZ}{\mathbb{Z}}
\newcommand{\diag}{\operatorname{diag}}
\newcommand{\Li}{\operatorname{Li}}
\newcommand{\Lie}{\operatorname{Lie}}
\renewcommand{\max}{\operatorname{max}}
\newcommand{\rra}{\rightrightarrows}
\title{Symplectic Groupoids for Cluster Manifolds}
\author{Songhao Li}
\address[Songhao Li]{University of Notre Dame, Department of Mathematics, Notre Dame, IN 46556, USA}
\email{sli19@nd.edu}
\author{Dylan Rupel}
\address[Dylan Rupel]{University of Notre Dame, Department of Mathematics, Notre Dame, IN 46556, USA}
\email{drupel@nd.edu}
\begin{document}
\begin{abstract}
  We construct symplectic groupoids integrating log-canonical Poisson structures on cluster varieties of type $\cA$ and $\cX$ over both the real and complex numbers.
  Extensions of these groupoids to the completions of the cluster varieties where cluster variables are allowed to vanish are also considered.  

  In the real case, we construct source-simply-connected groupoids for the cluster charts via the Poisson spray technique of Crainic and M\u{a}rcu\c{t}.
  These groupoid charts and their analogues for the symplectic double and blow-up groupoids are glued by lifting the cluster mutations to groupoid comorphisms whose formulas are motivated by the Hamiltonian perspective of cluster mutations introduced by Fock and Goncharov.
\end{abstract}
\maketitle

\section{Introduction}
This is the first of a series of papers whose aim is to implement the Weinstein program of geometric quantization for Poisson manifolds \cite{Wei91} in the case of Poisson structures compatible with a cluster structure.
In this first paper, we construct the symplectic groupoids for both the cluster $\cA$-varieties and the cluster $\cX$-varieties. 

The cluster $\cA$-varieties are the geometric realization of cluster algebras defined by Fomin and Zelevinsky \cite{FZ02} as the culmination of their study of total positivity and canonical bases for algebraic groups \cite{BFZ96,FZ99}.
Many varieties arising in Lie theory, e.g.\ Grassmannians and double Bruhat cells, are examples of $\cA$-varieties \cite{BFZ05,Sco06,GSV03,Wil13b}.
The cluster $\cA$-varieties are often endowed with a class of compatible Poisson structures in the sense of Gekhtman, Shapiro, and Vainshtein \cite{GSV10}.
In particular, our results immediately give rise to a trio of symplectic/Poisson groupoids integrating a compatible Poisson structure on any cluster $\cA$-variety, including those mentioned above. 

The cluster $\cX$-varieties were introduced by Fock and Goncharov \cite{FG09a} in their study of higher Teichm\"uller space.
An alternative view of double Bruhat cells and Grassmannians reveal cluster $\cX$-variety structures as well.
A cluster $\cX$-variety is always endowed with a canonical Poisson structure.
In particular, as with the cluster $\cA$-varieties, our results produce symplectic/Poisson groupoids integrating any cluster $\cX$-variety.
If an $\cA$-variety carries a compatible Poisson structure, then the natural map from this $\cA$-variety to the corresponding $\cX$-variety is Poisson.
We provide a lift of this map to comorphisms between analogous groupoids over the $\cA$- and $\cX$-varieties.

In the case of cluster algebras, the quantum $\cA$-varieties \cite{BZ05} and the quantum $\cX$-varieties \cite{FG09a} are both concrete examples of deformation quantization of Poisson manifolds.
In fact, the $q$-quantization of log-canonical variables can be realized simply as the exponential version of the standard Weyl quantization for canonical variables \cite{FG09c}.
Geometric quantization for symplectic manifolds takes one step further by constructing a Hilbert space on which the quantized algebra acts, but this requires that the cohomology class of the symplectic 2-form be integral (see e.g.\ \cite{BW97}).
The notion of symplectic groupoids was introduced by Weinstein \cite{Wei87}, Karas\"{e}v \cite{Kar89} and Zakrzewski \cite{Zak90a, Zak90b}.
Weinstein's motivation was to find a geometric quantization schema for Poisson manifolds \cite{Wei91}, which has only been successfully implemented for a handful of examples \cite{Tan06, Haw08, BCST12}.
The concrete nature of cluster coordinates provides an ideal testing ground for groupoid quantization.
Indeed, this was implicitly implemented by Fock and Goncharov \cite{FG09c} in the case of $\cX$-varieties.

In this paper, we take the first step towards the groupoid quantization for both cluster $\cA$-varieties and cluster $\cX$-varieties by describing their symplectic groupoids.
The construction of each is detailed below as we discuss the organization of the paper.

We being in Section~\ref{sec:Poisson generalities} with a general discussion of Poisson manifolds and their associated algebroids and groupoids.
In Section~\ref{sec:local}, we construct three groupoids $\cG$, $\cB$ and $\cD$ for a log-canonical Poisson structure on a vector space $L$, these each possess important properties which justify their consideration.
The groupoid $\cG \rra L$ is the source-simply-connected symplectic groupoid.
The definition of $\cG \rra L$ utilizes the construction of local symplectic groupoids by Poisson sprays \cite{CM11, CMS17}.
For a log-canonical Poisson structure, there is a natural choice of a Poisson spray which yields $\cG$ under the spray construction.
An important note is that, although this provides the source-simply-connected groupoid, the source and the target maps of $\cG \rra L$ will be transcendental and in particular they do not conform (strictly speaking) to the algebraic nature of cluster theory.

One fix for this is an analogue of the symplectic double introduced by Fock and Goncharov \cite{FG09c}.
This groupoid which we denote $\cD \rra L$ is a source-connected Poisson groupoid.
Over an orthant $L^\times$ of $L$ the groupoid $\cD \rra L$ is actually symplectic and we denote its restiction by $\cD^\times \rra L^\times$, our construction gives a covering of the symplectic groupoid considered by Fock and Goncharov in the case of $\cX$-varieties.

The linear degeneracy of the Poisson structure on $\cD$ over the coordinate hyperplanes is corrected via the blow-up construction introduced in works of the first author \cite{Li13, GL14}.
This gives rise to the groupoid $\cB \rra L$ which is source-connected and symplectic.
Comparing to $\cG \rra L$, the main advantage of $\cB \rra L$ is that the source and the target maps of $\cB \rra L$ are rational.
In this way, it seems likely that the groupoid $\cB$ will be most useful in the theory of cluster algebras.
Observe that both $\cB \rra L$ and $\cD \rra L$ receive maps from the source-simply-connected symplectic groupoid $\cG$.

The groupoids of the cluster $\cA$-variety and cluster $\cX$-variety are not constructed globally but rather are glued from groupoid charts over each cluster chart.
In Section~\ref{sec:mutations}, we give the explicit formulas for the cluster groupoid mutations $\mu: \cG \to \cG'$, $\mu: \cB \to \cB'$ and $\mu: \cD \to \cD'$.
We begin in Section~\ref{sec:cluster mutations} in the setting of generalized cluster mutations $\mu: L \to L'$ which we decompose into two maps $\varphi^1$ and $\tau$ according to the Hamiltonian perspective of mutations \cite{FG09a, GNR17}.
The first map $\varphi^1$ is the time-$1$ flow of a Hamiltonian vector field defined using the Euler dilogarithm function; the second map $\tau$ is a transformation of the log-canonical coordinates.
We then lift $\varphi^1$ and $\tau$ to groupoid maps which compose to give the cluster groupoid mutations.
In principle, a Poisson map induces a Lie algebroid comorphism \cite{HM90}, which is then lifted to the groupoid comorphisms \cite{Cat04, CDW13}.
Indeed, the Poisson ensemble map $\rho: L_\cA \to L_\cX$ can be lifted only to a groupoid comorphism when the exchange matrix is not square.
However, if a Poisson morphism is a diffeomorphism, then the dual bundle map of the induced Lie algebroid comorphism is a Lie algebroid morphism, which then lifts to an honest groupoid morphism.
Since both the Hamiltonian flow $\varphi^1$ and the transformation $\tau$ are diffeomorphisms on an open dense set, we may lift the cluster mutation $\mu = \tau \circ \varphi^1$ to cluster groupoid mutations.
\bigskip

Throughout the paper, we use the following notations.
\begin{itemize}
	\item We write $\RR_\times = (0, \infty)$, $\bar\RR_\times = [0, \infty)$ and $\CC_\times = \CC \setminus \{0\}$.
	\item For the cluster charts, we use the following notations:
	$$
		L = \RR^m \text{ or } \CC^m, \quad L^\times = \RR_\times^m \text{ or } \CC_\times^m, \quad \bar L^\times = \bar\RR_\times^m.
	$$
	\item We denote vectors by boldface, e.g. $\bfx = (x_1, \ldots, x_m)$.
        \item The \emph{Hadamard product} of two vectors $\bfx$ and $\bfy$ is given by $\bfx \circ \bfy = (x_1y_1, \ldots, x_my_m)$.
	\item For a vector $\bfx$ and a real number $t$, we set $\bfx^t := (x_1^t, \ldots, x_m^t)$.
	\item For a smooth (resp.\ complex) manifold $M$, we denote the space of smooth (resp.\ holomorphic) functions by $\cO_M$.
	\item For a vector bundle $E$ over $M$, we denote the space of sections by $\Gamma(M, E)$.
          In particular, we denote the space of vector fields by $\cT_M = \Gamma(M, TM)$ and the space of multi-vector fields by $\cT^k_M = \Gamma(M, \wedge^k TM)$.
          To follow the conventional notation, we denote the space of differential $k$-forms by $\Omega^k(M) = \Gamma(M, \wedge^k T^*M)$.
\end{itemize}

\subsection*{Acknowledgements}
The authors would like to thank Sam Evens, Rui Loja Fernandes, Michael Gekhtman, Marco Gualtieri, and Alan Weinstein for useful discussions related to this project.

\section{Poisson structures and symplectic groupoids}
\label{sec:Poisson generalities}
We begin by recalling the equivalent notions of Poisson brackets and Poisson bi-vectors.
\begin{definition} 
  \label{def: PoissonMfld}
  Let $M$ be either a smooth manifold or a complex manifold.
  A \emph{Poisson structure} on $M$ is one of the following two equivalent structures:
  \begin{enumerate}
    \item a \emph{Poisson bracket}
      $$\{\cdot, \cdot\}: \cO_M \times \cO_M \to \cO_M$$
      which is a Lie bracket satisfying the Leibniz rule
      $$\{f, gh\} = g\{f,h\} + h\{f,g\};$$
    \item a \emph{Poisson bi-vector} $\pi \in \cT^2_M$ such that $[\pi, \pi] = 0$, where $[\cdot, \cdot]$ is the Schouten-Nijenhuis bracket.
  \end{enumerate}
  We say $f \in \cO_M$ is a Casimir if $\{f, g\} = 0$ for every $g\in \cO_M$.
\end{definition}

The two notions are related by the formula: $\{f, g\} = \pi (df \otimes dg)$ for $f, g\in \cO_M$.
The pair $(M, \pi)$, or equivalently $(M, \{\cdot,\cdot\})$, is called a \emph{Poisson manifold}.
The \emph{Hamiltonian vector field} of $f \in \cO_M$ is defined as the contraction $X_f = \iota_{df}\pi$ or equivalently as the vector field naturally associated to the derivation $\{f,\cdot\}$.
A Poisson map from $(M_1, \pi_1)$ to $(M_2, \pi_2)$ is a map $\varphi: M_1\to M_2$ such that $\varphi_*\pi_1 = \pi_2$ or equivalently $\{\varphi^*f, \varphi^*g\} = \varphi^*(\{f, g\})$ for $f, g \in \cO_{M_2}$.
For $f \in \cO_{M_2}$, the pullback of the Hamiltonian vector field $X_f$ is $X_{\varphi^*f}$.
A Poisson map $\varphi: (M_1, \pi_1) \to (M_2, \pi_2)$ is \emph{complete} if the pullback of a complete Hamiltonian vector field is again a complete vector field.

A bi-vector $\pi \in \cT^2_M$ is called \emph{non-degenerate} if the bundle map
\[\pi^\flat: \Omega^1(M) \to \cT_M, \qquad \theta \mapsto \iota_\theta \pi\]
is invertible.
The inverse of this bundle map then defines a non-degenerate 2-form $\omega \in \Omega^2(M)$.
That is, the bundle map
\[\omega^\sharp: \cT_M \to \Omega^1(M), \qquad v \mapsto \iota_v \omega\]
is the inverse of $\pi^\flat$.
The condition $[\pi,\pi]=0$ is equivalent to $d\omega = 0$, so a non-degenerate Poisson bi-vector is the same as a symplectic 2-form.
Hence for a non-degenerate Poisson bi-vector $\pi$, we denote the corresponding symplectic 2-form by $\pi^{-1}$ and for a symplectic 2-form $\omega$, we denote the corresponding Poisson bi-vector as $\omega^{-1}$.

\begin{definition}
  For a Poisson manifold $(M, \pi)$, a symplectic realization is a symplectic manifold $(S, \omega)$ together with a surjective Poisson map $\rho: (S, \omega) \to (M, \pi)$.
\end{definition}

Of particular importance among all the symplectic realizations is the symplectic groupoid, but first we recall the notion of Lie groupoids and Lie algebroids.
\begin{definition} \label{def:groupoid}
  A \emph{groupoid} $\cG\rightrightarrows M$ consists of two sets $\cG$ and $M$ with the following maps:
  \begin{enumerate}
    \item a surjective \emph{source map} $\alpha: \cG \to M$ and a surjective \emph{target map} $\beta: \cG \to M$;
    \item an injective identity map $\mathtt{1}: M \to \cG, \enskip x \mapsto \mathtt{1}_x$;
    \item an associative multiplication map $m: \cG {_\alpha \times_\beta} \cG \to \cG, \enskip (g, h) \mapsto gh$;
    \item and an involutive inversion map $\iota: \cG \to \cG, \enskip g \mapsto g^{-1}$;
  \end{enumerate}
  which satisfy the following properties:
  \begin{enumerate}
    \item $\alpha(\mathtt{1}_x) = \beta(\mathtt{1}_x) = x$;
    \item $\alpha(gh) = \alpha(h), \enskip \beta(gh) = \beta(g)$;
    \item $\alpha(g^{-1}) = \beta(g), \enskip \beta(g^{-1}) = \alpha(g)$;
    \item $(\mathtt{1}_x)^{-1} = \mathtt{1}_x$;
    \item $\mathtt{1}_{\beta(g)}g=g=g\mathtt{1}_{\alpha(g)}, \enskip g^{-1}g=\mathtt{1}_{\alpha(g)}, \enskip gg^{-1}=\mathtt{1}_{\beta(g)}$.
  \end{enumerate}
  A \emph{Lie groupoid} $\cG \rightrightarrows M$ has the following additional properties:
  \begin{enumerate}
    \item $\cG$ and $M$ are smooth (or complex) manifolds;
    \item the source and target $\alpha,\beta: \cG \to M$ are surjective submersions;
    \item the multiplication map $m: \cG {_\alpha \times_\beta} \cG \to \cG$ is smooth (or holomorphic);
    \item the inversion map $i: \cG \to \cG$ is smooth (or holomorphic).
  \end{enumerate}
  A Lie groupoid $\cG \rightrightarrows M$ is \emph{source-connected} if the source fiber $\alpha^{-1}(x)$ is connected for every $x \in M$; it is \emph{source-simply-connected} if the source fiber $\alpha^{-1}(x)$ is connected and simply-connected for every $x \in M$.
\end{definition}

A groupoid is naturally considered as a category with objects the elements of $M$ and morphisms the elements of $\cG$.
Then a morphism of groupoids from $\cG_1\rra M_1$ to $\cG_2\rra M_2$ is simply a functor between these categories.

Next we recall the notion of a Lie algebroid, which is the infinitesimal object of a Lie groupoid.
\begin{definition} \label{def:algeroid}
  For a smooth (or holomorphic) manifold $M$, a Lie algebroid over $M$ is a triple $(A, [\cdot, \cdot], \rho)$, where
  \begin{enumerate}
    \item $A$ is a vector bundle over a $M$;
    \item $[\cdot, \cdot]$ is a Lie bracket on the space of sections $\Gamma(M, A)$;
    \item $\rho: A \to TM$ is a bundle morphism preserving the Lie bracket;
  \end{enumerate}
  with Lie bracket satisfying the Leibniz rule: for sections $X, Y \in \Gamma(M, A)$ and $f \in \cO_M$,
  \[[X, fY] = f\cdot[X, Y] + (\rho X)(f) \cdot Y.\]
\end{definition}

There is a Lie functor from the Lie groupoids to the Lie algebroids.
For a Lie groupoid $\cG \rra M$, we define its Lie algebroid $A = \Lie \cG$ as follows.
As a vector bundle, we have
\begin{equation} \label{eq:Liefunctor}
	A = \ker \left(\alpha_*: T\cG|_{\mathtt{1}_M} \to TM \right).
\end{equation}
The Lie bracket is the bracket of left-invariant vector fields and the anchor map is the restriction of the target map $\beta_*: T\cG \to TM$ to $A$.
In this case, we say the Lie groupoid $\cG$ \emph{integrates} the Lie algebroid $A$.

Given a Lie groupoid $\cG \rra M$, its \emph{$k$-nerve},
\[\cG^{(k)} := \{(g_1, g_2, \ldots, g_k) \in \cG^k ~|~ \beta(g_{i+1}) = \alpha(g_i)\},\]
is the set of $k$-composable elements.
In particular, we have
\[\cG^{(2)} = \cG {_\alpha \times_\beta} \cG,\qquad \cG^{(1)} = \cG,\qquad \cG^{(0)} = M.\]
The \emph{nerve} of a Lie groupoid is naturally a simplicial manifold that carries a coboundary operator $\partial: \Omega^\bullet(\cG^{(k-1)}) \to \Omega^\bullet(\cG^{(k)})$.
The first two operators are given as below:
\begin{align} 
  \label{eq:gpdcob}
  & \partial: \Omega^\bullet(M) \to \Omega^\bullet(\cG), \qquad \mu \mapsto \alpha^*(\mu) - \beta^*(\mu); \\
  \nonumber
  & \partial: \Omega^\bullet(\cG) \to \Omega^\bullet(\cG^{(2)}), \qquad \mu \mapsto \mathrm{pr}_1^*(\mu) - m^*(\mu) + \mathrm{pr}_2^*(\mu);
\end{align}
where $\mathrm{pr}_1: \cG^{(2)} \to \cG$ and $\mathrm{pr}_2: \cG^{(2)} \to \cG$ are the first and second projections.
A differential form $\mu \in \Omega^\bullet(\cG)$ is called \emph{multiplicative} if $\partial \mu = 0$.
Our main interest will be with Lie groupoids equipped with a multiplicative symplectic structure.
\begin{definition}
  A \emph{symplectic groupoid} is a Lie groupoid $\cG \rra M$ with a multiplicative symplectic structure $\omega \in \Omega^2(\cG)$.
  That is, $\mathrm{pr}_1^*(\omega)+\mathrm{pr}_2^*(\omega) = m^*(\omega)$ or equivalently the graph of the multiplication map $\Gamma_m := \{(g, h, gh) \in \cG \times \cG \times \cG\}$ is Lagrangian with repsect to the symplectic structure $\omega \oplus \omega \oplus -\omega$ on $\cG\times\cG\times\cG$.
\end{definition}

The source fibers of a symplectic groupoid are symplectic orthogonal to the target fibers.
Some important examples of symplectic groupoids include: the Kostant-Kirillov-Souriau Poisson structures \cite{CDW87}, the Drinfeld doubles of Poisson Lie groups \cite{LW89}, the double Bruhat cells \cite{LuM16}, the blow-up groupoids of log-symplectic manifolds \cite{GL14}, and the symplectic doubles of (the positive part of) the cluster $\cX$-varieties \cite{FG09c}.
We note that symplectic groupoids are a special case of Poisson groupoids.
\begin{definition}
  A \emph{Poisson groupoid} is a Lie groupoid $\cG \rra M$ with a multiplicative Poisson structure $\sigma \in \cT^2_\cG$.
  That is, the graph of the multiplication map $\Gamma_m$ is coisotropic with respect to the Poisson structure $\sigma \oplus \sigma \oplus -\sigma$ on $\cG\times\cG\times\cG$.
\end{definition}

A Poisson groupoid map from $(\cG_1, \sigma_1) \rra M_1$ to $(\cG_2, \sigma_2) \rra M_2$ is a Lie groupoid map $\varphi: \cG_1 \to \cG_2$ which is Poisson.
For a Poisson groupoid $(\cG, \sigma) \rra M$, there is a natural Poisson structure $\pi$ on $M$ such that $\alpha: (\cG, \sigma) \to (M, \pi)$ and $\beta: (\cG, \sigma) \to (M, -\pi)$ are Poisson maps.
For a Poisson manifold $(M, \pi)$, one could ask if there is a symplectic groupoid $(\cG, \omega) \rra (M, \pi)$ such that $\alpha: (\cG, \omega) \to (M, \pi)$ and $\beta: (\cG, \omega) \to (M, -\pi)$ are Poisson maps.
If the answer is yes, we say that the Poisson manifold $(M, \pi)$ is \emph{integrable}.
The Lie algeboid of a symplectic groupoid $\cG \rra M$ is the cotangent bundle $T^*M$ \cite{Wei87} with the anchor map
\[\pi^\flat: T^*M \to TM, \qquad \theta \mapsto \iota_\theta \pi,\]
and the Koszul bracket: for $\theta, \psi \in \Omega^1(M)$,
\[[\theta, \psi] = \cL_{\pi(\theta)} \psi - \cL_{\pi(\psi)} \theta - d\pi(\theta \otimes \psi).\]
For a Poisson manifold $(M, \pi)$, we denote its \emph{cotangent Lie algebroid} by $T^*_\pi M$.

Just like the infinitesimal object of a Poisson Lie group is a Lie bialgebra, the infinitesimal object of a Poisson groupoid is a Lie bialgebroid \cite{MX94}.
As a Poisson groupoid, the symplectic groupoid $(\cG, \omega) \rra (M, \pi)$ integrates the Lie bialgebroid $(T^*_\pi M, TM)$.
The integrability of Poisson manifolds, and more generally the integrability of Lie algebroids, is characterized in \cite{CF03, CF04}.

In general, a Poisson map does not induce a Lie algebroid morphism and hence does not integrate to a bona fide symplectic groupoid map.
Instead a Poisson map naturally induces a Lie algebroid comorphism.
Following \cite{CDW13}, we outline the procedure to lift Lie algebroid morphisms and comorphisms to Lie groupoid morphisms and comorphisms.
\begin{definition} 
  \cite{HM90, Mac05, CDW13}
 	Let $A_1$ be a Lie algebroid over $M_1$ and $A_2$ be a Lie algebroid over $M_2$ with anchor maps
 	\[ \rho_1: A_1 \to TM_1, \qquad \rho_2: A_2 \to TM_2. \]
 	\begin{enumerate}
 		\item A Lie algebroid morphism is a bundle map $(\varphi, \Phi): (M_1, A_1) \to (M_2, A_2)$ such that the pullback of sections $\Phi^*: \Gamma(\wedge^\bullet A_2) \to \Gamma(\wedge^\bullet A_1)$ is a chain map of complexes.
 		\item A Lie algebroid comorphism is a bundle comorphism $(\varphi, \Psi)$ with
 			\[ \varphi: M_1 \to M_2,~ \Psi: \varphi^! A_2 = M_1 {\times_{M_2}} A_2 \to A_1 \]
 			such that $\varphi_* \circ \rho_1 \circ \Psi = \rho_2$. 
 	\end{enumerate}
\end{definition}
In principle, the graph of a comorphism $(\varphi, \Psi)$ from $A_1$ to $A_2$ is a subset of $\varphi^! A_2 \times A_1 = \left(M_1 {\times_{M_2}} A_2 \right) \times A_1$, but we will interpret the graph of $\Psi$ as a subset of $A_2 \times A_1$. Hence for a diffeomorphism $\varphi: M_1 \to M_2$, we have that $(\varphi, \Phi)$ is a Lie algebroid morphism if and only if $(\varphi^{-1}, \Phi^\vee)$ is a Lie algebroid comorphism where $\Phi^\vee$ is the dual bundle map of $\Phi$.
\begin{definition} 
  \cite{Mac05, CDW13} 
  \label{def:gpdcomor}
  Let $\cG_1 \rra M_1$ and $\cG_2 \rra M_2$ be Lie groupoids.
  \begin{enumerate}
    \item A Lie groupoid morphism from $\cG_1$ to $\cG_2$ is a map $(\varphi, \Phi): (M_1, \cG_1) \to (M_2, \cG_2)$ that is compatible with groupoid structures. 
    \item A Lie groupoid comorphism from $\cG_1$ to $\cG_2$ is a base map $\varphi: M_1 \to M_2$ together with a map
      \[ \Psi: M_1 {_\varphi \times_\alpha} \cG_2 \to \cG_1 \]
      that is compatible with groupoid structures (for details, see p.5 of \cite{CDW13}). 
  \end{enumerate}
\end{definition}
Note the difference in convention: the source and target maps in Definition~\ref{def:gpdcomor} are switched comparing to the convention in \cite{CDW13}.
As with the Lie algebroids, we will interpret the graph of a Lie groupoid comorphism $\Psi$ from $\cG_1$ to $\cG_2$ as a subset of $\cG_2 \times \cG_1$.

In general, a Lie algebroid morphism from $A_1$ to $A_2$ integrates to a Lie groupoid morphism from $\cG_1$ to $\cG_2$ if $\cG_1$ is source-simply-connected; a Lie algebroid comorphism from $A_1$ to $A_2$ integrates to a Lie groupoid comorphism from $\cG_1$ to $\cG_2$ if $\cG_2$ is source-simply-connected and the algebroid comorphism is complete.
Concretely, the graph of a morphism (or a comorphism) from $A_1$ to $A_2$ is a Lie subalgebroid of $A_1 \times A_2$, and the exponential map $A_1 \times A_2 \to \cG_1 \times \cG_2$ integrates the graph to a Lie subgroupoid of $\cG_1 \times \cG_2$ which happens to be the graph of a morphism (or a comorphism) from $\cG_1$ to $\cG_2$ under the assumptions above.

A Poisson map $\varphi: (M_1, \pi_1) \to (M_2, \pi_2)$ naturally induces the Lie algebroid comorphism $\varphi^*: \varphi^!  T^*_{\pi_2} M_2 \to T^*_{\pi_1} M_1$, which means that the following diagram commutes \cite{HM90}.
\begin{equation*} 
	\xymatrix{
		T^*_{\pi_1} M_1 \ar[d]^{\pi_1^\flat} & \varphi^! T^*_{\pi_2}  M_2 \ar[l]_{\varphi^*} \ar[r]& T^*_{\pi_2} M_2  \ar[d]^{\pi_2^\flat} \\
		T M_1 \ar[rr]^{\varphi_*} & & TM_2
	}
\end{equation*}
This comorphism $\varphi^*$ is complete if for a complete Hamiltonian vector field $X_f$ with $f \in \cO_{M_2}$, the pullback Hamiltonian vector field $X_{\varphi^*f}$ is also complete.
That is, a Poisson map $\varphi: (M_1, \pi_1) \to (M_2, \pi_2)$ lifts to a unique symplectic groupoid comorphism from $(\cG_1, \omega_1)$ to $(\cG_2, \omega_2)$ if $\cG_2$ is source-simply-connected and $\varphi$ is complete.
In fact, the graph of $\varphi$, which is a coisotropic submanifold of $(M_1 \times M_2, \pi_1 \oplus -\pi_2)$, integrates to a Lagrangian subgroupoid of $(\cG_1 \times \cG_2, \omega_1 \oplus -\omega_2)$ \cite{Cat04}.

If the Poisson map $\varphi: (M, \pi) \to (M, \pi)$ is a diffeomorphism, then $\varphi$ induces both a Lie algebroid morphism and a Lie algebroid comorphism, which can be lifted either to a symplectic groupoid morphism or to a symplectic groupoid comorphism. For the periodicity of groupoid mutations Proposition~\ref{prop:gpd periodicity}, we single out the special case when the Poisson map is the identity.

\begin{proposition} \label{prop:gpd identity}
If $\cG \rra M$ is the source-simply-connected symplectic groupoid of $(M, \pi)$, then the identity map $\mathtt{1}_M: M\to M$ as a Poisson diffeomorphism lifts to the identity groupoid map $\mathtt{1}_\cG: \cG \to \cG$.
\end{proposition}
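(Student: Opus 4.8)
The plan is to deduce the statement purely formally from the uniqueness half of the integration correspondence recalled just before the proposition, namely that a Lie algebroid morphism integrates to \emph{at most one} Lie groupoid morphism out of a source-simply-connected Lie groupoid. So the whole proof amounts to exhibiting $\mathtt{1}_\cG$ as \emph{one} such integration and then invoking uniqueness.

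First I would identify the Lie algebroid datum attached to $\mathtt{1}_M$. Since $\mathtt{1}_M$ is a Poisson diffeomorphism, it induces a Lie algebroid morphism on the cotangent Lie algebroid $T^*_\pi M$, whose underlying bundle map is $(\mathtt{1}_M)^{*}$; but $(\mathtt{1}_M)^{*}=\mathrm{id}$ on $\Gamma(\wedge^\bullet T^*_\pi M)$, which is trivially a chain map of complexes, and the square with the anchor $\pi^\flat$ commutes trivially. Hence the Lie algebroid morphism induced by $\mathtt{1}_M$ is exactly the identity morphism $(\mathrm{id},\mathrm{id})$ on $(M, T^*_\pi M)$; the same computation with $(\mathtt{1}_M)^{*}$ read as a comorphism gives the identity Lie algebroid comorphism.

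Second I would check that $\mathtt{1}_\cG\colon \cG\to\cG$ is a symplectic groupoid morphism covering $\mathtt{1}_M$ that differentiates to this identity algebroid morphism. It is obviously a Lie groupoid morphism, and being the identity it preserves the multiplicative symplectic form $\omega$, so it is a symplectic groupoid morphism over $\mathtt{1}_M$; applying the Lie functor \eqref{eq:Liefunctor} to $\mathtt{1}_\cG$ visibly yields the identity endomorphism of $\Lie\cG = T^*_\pi M$, which by the previous paragraph equals the algebroid morphism induced by $\mathtt{1}_M$. Now, because $\cG$ is source-simply-connected, the Lie algebroid morphism induced by $\mathtt{1}_M$ integrates to a unique Lie groupoid morphism $\cG\to\cG$; since $\mathtt{1}_\cG$ is such an integration, the (unique) lift of the Poisson diffeomorphism $\mathtt{1}_M$ is $\mathtt{1}_\cG$. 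The comorphism version is identical, using the identity comorphism from the first step; equivalently, one may observe that the graph of $\mathtt{1}_M$ is the diagonal $\Delta_M\subset (M\times M,\pi\oplus-\pi)$, which integrates to the diagonal $\Delta_\cG\subset(\cG\times\cG,\omega\oplus-\omega)$, i.e.\ the graph of $\mathtt{1}_\cG$.

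The only delicate point is bookkeeping rather than mathematics: one must make sure that the canonical identification $\Lie\cG\cong T^*_\pi M$ used to define the algebroid morphism induced by a Poisson map is the same identification under which differentiating $\mathtt{1}_\cG$ gives the identity, and that the notions of ``lift'' appearing in the discussion (symplectic groupoid morphism over $\mathtt{1}_M$, versus the Lagrangian subgroupoid of $\cG\times\cG$ integrating the coisotropic diagonal of $M\times M$) coincide. Both are immediate from the functoriality of the constructions recalled above, so no genuine obstacle arises.
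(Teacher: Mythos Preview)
Your proof is correct and is precisely the argument the paper intends: the proposition is stated without an explicit proof, as an immediate consequence of the preceding discussion that a Lie algebroid morphism (here the identity on $T^*_\pi M$ induced by the Poisson diffeomorphism $\mathtt{1}_M$) integrates uniquely to a Lie groupoid morphism when the source groupoid is source-simply-connected. You have simply spelled out the two trivial verifications the paper leaves implicit---that $\mathtt{1}_M$ induces the identity algebroid morphism and that $\mathtt{1}_\cG$ differentiates to it---so there is nothing to add.
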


Following \cite{Fer07}, we outline a method to lift the Hamiltonian Poisson maps to Hamiltonian symplectic groupoid morphisms.
In general, Poisson vector fields are lifted to multiplicative Hamiltonian vector fields.
For the Hamiltonian vector field $X_f \in \cT_M$ of $f \in \cO_M$, if $(\cG, \omega) \rra (M, \pi)$ is a symplectic groupoid, then $X_f$ is lifted to the Hamiltonian vector field $X_F \in \cT_\cG$, where $F := \alpha^*f - \beta^*f$.
Indeed, the function $F \in \cO_\cG$ is multiplicative since $F = \partial f$, where $\partial$ is the groupoid coboundary operator \eqref{eq:gpdcob}, and the symplectic form $\omega$ is multiplicative by definition, so the Hamiltonian vector field $X_F = \iota_{dF}\omega^{-1}$ is multiplicative.
It follows that $X_F$ preserves the symplectic groupoid structures of $(\cG,\omega)$; and the time-$t$ flow $\varphi^t_f: (M, \pi) \to (M, \pi)$ of $X_f$ is lifted to the time-$t$ flow $\varphi^t_F: (\cG, \omega) \to (\cG, \omega)$ of $X_F$.

\section{Symplectic groupoids of log-canonical Poisson structures}
\label{sec:local}
We focus in this paper on the symplectic groupoids of log-canonical Poisson structures.
\begin{definition} 
  \label{def: logPoisson}
  Let $L$ be either $\RR^m$ or $\CC^m$ and write $\bfx = (x_1, \ldots, x_m)$ for a system of coordinates on $L$.
  A Poisson structure on $L$ is \emph{log-canonical} if
  $$
    \{x_i, x_j\} = \Omega_{ij} x_ix_j, \quad 1 \leq i,j \leq m \qquad \text{or equivalently} \qquad
    \pi = \sum_{j < i} \Omega_{ij} x_ix_j\frac{\partial}{\partial x_i} \wedge \frac{\partial}{\partial x_j},
  $$
  for some skew-symmetric $m\times m$ matrix $\Omega = (\Omega_{ij})$.
\end{definition}

Using the results in \cite{CM11, CMS17}, we construct the source-simply-connected symplectic groupoid of a log-canonical Poisson structure by chosing an appropriate Poisson spray. 
\begin{definition} 
  \cite{CM11}
  For a Poisson manifold $(M, \pi)$, a \emph{Poisson spray} is a vector field $X \in \cT_{T^*M}$ such that
  \begin{enumerate}
    \item for $(x,p) \in T^*M$ we have
      \[\tau_* X|_{(x,p)} = \pi^\flat(p),\]
      where $\tau: T^*M \to M$ is the bundle projection;
    \item $X$ is homogeneous of degree 1, i.e.
      \[(m_\lambda)_*(X) = \lambda X,\]
      where $m_\lambda: T^*M \to T^*M, \enskip (x,p) \mapsto (x,\lambda p)$ is the fiberwise scaling map.
  \end{enumerate}
\end{definition}

\begin{theorem} 
  \cite{CM11, CMS17} 
  \label{thm:poissp}
  For a smooth Poisson manifold $(M, \pi)$ with a Poisson spray $X \in \cT_{T^*M}$, there exists a neighborhood $U$ of the zero section of $T^*M$ which is a local symplectic groupoid over $(M, \pi)$ with the following structures:
  \begin{enumerate}
    \item the source map $\alpha: U \to M$ is the bundle projection;
    \item the target map is
      \[\beta: U \to M, \qquad \beta = \tau \circ \varphi_X^1,\]
      where $\varphi_X^t: T^*M \to T^*M$ denotes the time-$t$ flow of $X$;
    \item the identity map $\mathtt{1}: M \to U$ is the zero section;
    \item the inverse map is
      \[\iota: U \to U, \qquad \iota = -\varphi_X^1;\]
    \item the multiplication $m: U {_\alpha \times_\beta} U \to U$ is defined as the solution of an ODE (see \cite{CMS17} for details);
    \item the symplectic form on $U$ is
      \[\omega = \int_{0}^{1} (\varphi_X^t)^*\omega_0 dt,\]
      where $\omega_0$ is the standard symplectic structure on $T^*M$.
  \end{enumerate}
\end{theorem}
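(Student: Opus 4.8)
I would follow the spray construction of Crainic and M\u{a}rcu\c{t} and verify the groupoid axioms directly, using the homogeneity of $X$ as the organizing principle. First note that $X$ vanishes along the zero section: in local coordinates $(x^i,p_i)$ on $T^*M$, the spray condition $\tau_*X|_{(x,p)}=\pi^\flat(p)$ forces the horizontal component of $X$ to be linear in $p$, while the homogeneity condition forces its vertical component to be quadratic in $p$, so $X|_{\mathtt{1}_M}=0$. Hence the zero section consists of fixed points of $\varphi_X^t$, which is therefore defined on an open neighborhood $U_0$ of $\mathtt{1}_M$ for all $t\in[0,1]$; a short linearization at the fixed points shows $d\varphi_X^t|_{(x,0)}$ acts on $T_xM\oplus T^*_xM$ by $(v,\theta)\mapsto(v+t\,\pi^\flat_x\theta,\,\theta)$. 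Integrating the homogeneity at $\lambda=-1$ gives the flow identity $\varphi_X^t\circ m_{-1}=m_{-1}\circ\varphi_X^{-t}$, which I will use repeatedly. With $\alpha=\tau$, $\beta=\tau\circ\varphi_X^1$, the unit equal to the zero section, and the inversion $\iota=-\varphi_X^1$, the relations not involving multiplication are immediate: $\alpha$ is the bundle projection and $\beta$ restricts to $\mathrm{id}_M$ along the zero section, so both are submersions near $\mathtt{1}_M$; since $\tau$ is fibrewise linear, $\alpha\circ\iota=\tau\circ(-\varphi_X^1)=\beta$; and the flow identity at $\lambda=-1$ gives $\varphi_X^1\bigl(-\varphi_X^1(g)\bigr)=-g$, so $\iota$ is an involution fixing the zero section and $\beta\circ\iota=\alpha$.

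Next I would check that $\omega:=\int_0^1(\varphi_X^t)^*\omega_0\,dt$ is symplectic near $\mathtt{1}_M$. It is closed, being an average of the closed forms $(\varphi_X^t)^*\omega_0$, and for nondegeneracy one works along the zero section: since $\varphi_X^t$ fixes $(x,0)$, the form $(\varphi_X^t)^*\omega_0|_{(x,0)}$ depends only on the linearization above, and a short computation shows it equals $\omega_0|_{(x,0)}$ perturbed only by a $2$-form in the fibre directions; hence so does $\omega|_{(x,0)}$, which is therefore nondegenerate. Thus $\omega$ is symplectic on a neighborhood $U\subseteq U_0$, which we now fix, shrinking it further so that $\alpha,\beta$ are submersions and the source fibres $\alpha^{-1}(x)=T^*_xM\cap U$ are balls, hence connected and simply connected. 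The inversion is moreover anti-symplectic: using the elementary fact $(m_\lambda)^*\omega_0=\lambda\omega_0$ together with the flow identity, one computes
\[
  (m_{-1})^*\omega=-\int_0^1(\varphi_X^{-t})^*\omega_0\,dt,\qquad \iota^*\omega=(\varphi_X^1)^*(m_{-1})^*\omega=-\int_0^1(\varphi_X^{1-t})^*\omega_0\,dt=-\omega.
\]

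The analytically substantive input is that $(U,\omega,\alpha)$ is a symplectic realization, i.e.\ $\alpha\colon(U,\omega)\to(M,\pi)$ is Poisson and $\beta\colon(U,\omega)\to(M,-\pi)$ is Poisson; this is the theorem of \cite{CM11}. Its proof combines the spray condition $\tau_*X=\pi^\flat$ with the identity $\tfrac{d}{dt}(\varphi_X^t)^*\omega_0=(\varphi_X^t)^*(\cL_X\omega_0)=d\bigl[(\varphi_X^t)^*\iota_X\omega_0\bigr]$ (Cartan's formula, using $d\omega_0=0$) to show that for $f\in\cO_M$ the $\omega$-Hamiltonian vector field of $\alpha^*f$ is $\alpha$-related to $\pi^\flat(df)$; the claim for $\beta=\alpha\circ\iota$ then follows from $\iota^*\omega=-\omega$. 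In particular the distribution $(\ker d\alpha)^{\perp_\omega}$ is integrable --- it is spanned by the Hamiltonian vector fields $X_{\alpha^*f}$, $f\in\cO_M$, which close under the bracket since $\alpha$ is Poisson --- and one checks, and here the spray structure is used decisively, that near $\mathtt{1}_M$ its leaves are exactly the fibres of $\beta=\tau\circ\varphi_X^1$; indeed along the zero section the linearization above gives directly $\ker d\beta|_{(x,0)}=(\ker d\alpha)^{\perp_\omega}|_{(x,0)}=\{(-\pi^\flat_x\theta,\theta):\theta\in T^*_xM\}$.

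It then remains to produce the multiplication $m\colon U {_\alpha \times_\beta} U\to U$ and to verify the unit and inverse identities $\mathtt{1}_{\beta(g)}g=g=g\,\mathtt{1}_{\alpha(g)}$, $g^{-1}g=\mathtt{1}_{\alpha(g)}$, $gg^{-1}=\mathtt{1}_{\beta(g)}$, associativity, and multiplicativity of $\omega$; this is the step I expect to be the main obstacle. By Coste--Dazord--Weinstein \cite{CDW87}, a symplectic realization with simply connected source fibres whose orthogonal foliation is simple carries a unique compatible local symplectic groupoid structure; the previous paragraph identifies that foliation as the one cut out by $\beta$, so $m$ is determined, and --- as carried out in \cite{CMS17} --- it is computed explicitly as the time-$1$ value of an ODE deforming the fibrewise addition on $T^*M$. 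Multiplicativity $m^*\omega=\mathrm{pr}_1^*\omega+\mathrm{pr}_2^*\omega$ is built into the characterization of $\Gamma_m$ as a Lagrangian submanifold of $(U\times U\times U,\,\omega\oplus\omega\oplus-\omega)$; the unit and inverse identities are verified by linearizing along $\mathtt{1}_M$ and propagating by the ODE; and associativity follows from uniqueness of solutions, since $g(hk)$ and $(gh)k$ solve the same ODE wherever both are defined. A final shrinking of $U$, so that all partially defined operations make sense, completes the construction.
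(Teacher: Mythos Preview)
The paper does not prove this theorem at all: it is stated as a cited result from \cite{CM11, CMS17} and used as a black box in the subsequent construction of the groupoid $\cG$ for log-canonical Poisson structures. Your proof sketch is therefore not comparable to anything in the paper itself; rather, it is a reasonable outline of the original arguments of Crainic--M\u{a}rcu\c{t} (for the symplectic realization $\omega$ and the Poisson property of $\alpha$) and Cabrera--M\u{a}rcu\c{t}--Salazar (for the ODE description of the multiplication), together with the Coste--Dazord--Weinstein mechanism for extracting a local groupoid from a symplectic realization with simply connected source fibres. As a summary of those sources your sketch is broadly accurate, and the organizing identities you isolate (the homogeneity relation $\varphi_X^t\circ m_{-1}=m_{-1}\circ\varphi_X^{-t}$, the linearization along the zero section, and the computation $\iota^*\omega=-\omega$) are the right ones.
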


\begin{remark}
  For the standard symplectic structure $\omega_0$ in Theorem~\ref{thm:poissp}, we use the sign convention that $\omega_0 = - d \theta_0$ for $\theta_0$ the tautological 1-form on $T^*M$.
  This choice ensures that the source map $\alpha$ is Poisson.
  In coordinates, we have $\omega_0 = \sum_i d x_i \wedge d p_i$.
\end{remark}

\begin{remark}
  By a local symplectic groupoid $(\cG, \omega) \rra (M, \pi)$, we mean that the multiplication $m: \cG {_\alpha \times_\beta} \cG \to \cG$ may not be defined on all of its domain.
  In general, the local symplectic groupoid structure cannot be extended to the total space $T^*M$.
  Indeed: the Poisson spray $X$ may not be complete; the flow of the Poisson spray $X$ may contain loops; or the 2-form $\omega$, though non-degenerate near the zero section of $T^*M$, may be degenerate globally.
\end{remark}

In the next results, we introduce a Poisson spray whose local symplectic groupoid provides an integration of a log-canonical Poisson structure.
\begin{lemma} 
  \label{lemma:PoisSp}
  For the log-canonical Poisson structure $\{x_i, x_j\} = \Omega_{ij} x_i x_j$ on $L = \RR^m$, the vector field $X \in \cT_{T^*L}$ given in coordinates $(\bfx, \bfp) = (x_1, \ldots, x_m, p_1, \ldots, p_m)$ on $T^*L$ by
  \begin{equation} 
    \label{eq: PoisSp}
    X = \sum_{1 \leq i,j \leq m}\Omega_{ij}x_i p_i x_j\frac{\partial}{\partial x_j} - \sum_{1 \leq i,j \leq m}\Omega_{ij}x_ip_i p_j\frac{\partial}{\partial p_j}
  \end{equation}
  is a Poisson spray.
  Its flow is given by
  \[\varphi_X^t: T^*L \to T^*L, \qquad (\bfx, \bfp) \mapsto (\bfa^t \circ \bfx, \bfa^{-t} \circ \bfp),\]
  where $a_j = e^{\sum_i \Omega_{ij} x_ip_i}$.
  This flow exists for all time $t \in \RR$ and contains no loops.
\end{lemma}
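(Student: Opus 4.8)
The plan is to check the two spray axioms by a direct coordinate computation, to identify the flow by exhibiting a complete family of first integrals of $X$, and then to read off completeness and the absence of loops from the resulting closed formula; throughout write $c_j := \sum_i \Omega_{ij} x_i p_i$, so that $a_j = e^{c_j}$. For the first axiom, $\tau_* X$ is the base component $\sum_j c_j x_j\, \partial/\partial x_j$ of $X$, and splitting $\sum_{i,j}\Omega_{ij} x_i p_i x_j\, \partial/\partial x_j$ into its $i<j$ and $i>j$ parts and relabeling the latter with $\Omega_{ij}=-\Omega_{ji}$ rewrites it as $\sum_{i<j}\Omega_{ij} x_i x_j\,(p_i\, \partial/\partial x_j - p_j\, \partial/\partial x_i)$; the same expression results from contracting $\pi = \sum_{j<i}\Omega_{ij} x_i x_j\, \partial/\partial x_i \wedge \partial/\partial x_j$ with the covector $\bfp = \sum_k p_k\, dx_k$, so $\tau_* X|_{(\bfx,\bfp)} = \pi^\flat(\bfp)$. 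For the homogeneity axiom, let $\mathcal{E} = \sum_j p_j\, \partial/\partial p_j$ be the Euler vector field along the fibers of $T^*L$; the coefficient of $\partial/\partial x_j$ in $X$ is homogeneous of degree one in $\bfp$ and that of $\partial/\partial p_j$ of degree two, so from $[\mathcal{E}, \partial/\partial x_j] = 0$ and $[\mathcal{E}, \partial/\partial p_j] = -\partial/\partial p_j$ one gets $[\mathcal{E}, X] = X$, which is the fiberwise degree-one homogeneity of a Poisson spray.

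The computation of the flow rests on one observation: $X = \sum_j c_j\,\bigl(x_j\, \partial/\partial x_j - p_j\, \partial/\partial p_j\bigr)$, and each $c_j$ is a first integral of $X$. Indeed $x_j\, \partial/\partial x_j - p_j\, \partial/\partial p_j$ annihilates every product $x_i p_i$ --- for $i \neq j$ because $x_i p_i$ involves neither $x_j$ nor $p_j$, and for $i = j$ because $x_j\, \partial/\partial x_j$ and $p_j\, \partial/\partial p_j$ both carry $x_j p_j$ to $x_j p_j$ --- so $X(c_j) = 0$ for all $j$. Consequently, along any integral curve of $X$ the numbers $c_j$ are constant, so on that curve $X$ restricts to the constant-coefficient linear vector field $\sum_j c_j\,(x_j\, \partial/\partial x_j - p_j\, \partial/\partial p_j)$, whose flow is $(\bfx,\bfp) \mapsto \bigl((e^{t c_j} x_j)_j,\, (e^{-t c_j} p_j)_j\bigr) = (\bfa^t \circ \bfx,\, \bfa^{-t}\circ \bfp)$; equivalently one checks directly that $\gamma(t) := (\bfa^t \circ \bfx,\, \bfa^{-t}\circ \bfp)$ satisfies $\gamma(0) = (\bfx,\bfp)$ and $\dot\gamma(t) = X(\gamma(t))$ term by term, using that $(\bfa^t\circ\bfx)_i\,(\bfa^{-t}\circ\bfp)_i = x_i p_i$ is independent of $t$ so that the vector $\bfa$ occurring in the formula is genuinely constant along $\gamma$, and then invokes uniqueness of integral curves.

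Finally, since $a_j = e^{c_j} > 0$ the expressions $a_j^t = e^{t c_j}$ make sense for all $t \in \RR$ and $t \mapsto (\bfa^t\circ\bfx,\, \bfa^{-t}\circ\bfp)$ is a one-parameter group, so the flow is complete; and if $\varphi_X^t(\bfx,\bfp) = (\bfx,\bfp)$ for some $t \neq 0$, then for every index $j$ with $(x_j,p_j) \neq (0,0)$ the $x_j$- or $p_j$-coordinate forces $a_j^t = 1$, hence $a_j = 1$ and $c_j = 0$ because $a_j > 0$ and $t \neq 0$, whence $\varphi_X^s(\bfx,\bfp) = (\bfx,\bfp)$ for all $s$ --- so the only periodic orbits are the fixed points and the flow has no loops. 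I expect no real obstacle here: the statement is a direct verification, and the only step worth isolating is recognizing the decomposition $X = \sum_j c_j\,(x_j\, \partial/\partial x_j - p_j\, \partial/\partial p_j)$ together with the first-integral property of the $c_j$, after which the spray property, the exponential form of the flow, its completeness, and the loop-freeness all follow at once.
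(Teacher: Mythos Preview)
Your proof is correct and follows essentially the same approach as the paper: both verify $\tau_* X = \pi^\flat(\bfp)$ directly and both find the flow by observing that each $x_i p_i$ (hence each $c_j$) is conserved, reducing the system to a decoupled linear ODE with exponential solution. You are simply more thorough---the paper omits the verification of the fiberwise homogeneity axiom and does not spell out the ``no loops'' and completeness arguments, whereas you supply both (via $[\mathcal{E},X]=X$ and the positivity of $a_j$ respectively).
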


\begin{proof}
  For the co-vector $\theta = p_1 dx_1 + \ldots + p_m dx_m$ and the point $(\bfx, \bfp)$, we have
  \[\tau_* X|_{(\bfx,\bfp)} = \sum_{i, j}\Omega_{ij} x_i p_i x_j \frac{\partial}{\partial x_j} = \iota_\theta \pi.\]
  To find the flow of $X$, we note that $x_i p_i$ is a constant under the flow of $X$:
  \[\frac{d}{dt}(x_j p_j) = \dot{x}_j p_j + x_j \dot{p}_j = \sum_{1 \leq i \leq m}\Omega_{ij}x_i p_i x_j p_j - \sum_{1 \leq i \leq m}\Omega_{ij}x_ip_i p_j x_j = 0.\]
  Therefore $\sum_{i}\Omega_{ij}x_i p_i$ is constant, and
  \begin{align*}
    x_j(t) & = e^{t \sum_i \Omega_{ij} x_ip_i} x_j(0) = a_j^t x_j(0), \\
    p_j(t) & = e^{-t \sum_i \Omega_{ij} x_ip_i} p_j(0) = a_j^{-t} p_j(0).
  \end{align*}
\end{proof}

This Poisson spray $X$ induces the symplectic groupoid structure below.
\begin{theorem} 
  \label{thm:PoiSpLogC}
  For the log-canonical Poisson structure $\{x_i, x_j\} = \Omega_{ij} x_i x_j$ on $L$ (which is either $\RR^m$ or $\CC^m$), there is a source-simply-connected symplectic groupoid $(\cG, \omega_\cG) \rra (L, \pi)$ with the following structures:
  \begin{enumerate}
    \item $\cG \cong T^*L$ has the coordinates $(\bfx, \bfp) = (x_1, \ldots, x_m, p_1, \ldots, p_m)$;
    \item the source map is the bundle projection $\alpha: T^*L \to L, \quad (\bfx, \bfp) \mapsto \bfx$;
    \item the target map is $\beta: T^*L \to L, \quad (\bfx, \bfp) \mapsto \bfa \circ \bfx$, where $a_j = e^{\sum_i \Omega_{ij} x_ip_i}$;
    \item the identity map is $\mathtt{1}: L \to T^*L, \quad \bfx \mapsto (\bfx, 0)$;
    \item the inverse map is $\iota: T^*L \to T^*L, \quad (\bfx, \bfp) \mapsto (\bfa \circ \bfx, -\bfa^{-1} \circ \bfp)$;
    \item the multiplication map is $m: T^*L {_\alpha \times_\beta} T^*L \to T^*L, \quad \big((\bfa \circ \bfx, \bfp'), (\bfx, \bfp)\big) \mapsto (\bfx, \bfa \circ \bfp' + \bfp)$;
    \item the multiplicative symplectic form $\omega$ is
      \begin{equation*}
	\omega_\cG = \sum_{j} dx_j \wedge dp_j + \sum_{i, j} \Omega_{ij}p_ix_j dx_i \wedge dp_j + \sum_{j < i} \Omega_{ij}p_ip_j dx_i \wedge dx_j + \sum_{j < i} \Omega_{ij}x_ix_j dp_i \wedge dp_j, 
      \end{equation*}
      and equivalently the multiplicative Poisson bi-vector
      \begin{equation*}
      	\sigma_\cG = \omega_\cG^{-1} = -\sum_{i} \frac{\partial}{\partial x_i} \wedge \frac{\partial}{\partial p_i} - \sum_{i, j} \Omega_{ij} x_i p_j \frac{\partial}{\partial x_i} \wedge \frac{\partial}{\partial p_j} +\sum_{j < i} \Omega_{ij}p_ip_j \frac{\partial}{\partial p_i} \wedge \frac{\partial}{\partial p_j} +\sum_{j < i} \Omega_{ij}x_ix_j \frac{\partial}{\partial x_i} \wedge \frac{\partial}{\partial x_j}.
      \end{equation*}
  \end{enumerate}
\end{theorem}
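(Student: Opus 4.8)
The plan is to feed the Poisson spray $X$ of Lemma~\ref{lemma:PoisSp} into Theorem~\ref{thm:poissp} and then promote the resulting \emph{local} symplectic groupoid to a global one on all of $T^*L$. Theorem~\ref{thm:poissp} already hands us the source $\alpha(\bfx,\bfp)=\bfx$, the identity as the zero section, the target $\beta=\tau\circ\varphi^1_X$, and the inverse $\iota=-\varphi^1_X$ (fiberwise negation followed by the time-$1$ flow); substituting the explicit flow $\varphi^t_X(\bfx,\bfp)=(\bfa^t\circ\bfx,\bfa^{-t}\circ\bfp)$ with $a_j=e^{\sum_i\Omega_{ij}x_ip_i}$ from Lemma~\ref{lemma:PoisSp} turns these into formulas (2)--(5) verbatim. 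Since that lemma also tells us the flow is complete and loop-free, $\beta$ and $\iota$ are globally defined smooth (holomorphic) maps on $T^*L$, so the data (1)--(5) make sense on the whole cotangent bundle. For the symplectic form I would evaluate $\omega_\cG=\int_0^1(\varphi^t_X)^*\omega_0\,dt$ using that the functions $q_i:=x_ip_i$ are constant along the flow (proof of Lemma~\ref{lemma:PoisSp}): one gets $(\varphi^t_X)^*\omega_0=\omega_0+t\sum_{i,j}\Omega_{ij}\,dq_i\wedge dq_j$, and integrating in $t$ and expanding $dq_i=p_i\,dx_i+x_i\,dp_i$ collects precisely the displayed $\omega_\cG$.

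Next I would verify directly that the data (1)--(6), with $m\big((\bfa\circ\bfx,\bfp'),(\bfx,\bfp)\big)=(\bfx,\bfa\circ\bfp'+\bfp)$, satisfies the axioms of Definition~\ref{def:groupoid}: associativity, the relations $g^{-1}g=\mathtt 1_{\alpha(g)}$, $gg^{-1}=\mathtt 1_{\beta(g)}$, and the compatibilities $\alpha(gh)=\alpha(h)$, $\beta(gh)=\beta(g)$ each reduce to a short computation using only that $\bfa=(e^{\sum_i\Omega_{ij}x_ip_i})_j$ depends multiplicatively on $\bfx\circ\bfp$ and that the Hadamard product is commutative; smoothness (holomorphy) is immediate as every map is a composite of polynomials and exponentials. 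Multiplicativity $m^*\omega_\cG=\mathrm{pr}_1^*\omega_\cG+\mathrm{pr}_2^*\omega_\cG$ can be checked by a longer but routine computation; in the real case it also follows because it holds near the zero section by Theorem~\ref{thm:poissp}, the stated $m$ solves the multiplication ODE there, and $\partial\omega_\cG$ is real-analytic on the connected manifold $\cG^{(2)}\cong\RR^{3m}$ (resp.\ $\CC^{3m}$). The source fibers $\alpha^{-1}(\bfx)\cong\RR^m$ (resp.\ $\CC^m$) are contractible, so $\cG$ is source-simply-connected; by the sign convention of Theorem~\ref{thm:poissp} (cf.\ the following remark) $\alpha$ is Poisson, so the base Poisson structure is $\pi$ and one may equivalently check $\alpha_*\sigma_\cG=\pi$ directly. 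Hence $\cG\rra L$ is a source-simply-connected symplectic groupoid over $(L,\pi)$, and so \emph{the} one by uniqueness of source-simply-connected integrations of $T^*_\pi L$. The complex case is handled by the identical holomorphic formulas and the same direct verifications.

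The step I expect to require the most care is \emph{global} non-degeneracy of $\omega_\cG$: Theorem~\ref{thm:poissp} guarantees it only near the zero section, and, as the remark after that theorem warns, such integrated forms can degenerate away from the identity. I would argue as follows. Write $\omega_\cG=\omega_0+\bfq^*\eta_0$, where $\bfq=(q_1,\dots,q_m)\colon T^*L\to\RR^m$ and $\eta_0=\tfrac12\sum_{i,j}\Omega_{ij}\,dq_i\wedge dq_j$, and let $J$ be the endomorphism of $T(T^*L)$ determined by $\omega_\cG(u,v)=\omega_0\big((\mathrm{id}+J)u,v\big)$, so that $Ju=\pi_0^\flat(\iota_u\bfq^*\eta_0)$ with $\pi_0:=\omega_0^{-1}$. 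For every $u$ the one-form $\iota_u\bfq^*\eta_0$ lies in the span of $dq_1,\dots,dq_m$, hence $Ju$ lies in the span of the vector fields $V_i:=\pi_0^\flat(dq_i)=X_{q_i}$; since the $q_i$ Poisson-commute for $\omega_0$ we have $\bfq_*V_i=0$, so $\mathrm{im}\,J\subseteq\ker\bfq_*$ and therefore $\iota_{Ju}\bfq^*\eta_0=0$, i.e.\ $J^2=0$. Thus $\mathrm{id}+J$ is invertible with inverse $\mathrm{id}-J$, so $\omega_\cG$ is symplectic on all of $T^*L$, and $\sigma_\cG=\omega_\cG^{-1}$ is read off from $\sigma_\cG^\flat=(\mathrm{id}-J)\circ\pi_0^\flat$, which expands to the displayed bi-vector.
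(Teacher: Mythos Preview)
Your proof is correct and follows the paper's overall route: feed the Poisson spray of Lemma~\ref{lemma:PoisSp} into Theorem~\ref{thm:poissp}, compute $\omega_\cG=\int_0^1(\varphi_X^t)^*\omega_0\,dt$, and then verify the global groupoid axioms by hand (the paper's ``it is straightforward to check'' is exactly your paragraph of bookkeeping). Your compact form $(\varphi_X^t)^*\omega_0=\omega_0+t\sum_{i,j}\Omega_{ij}\,dq_i\wedge dq_j$ is the same expansion the paper does, just organized around the flow invariants $q_i=x_ip_i$. The one genuinely different step is non-degeneracy of $\omega_\cG$: the paper simply computes the top power $(\omega_\cG)^m=m!\bigwedge_j dx_j\wedge dp_j$ and notes it is a volume form, then writes out $\omega_\cG^\sharp$ and $\sigma_\cG^\flat$ separately and leaves the inversion to the reader. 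Your $J^2=0$ argument---writing $\omega_\cG=\omega_0+\bfq^*\eta_0$ and using that the $q_i$ Poisson-commute for $\omega_0$ so that $\mathrm{im}\,J\subseteq\ker\bfq_*$---is more conceptual and yields $\sigma_\cG^\flat=(\mathrm{id}-J)\circ\pi_0^\flat$ for free. The paper's approach is a one-line computation; yours explains \emph{why} it works and would apply to any perturbation of $\omega_0$ by the pullback of a $2$-form under a map whose components Poisson-commute.
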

\begin{proof}
  When the underlying field is $\RR$, it is straightforward to check that the Poisson spray \eqref{eq: PoisSp} induces the given groupoid structures. To find the sympletic structure $\omega_\cG$, we have
  \begin{align*}
    \left(\varphi_X^t\right)^*\omega_0 & = \sum_{j} d \left(e^{t \sum_{i} \Omega_{ij} x_ip_i}x_j \right) \wedge d \left(e^{-t \sum_{i} \Omega_{ij} x_ip_i}p_j \right) \\
    & = \sum_{j} \left(d x_j + t \sum_{i} \Omega_{ij} x_i x_j dp_i + t \sum_{i} \Omega_{ij} p_i x_j dx_i \right) \wedge \left(d p_j - t \sum_{i} \Omega_{ij} x_i p_j dp_i - t \sum_{i} \Omega_{ij} p_i p_j dx_i \right) \\
    & = \sum_{j} dx_j \wedge dp_j + 2t \left( \sum_{i, j} \Omega_{ij}p_i x_j d x_i \wedge d p_j  + \sum_{j < i} \Omega_{ij}p_ip_j d x_i \wedge d x_j + \sum_{j < i} \Omega_{ij}x_ix_j d p_i \wedge d p_j \right),
  \end{align*}
  so it follows that
  \begin{align*}
    \omega_\cG & = \int_{0}^{1} (\varphi_X^t)^*\omega_0 dt \\
    & = \sum_{j} dx_j \wedge dp_j
      + \left(
	\sum_{i, j} \Omega_{ij}p_ix_j dx_i \wedge dp_j 
	+ \sum_{j < i} \Omega_{ij}p_ip_j dx_i \wedge dx_j
	+ \sum_{j < i} \Omega_{ij}x_ix_j dp_i \wedge dp_j
      \right).
  \end{align*}
  Note that $\omega_\cG$ is non-degenerate since
  \[(\omega_\cG)^m = m! \bigwedge\limits_{1\leq j\leq m} dx_j \wedge dp_j\]
  is a volume form.

  Choosing the standard frames: $\{dx_i, dp_i\}_i$ for $T^*\cG$ and $\left\{\frac{\partial}{\partial x_i}, \frac{\partial}{\partial p_i}\right\}_i$ for $T\cG$, the bundle map $\sigma_\cG^\flat: T^*\cG \to T\cG$, $\theta \mapsto \iota_\theta \pi_\cG$ is given by
  \begin{align*}
    \sigma_\cG^\flat: &~ dx_i \mapsto -\frac{\partial}{\partial p_i} - \sum_{j} \Omega_{ij} x_i p_j \frac{\partial}{\partial p_j}+ \sum_{j} \Omega_{ij} x_i x_j \frac{\partial}{\partial x_j}, \\
    &~ dp_i \mapsto \frac{\partial}{\partial x_i} - \sum_{j} \Omega_{ij} p_i x_j \frac{\partial}{\partial x_j}+ \sum_{j} \Omega_{ij} p_i p_j \frac{\partial}{\partial p_j},
  \end{align*}
  and the bundle map $\omega_\cG^\sharp: T\cG \to T^*\cG$, $v \mapsto \iota_v \omega_\cG$ is given by
  \begin{align*}
    \omega_\cG^\sharp: &~ \frac{\partial}{\partial x_i} \mapsto dp_i + \sum_{j} \Omega_{ij} p_i x_j dp_j + \sum_{j} \Omega_{ij} p_i p_j dx_j, \\
    &~ \frac{\partial}{\partial p_i} \mapsto -dx_i + \sum_{j} \Omega_{ij} x_i p_j dx_j + \sum_{j} \Omega_{ij} x_i x_j dp_j.
  \end{align*}
  We leave it to the reader to check that $\sigma_\cG^\flat$ and $\omega_\cG^\sharp$ are inverse to each other.

  In the case when the underlying field is $\CC$, the symplectic groupoid structures can be verified directly.
\end{proof}

\begin{corollary} \label{cor:exp}
  For the log-canonical Poisson structure $\{x_i, x_j\} = \Omega_{ij} x_i x_j$ on $L$, the exponential map
  \[\exp: T^*_\pi L \to \cG, \qquad (\bfx, \bfp) \mapsto \left(\bfa \circ \bfx, \bfa^{-1} \circ \bfp\right),\]
  where $a_j = e^{\sum_i \Omega_{ij} x_ip_i}$, is a diffeomorphism.
\end{corollary}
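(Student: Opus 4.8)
The plan is to recognize the map $\exp$ as nothing other than the time-$1$ flow of the Poisson spray $X$ from Lemma~\ref{lemma:PoisSp} — this is exactly how the exponential map of a groupoid built from a Poisson spray arises in \cite{CM11, CMS17} — and then to quote the completeness of that flow.

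First I would note that, by Lemma~\ref{lemma:PoisSp}, the time-$t$ flow of $X$ is $\varphi_X^t(\bfx, \bfp) = (\bfa^t \circ \bfx,\, \bfa^{-t}\circ \bfp)$ with $a_j = e^{\sum_i \Omega_{ij} x_i p_i}$; setting $t = 1$ recovers precisely the closed-form map defining $\exp$. Since Lemma~\ref{lemma:PoisSp} guarantees that this flow exists for all $t \in \RR$, the vector field $X$ is complete, so $\{\varphi_X^t\}_{t \in \RR}$ is a one-parameter group of diffeomorphisms of $T^*L$; in particular $\exp = \varphi_X^1$ is a diffeomorphism with inverse $\varphi_X^{-1}$. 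In the holomorphic setting the same argument applies once one observes, directly from \eqref{eq: PoisSp}, that $X$ is a holomorphic vector field, so that $\varphi_X^1$ is a biholomorphism.

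For a self-contained argument avoiding the flow I would instead exhibit the inverse by hand. The relevant fact is already contained in the proof of Lemma~\ref{lemma:PoisSp}: each product $x_j p_j$ is preserved by $\exp$, since if $(\bfy, \bfq) = \exp(\bfx, \bfp)$ then $y_j q_j = (a_j x_j)(a_j^{-1} p_j) = x_j p_j$. Hence $\sum_i \Omega_{ij} y_i q_i = \sum_i \Omega_{ij} x_i p_i$, so $a_j$ is computed equally well from the $(\bfy, \bfq)$-coordinates, and the assignment $(\bfy, \bfq) \mapsto (\bfa^{-1}\circ \bfy,\, \bfa \circ \bfq)$ with $a_j = e^{\sum_i \Omega_{ij} y_i q_i}$ is a two-sided inverse of $\exp$. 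Both $\exp$ and this inverse are manifestly smooth (resp.\ holomorphic), which gives the claim.

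I do not anticipate a genuine obstacle: the only point requiring care is the bookkeeping identification of the given map with $\varphi_X^1$, together with the observation that completeness of $X$ — part of the content of Lemma~\ref{lemma:PoisSp}, alongside the absence of loops that also underlies the passage from the local groupoid of Theorem~\ref{thm:poissp} to the global groupoid of Theorem~\ref{thm:PoiSpLogC} — is exactly what upgrades ``$\exp$ is smooth'' to ``$\exp$ is a diffeomorphism.''
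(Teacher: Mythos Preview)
Your proposal is correct. The paper states this result as a corollary without proof, but your identification of $\exp$ with the time-$1$ flow $\varphi_X^1$ of the complete Poisson spray from Lemma~\ref{lemma:PoisSp} is precisely the intended reasoning, and your alternative explicit-inverse argument (via the invariance of $x_j p_j$) is a valid self-contained substitute.
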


Motivated by \cite{FG09c}, we make the following definition of the symplectic double, which is a source-connected Poisson groupoid of the log-canonical Poisson structure on $L$.
\begin{theorem} \label{thm:SymDBLoc}
  \cite{FG09c}
  For the log-canonical Poisson structure $\{x_i, x_j\} = \Omega_{ij} x_i x_j$ on $L$ (which is either $\RR^m$ or $\CC^m$), the \emph{symplectic double} $(\cD, \sigma_\cD) \rra (L, \pi)$ is a source-connected Poisson groupoid with the following structures:
  \begin{enumerate}
    \item $\cD \cong L \times L^\times$ has the coordinates $(\bfx, \bfs) = (x_1, \ldots, x_m, s_1, \ldots, s_m)$ (where $L^\times$ is respectively either $\RR_\times^m$ or $\CC_\times^m$);
    \item the source map is
      \[\alpha: L \times L^\times \to L, \qquad (\bfx, \bfs) \mapsto \bfx;\]
    \item the target map is
      \[\beta: L \times L^\times \to L, \qquad (\bfx, \bfs) \mapsto \left(x_1 \prod_{i=1}^m s_i^{\Omega_{i1}}, \ldots, x_m \prod_{i=1}^m s_i^{\Omega_{im}}\right);\]
    \item the identity map is $\mathtt{1}: L \to L \times L^\times, \quad \bfx \mapsto (\bfx, 1, \ldots, 1)$;
    \item the inverse map is
      \[\iota: L \times L^\times \to L \times L^\times, \qquad (\bfx,\bfs) \mapsto \left(x_1\prod_{i=1}^m s_i^{\Omega_{i1}}, \ldots, x_m\prod_{i=1}^m s_i^{\Omega_{im}}, \frac{1}{s_1}, \ldots, \frac{1}{s_m}\right);\]
    \item the multiplication map is
      \[\begin{aligned}
	  m: & \left(L \times L^\times\right) {_\alpha \times_\beta} \left(L \times L^\times\right) \to L \times L^\times, \\
	     & \left(\left(x_1 \prod_{i=1}^m s_i^{\Omega_{i1}}, \ldots, x_m \prod_{i=1}^m s_i^{\Omega_{im}}, \bfs'\right), (\bfx, \bfs)\right) \mapsto (\bfx, \bfs' \circ \bfs);
        \end{aligned}\]
    \item the multiplicative Poisson structure is
    \begin{equation*}
      \sigma_\cD = -\sum_{i} x_i s_i \frac{\partial}{\partial x_i} \wedge \frac{\partial}{\partial s_i}+\sum_{j < i} \Omega_{ij}x_ix_j \frac{\partial}{\partial x_i} \wedge \frac{\partial}{\partial x_j}
    \end{equation*}
    and equivalently the singular multiplicative 2-form $\omega_\cD$ is
    \begin{equation*} 
      \omega_\cD = \sigma_\cD^{-1} = \sum_{i} \frac{dx_i}{x_i} \wedge \frac{ds_i}{s_i} + \sum_{j < i} \Omega_{ij} \frac{ds_i}{s_i} \wedge \frac{ds_j}{s_j}.
    \end{equation*}
  \end{enumerate}
\end{theorem}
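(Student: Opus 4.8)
The plan is to verify, in turn, that the listed maps make $\cD = L \times L^\times$ a source-connected Lie groupoid, that $\sigma_\cD$ is a Poisson bi-vector with $\alpha$ and $\beta$ Poisson onto $(L,\pi)$ and $(L,-\pi)$, that $\sigma_\cD^{-1} = \omega_\cD$ over the orthant, and finally --- the crux --- that $\sigma_\cD$ is multiplicative. For the groupoid structure I would recognize $\cD$ as the transformation groupoid $L^\times \ltimes L$ for the $L^\times$-action $\bfs \cdot \bfx := \big(x_j \prod_i s_i^{\Omega_{ij}}\big)_j$ on $L$; this is a left action of the abelian group $L^\times$ since $\prod_i s_i^{\Omega_{ij}} \prod_i (s_i')^{\Omega_{ij}} = \prod_i (s_i s_i')^{\Omega_{ij}}$ (in the holomorphic case one takes $\Omega$ integral, so the action is given by Laurent monomials). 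Under this identification the maps $\alpha, \beta, \mathtt{1}, \iota, m$ in the statement are precisely the structure maps of the action groupoid, so all groupoid axioms, the Lie groupoid conditions, and the fact that $\alpha, \beta$ are submersions hold automatically; source-connectedness is immediate since the source fibre over $\bfx$ is $\{\bfx\} \times L^\times$ and $L^\times$ (being $\RR_\times^m$ or $\CC_\times^m$) is connected. I would also record here the Lie groupoid morphism $\Phi \colon \cG \to \cD$, $(\bfx, \bfp) \mapsto \big(\bfx, (e^{x_i p_i})_i\big)$, covering $\mathrm{id}_L$: comparing with Theorem~\ref{thm:PoiSpLogC} one checks that $\Phi$ intertwines $\alpha, \beta, \mathtt{1}, m$, and that over points with all $x_i \neq 0$ it restricts to a diffeomorphism onto $\cD^\times$ in the real case (to a covering in the complex case), since $p_i \mapsto e^{x_i p_i}$ is a bijection (resp.\ universal cover) when $x_i \neq 0$.

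\emph{Poisson structure and the symplectic form over the orthant.} From the displayed formula one reads off $\{x_i, x_j\} = \Omega_{ij} x_i x_j$, $\{x_i, s_j\} = -\delta_{ij} x_i s_i$, $\{s_i, s_j\} = 0$. On the dense open locus $\{x_i \neq 0 \text{ for all } i\}$ I would pass to the coordinates $u_i = \log x_i$, $v_i = \log s_i$, in which these become the \emph{constant} brackets $\{u_i, u_j\} = \Omega_{ij}$, $\{u_i, v_j\} = -\delta_{ij}$, $\{v_i, v_j\} = 0$; the Jacobi identity is automatic for a constant bi-vector, so $[\sigma_\cD, \sigma_\cD] = 0$ there, hence everywhere since its coefficients are Laurent-polynomial in $(\bfx, \bfs)$. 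The same constant picture gives $\alpha_* \sigma_\cD = \pi$ (the $x_i$ are $\alpha$-pullbacks with the right bracket) and, using $\beta^* u_i = u_i + \sum_k \Omega_{ki} v_k$ together with $\{\beta^* u_i, \beta^* u_j\} = -\Omega_{ij}$, also $\beta_* \sigma_\cD = -\pi$; and it identifies $\sigma_\cD^{-1}$ over the orthant by verifying --- exactly as in the proof of Theorem~\ref{thm:PoiSpLogC} --- that the constant bundle maps of $-\sum_i \partial_{u_i} \wedge \partial_{v_i} + \sum_{j<i} \Omega_{ij} \partial_{u_i} \wedge \partial_{u_j}$ and $\sum_i du_i \wedge dv_i + \sum_{j<i} \Omega_{ij}\, dv_i \wedge dv_j$ are mutually inverse, after which the substitution $du_i = dx_i/x_i$, $dv_i = ds_i/s_i$ yields the stated $\omega_\cD$.

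\emph{Multiplicativity.} Multiplicativity is the closed condition that $\Gamma_m \subset \cD^3$ be coisotropic with respect to $\sigma_\cD \oplus \sigma_\cD \oplus (-\sigma_\cD)$, so it is enough to establish it on the dense open $\cD^\times$, where $\sigma_\cD$ is non-degenerate and the condition becomes the Lagrangian identity $\mathrm{pr}_1^* \omega_\cD + \mathrm{pr}_2^* \omega_\cD = m^* \omega_\cD$. In the $(\bfu, \bfv)$-coordinates the maps $\mathrm{pr}_1, \mathrm{pr}_2, m$ on $\cD^\times {_\alpha \times_\beta} \cD^\times$ are affine-linear and $\omega_\cD$ is constant, so this reduces to the skew-symmetry bookkeeping $\sum_{i,k} \Omega_{ki}\, dv_k \wedge dv_i' = \sum_{j<i} \Omega_{ij} (dv_i \wedge dv_j' + dv_i' \wedge dv_j)$, checked by splitting the double sum into its $k<i$ and $k>i$ parts. (Alternatively, multiplicativity of $\sigma_\cD$ on $\cD^\times$ descends from that of $\sigma_\cG$, given by Theorem~\ref{thm:PoiSpLogC}, along the Poisson groupoid morphism $\Phi$, which is a surjective submersion over the orthant.) Together with the previous paragraph this shows $(\cD, \sigma_\cD) \rra (L, \pi)$ is a source-connected Poisson groupoid, symplectic over the orthant.

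I expect the only genuine subtlety to be the behaviour along the coordinate hyperplanes $x_i = 0$: there $\sigma_\cD$ degenerates, $\omega_\cD$ acquires poles, $\Phi$ ceases to be a submersion, and the log-coordinates are unavailable. The uniform remedy, applied in each step above, is to carry out the computation on the dense orthant $\cD^\times$ --- where every structure map is affine-linear and $\sigma_\cD$ has constant coefficients --- and then propagate the resulting (closed, or Laurent-polynomial) identities to all of $\cD$ by continuity.
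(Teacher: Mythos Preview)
Your proof is correct and complete. The paper, however, does not prove this theorem at all: it is stated with a citation to \cite{FG09c} and no argument is given --- the text simply proceeds to discuss why the name ``symplectic double'' is retained despite the degeneracy along the coordinate hyperplanes.

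Since there is no proof in the paper to compare against, let me just comment on your approach. Recognizing $\cD$ as the action groupoid of $L^\times$ on $L$ is the cleanest way to dispatch the Lie groupoid axioms; passing to logarithmic coordinates on the orthant to reduce everything (Jacobi, $\alpha$/$\beta$ Poisson, inversion $\sigma_\cD \leftrightarrow \omega_\cD$, and the multiplicativity identity) to constant-coefficient linear algebra is exactly the right move; and the density argument to propagate these identities across the coordinate hyperplanes is sound, since $[\sigma_\cD,\sigma_\cD]$ has polynomial coefficients and coisotropicity of $\Gamma_m$ is cut out by continuous equations. Your alternative route to multiplicativity via the Poisson groupoid morphism $\Phi = \lambda\colon \cG \to \cD$ is also valid and is in fact how the paper organizes the relationship among $\cG$, $\cB$, $\cD$ later (diagram \eqref{eq:LGpdCD}). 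The one genuine caveat you raise --- that in the holomorphic case the target map $s_i^{\Omega_{ij}}$ requires $\Omega$ to have integer entries --- is a real point that the paper leaves implicit; it is harmless in the cluster-theoretic applications of Section~\ref{sec:mutations}, where $\Omega$ is always integral.
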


It may be strange that the Poisson groupoid $\cD \rra L$ is called the \emph{symplectic} double since the Poisson structure drops rank along the coordinate axes.
The symplectic double in \cite{FG09c} is defined to be the restriction $\cD^\times =\{(\bfx, \bfs) ~|~ \bfx \in L^\times \} \cong L^\times \times L^\times$ which is genuinely a symplectic groupoid over $L^\times$.
The Poisson groupoid $\cD \rra L$ may be viewed as the natural extension of the symplectic groupoid $\cD^\times \rra L^\times$.
\begin{remark}
  We relate the target map in Theorem~\ref{thm:SymDBLoc} to the symplectic realization in \cite{GNR17}.
	
  With the notation in Theorem~\ref{thm:SymDBLoc} and the change of variables $x_i = e^{\chi_i}$ and $s_i = e^{\xi_i}$, the symplectic double $(\cD^\times, \omega_\cD)$ becomes the symplectic vector space $(\cU, \omega_\cU)$ with coordinates $(\chi_1, \ldots, \chi_m, \xi_1, \ldots, \xi_m)$, where $\cU$ is either $\RR^{2m}$ or $\CC^{2m}$, and
  \[\omega_\cU = \sum_{i} d\chi_i \wedge d\xi_i + \sum_{j < i} \Omega_{ij} d\xi_i \wedge d\xi_j.\]
  Although $(\cU, \omega_\cU)$ is not a symplectic groupoid integrating $(L, \pi)$ (e.g.\ the identity map cannot be defined), the maps $\alpha$ and $\beta$ are well-defined in these coordinates:
  \begin{align*}
    & \alpha: \cU \to L^\times, \qquad (\chi_1, \ldots, \chi_m, \xi_1, \ldots, \xi_m) \mapsto \left(e^{\chi_1}, \ldots, e^{\chi_m}\right); \\
    & \beta: \cU \to L^\times, \qquad (\chi_1, \ldots, \chi_m, \xi_1, \ldots, \xi_m) \mapsto \left(e^{\chi_1+\sum_i\Omega_{i1}\xi_i}, \ldots, e^{\chi_m+\sum_i\Omega_{im}\xi_i}\right).
  \end{align*}
  The $\alpha$-fibers are symplectic orthogonal to the $\beta$-fibers; both $\alpha: (\cU, \omega_\cU) \to (L, \pi)$ and $\beta: (\cU, - \omega_\cU) \to (L, \pi)$ are Poisson maps.

  For the standard symplectic structure $\omega_0 = \sum_{i} d\chi_i \wedge d\xi_i$ on $\cU$, since
  \[\omega_\cU - \omega_0 = \left(\sum_{i} d\chi_i \wedge d\xi_i + \sum_{j < i} \Omega_{ij} d\xi_i \wedge d\xi_j \right) - \sum_{i} d\chi_i \wedge d\xi_i  =\sum_{j < i} \Omega_{ij} d\xi_i \wedge d\xi_j\]
  is supported on the $\alpha$-fiber, we have that $\beta_*(-\omega_0^{-1}) = \beta_*(-\omega_\cU^{-1}) = \pi$.
  The symplectic realization $\beta: (\cU, -\omega_0) \to (L^\times, \pi)$ plays an essential role in \cite{GNR17}.
\end{remark}

Next, we describe the Lie algebroid of $\cD \rra L$.
We introduce coordinates on $A_\cD \cong L\times L$ with the bundle projection
\[A_\cD \cong L\times L \to L, \qquad (\bfx, \bfxi) \mapsto \bfx.\]
For an $m\times m$ skew-symmetric matrix $\Omega$, we define a Lie algebroid structure on $A_\cD$ with the anchor map
\[\rho_\cD: A_\cD \cong L\times L \to TL, \qquad (\bfx, \bfxi) \mapsto \sum_{i,j} \Omega_{ij}\xi_i x_j\frac{\partial}{\partial x_j},\]
where the kernel of $\rho_\cD$ has trivial bracket.

\begin{proposition}
  The Lie algebroid of the Poisson groupoid $\cD \rra L$ in Theorem~\ref{thm:SymDBLoc} is isomorphic to $A_\cD$.
\end{proposition}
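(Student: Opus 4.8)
The plan is to compute the Lie algebroid of the Poisson groupoid $\cD \rra L$ directly from the definition of the Lie functor \eqref{eq:Liefunctor} and compare it with the model $A_\cD$. First I would identify the underlying vector bundle. Since $\cD \cong L \times L^\times$ with coordinates $(\bfx, \bfs)$ and the identity section is $\mathtt{1}: \bfx \mapsto (\bfx, \mathbf{1})$, the tangent space $T\cD|_{\mathtt{1}_L}$ along the identity section is naturally $L \times \RR^{2m}$ (or $L \times \CC^{2m}$) with basis $\partial/\partial x_i$, $\partial/\partial s_i$. The source map $\alpha$ is the projection to the $\bfx$-factor, so $\ker(\alpha_*|_{\mathtt{1}_L})$ is spanned by the $\partial/\partial s_i$; hence as a vector bundle $A_\cD = \Lie\cD \cong L \times L$ via $(\bfx, \bfxi) \leftrightarrow \sum_i \xi_i \partial/\partial s_i|_{(\bfx,\mathbf{1})}$, matching the stated bundle projection.

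Next I would compute the anchor map, which by definition is $\beta_*$ restricted to $A_\cD$. Differentiating the target map $\beta(\bfx,\bfs) = \big(x_1 \prod_i s_i^{\Omega_{i1}}, \ldots, x_m \prod_i s_i^{\Omega_{im}}\big)$ at a point $(\bfx, \mathbf{1})$ in the direction $\partial/\partial s_k$ gives $\sum_j \Omega_{kj} x_j \partial/\partial x_j$ (using $\partial_{s_k}(x_j \prod_i s_i^{\Omega_{ij}})|_{\bfs = \mathbf 1} = \Omega_{kj} x_j$). Summing against $\bfxi = \sum_k \xi_k \partial/\partial s_k$ yields $\sum_{k,j} \Omega_{kj}\xi_k x_j \partial/\partial x_j = \sum_{i,j}\Omega_{ij}\xi_i x_j\partial/\partial x_j$, which is exactly $\rho_\cD$. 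Then I would check the Lie bracket: the algebroid bracket is the bracket of left-invariant vector fields, so I would exhibit, for a constant section $\bfxi$, the corresponding left-invariant vector field on $\cD$ and show the brackets of two such vanish. Using the multiplication law $m\big((\beta(\bfx,\bfs), \bfs'), (\bfx,\bfs)\big) = (\bfx, \bfs'\circ\bfs)$, right translation by $(\bfx,\bfs)$ acts on the $\bfs$-coordinate of the source fiber by the multiplicative group structure of $L^\times$, so the left-invariant extension of $\partial/\partial s_k|_{\mathtt{1}}$ is $s_k \partial/\partial s_k$ (the invariant vector fields on the torus/multiplicative group), and $[s_k \partial/\partial s_k, s_\ell \partial/\partial s_\ell] = 0$. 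This confirms the kernel of $\rho_\cD$ has trivial bracket, and together with the Leibniz rule this determines the bracket on all sections. Alternatively — and perhaps more cleanly — I would invoke the general fact that the Lie algebroid of any Poisson groupoid $(\cG, \sigma) \rra (M,\pi)$ is $(T^*_\pi M, TM)$ twisted appropriately; but here it is simplest to read off $\Lie\cD$ from the explicit formulas and match all three pieces (bundle, anchor, bracket) of Definition~\ref{def:algeroid} against $A_\cD$, concluding by the naturality of the Lie functor that the isomorphism of structures gives an isomorphism of Lie algebroids.

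The main obstacle I anticipate is bookkeeping with the left-invariant vector fields: one must be careful about which factor of the fibered product $\cD\,{}_\alpha\!\times_\beta\cD$ is being translated and the resulting sign/orientation conventions (the excerpt already warns that its source/target conventions differ from some references), and one must verify that the left-invariant extension is globally defined and that its flow preserves $\alpha$-fibers. Once the extension is correctly identified as $s_k\,\partial/\partial s_k$, the vanishing of the bracket and the computation of the anchor are routine, so the crux is setting up that identification correctly rather than any deep computation.
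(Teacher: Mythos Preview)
Your proposal is correct and follows essentially the same approach as the paper: identify $\Lie\cD$ as the kernel of $\alpha_*$ along the identity section, compute the anchor as $\beta_*$ restricted to this kernel, and match with $\rho_\cD$. The only cosmetic difference is that the paper passes to logarithmic coordinates $\xi_i=\log s_i$ (so that $\partial/\partial\xi_i=s_i\,\partial/\partial s_i$ are already the left-invariant frame) before differentiating $\beta$, whereas you differentiate in the $s_i$ coordinates and then separately identify the left-invariant extensions; your explicit verification of the bracket is a detail the paper leaves implicit.
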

\begin{proof}
  With the notation in Theorem~\ref{thm:SymDBLoc}, if we write $e^{\xi_i} = s_i$, then $\Lie \cD = \ker \left(\alpha_*: T\cD|_{\mathtt{1}_L} \to TL \right)$ is generated by $\frac{\partial}{\partial \xi_i}$, $i = 1, \ldots, m$.
  Rewriting the target map $\beta$ in the $(\bfx, \bfxi)$ coordinates, we get
  \[\beta: (\bfx, \bfxi) \mapsto \left(e^{\sum_i\Omega_{i1}\xi_i}x_1, \ldots, e^{\sum_i\Omega_{im}\xi_i}x_m\right).\]
  Therefore, 
  \begin{equation} 
    \label{eq: D anchor}
    \beta_* \left(\frac{\partial}{\partial \xi_i}\right) = \sum_j \Omega_{ij} x_j \frac{\partial}{\partial x_j}, \quad 
    \beta_* \left(\sum_i \xi_i \frac{\partial}{\partial \xi_i}\right) = \sum_{i,j} \Omega_{ij} \xi_i x_j \frac{\partial}{\partial x_j}.
  \end{equation}
  This shows that the bundle map
  \[A_\cD \to \Lie \cD, \qquad (\bfx, \bfxi) \mapsto \sum_i \xi_i \frac{\partial}{\partial \xi_i}\]
  is a Lie algebroid isomorphism.
\end{proof}

Fixing an $m\times m$ skew-symmetric matrix $\Omega$ and the corresponding log-canonical Poisson space $L$ as in Definition~\ref{def: logPoisson}, the cotangent Lie algebroid $T^*_\pi L$ is not isomorphic to the Lie algebroid $A_\cD$.
In fact, $T^*_\pi L$ is isomorphic to an iterated elementary modification of $A_\cD$ and we obtain a source-connected symplectic groupoid of $(L,\pi)$ via the blow-up construction \cite{GL14}. 
\begin{definition}
  Let $E$ be a vector bundle over $M$ and let $F$ be a subbundle of $E|_N$ for some hypersurface $N\subset M$.
  The \emph{elementary (lower) modfication} of $E$ along $F$, denoted by $[E\!:\!F]$, is the vector bundle with the sheaf of sections
  \[\Gamma\big(M, [E\!:\!F]\big) = \{s \in \Gamma(M, E) : s|_N \in \Gamma(N, F) \}.\]
\end{definition}

\begin{proposition} 
  \cite{GL14}
  Let $A$ be a Lie algebroid over a manifold $M$ and let $B$ be a subbundle of $A|_N$ for some hypersurface $N\subset M$ such that $B$ is also a Lie algebroid over $N$.
  Then $[A\!:\!B]$ is a Lie algebroid over~$M$.
\end{proposition}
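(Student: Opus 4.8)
The plan is to equip $[A\!:\!B]$ with the anchor and the bracket inherited from $A$ via the tautological inclusion of sheaves of sections, and then to check the Lie algebroid axioms, the only nontrivial point being that $\Gamma\big(M,[A\!:\!B]\big)\subseteq\Gamma(M,A)$ is closed under the bracket of $A$. First I would fix the local picture: near a point of $N$, choose a defining function $t$ for $N$ and a local frame $e_1,\dots,e_r$ of $A$ whose restrictions $e_1|_N,\dots,e_k|_N$ form a frame of $B$; then $e_1,\dots,e_k,te_{k+1},\dots,te_r$ is a local frame for $[A\!:\!B]$, while away from $N$ the sheaf of sections of $[A\!:\!B]$ is just $\Gamma(A)$. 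The $\cO_M$-linear inclusion $\Gamma\big(M,[A\!:\!B]\big)\hookrightarrow\Gamma(M,A)$ is induced by a bundle morphism $\iota\colon[A\!:\!B]\to A$ which is an isomorphism over $M\setminus N$, and I would define the anchor of $[A\!:\!B]$ to be $\rho':=\rho\circ\iota$. Here I understand the hypothesis ``$B$ is also a Lie algebroid over $N$'' to mean that $B$ carries the Lie algebroid structure restricted from $A$; in particular its anchor is $\rho|_B$, which forces $\rho(B)\subseteq TN$, and $\Gamma(N,B)$ is closed under the induced bracket.

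The heart of the proof is the claim that if $s_1,s_2\in\Gamma(M,A)$ satisfy $s_i|_N\in\Gamma(N,B)$, then $[s_1,s_2]|_N\in\Gamma(N,B)$. This is local near $N$, so I would work in the frame above. The key observation is that $[s_1,s_2]|_N$ depends only on the restrictions $s_i|_N$: if $s_1'=s_1+f\gamma$ with $f\in\cO_M$, $f|_N=0$, and $\gamma\in\Gamma(M,A)$, then the Leibniz rule in $A$ gives
\[
[s_1',s_2]=[s_1,s_2]+f[\gamma,s_2]-\big(\rho(s_2)f\big)\gamma,
\]
and restricting to $N$, the term $f[\gamma,s_2]$ vanishes because $f|_N=0$, while $\rho(s_2)f$ vanishes on $N$ because $\rho(s_2)|_N=\rho(s_2|_N)$ is a section of $TM|_N$ tangent to $N$ (as $s_2|_N\in\Gamma(N,B)$ and $\rho(B)\subseteq TN$) and $f|_N=0$. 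Hence $[s_1,s_2]|_N$ is exactly the $B$-bracket of the sections $s_1|_N,s_2|_N\in\Gamma(N,B)$, which lies in $\Gamma(N,B)$ by hypothesis. Globalizing with a partition of unity subordinate to a cover by such charts (and noting that the $A$-bracket is local, so off $N$ there is nothing to prove), this shows $\Gamma\big(M,[A\!:\!B]\big)$ is closed under $[\cdot,\cdot]_A$. I expect this step to be the main obstacle, and it is precisely where the hypothesis on $B$ is used.

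Granting this, the rest is inherited from $A$. For $f\in\cO_M$ and $s_1,s_2\in\Gamma\big(M,[A\!:\!B]\big)$, the section $fs_2$ again lies in $\Gamma\big(M,[A\!:\!B]\big)$ since $(fs_2)|_N=(f|_N)(s_2|_N)\in\Gamma(N,B)$, and the Leibniz identity $[s_1,fs_2]=f[s_1,s_2]+(\rho'(s_1)f)\,s_2$ holds because it holds in $A$ and $\rho'(s_1)=\rho(\iota(s_1))$; both terms on the right are again sections of $[A\!:\!B]$, the first by the previous paragraph and the second by $\cO_M$-linearity, so the bracket and anchor on $[A\!:\!B]$ are well-defined with the required Leibniz rule. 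Skew-symmetry and the Jacobi identity for $[\cdot,\cdot]$ on $\Gamma\big(M,[A\!:\!B]\big)$ are restrictions of the corresponding identities in $\Gamma(M,A)$, and $\rho'$ is a bundle morphism preserving brackets since $\rho'([s_1,s_2])=\rho([s_1,s_2]_A)=[\rho(s_1),\rho(s_2)]=[\rho'(s_1),\rho'(s_2)]$. Therefore $\big([A\!:\!B],[\cdot,\cdot],\rho'\big)$ is a Lie algebroid over $M$.
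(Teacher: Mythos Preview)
The paper does not give its own proof of this proposition: it is stated with the citation \cite{GL14} and no argument is supplied. So there is nothing to compare against in this paper; your proof stands on its own.

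Your argument is correct and is essentially the standard one. The decisive step is exactly where you put it: showing that $\Gamma\big(M,[A\!:\!B]\big)$ is closed under the $A$-bracket. Your computation that $[s_1,s_2]|_N$ depends only on $s_i|_N$ uses both ingredients of the Lie subalgebroid hypothesis in the right places---$\rho(B)\subseteq TN$ kills the $(\rho(s_2)f)\gamma$ term on $N$, and closure of $\Gamma(N,B)$ under the induced bracket finishes the job. Two minor remarks: first, the partition of unity is unnecessary since closure under the bracket is a local (indeed sheaf-theoretic) condition, and dropping it makes the argument work verbatim in the holomorphic category, which the paper also considers; second, you are right to flag that ``$B$ is also a Lie algebroid over $N$'' must be read as ``$B$ is a Lie subalgebroid of $A$ supported on $N$'' (anchor tangent to $N$, bracket induced from $A$)---an abstract Lie algebroid structure on $B$ unrelated to $A$ would not suffice, and this is indeed the intended meaning in \cite{GL14}.
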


Let $\cG \rra M$ be a Lie groupoid and let $\cH \rra N$ be a Lie subgroupoid over a hypersurface $N\subset M$.
We denote the blow-up of $\cG$ along $\cH$ by $\mathrm{Bl}(\cG, \cH)$ with the blow-down map $\nu: \mathrm{Bl}(\cG, \cH) \to \cG$ and write $[\cG\!:\!\cH] = \mathrm{Bl}(\cG, \cH) \setminus (S_N \cup T_N)$, where $S_N$ is the proper transform of $\alpha^{-1}(N)$ and $T_N$ is the proper transform of $\beta^{-1}(N)$.

\begin{theorem} 
  \cite{GL14}
  For a Lie groupoid $\cG \rra M$ and a Lie subgroupoid $\cH \rra N$ for some hypersurface $N\subset M$, there is a unique Lie groupoid structure $[\cG\!:\!\cH] \rra M$ such that the blow-down map $\nu: [\cG\!:\!\cH] \to \cG$ is a groupoid morphism.
  Moreover, blow-up of Lie groupoids corresponds to elementary modification of Lie algebroids.
  More precisely, we have
  \[\Lie[\cG\!:\!\cH] = [\Lie\cG\!:\!\Lie\cH].\]
\end{theorem}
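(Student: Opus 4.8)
The plan is to obtain every structure map of $[\cG\!:\!\cH]$ by lifting the corresponding structure map of $\cG$ through the blow-down $\nu$, and then to read off the Lie algebroid from an explicit local model. The tool for the lifts is the universal property of the blow-up: a smooth map $f\colon Y\to\cG$ from a manifold $Y$ lifts uniquely through $\nu$ precisely when the pullback $f^{*}\mathcal{I}_{\cH}$ of the vanishing ideal of $\cH$ is locally principal. First I would record a local normal form: near a point of $\cH$ one may choose coordinates $(z,\bfy,\bft)$ on $\cG$, with $\bft=(t_1,\dots,t_r)$ a system of fiber coordinates for $\alpha$ and $z$ pulled back from a defining function of $N$, so that $\alpha=(z,\bfy)$, the identity section is $\{\bft=\bfzero\}$, and $\cH=\{z=0,\ t_{r'+1}=\dots=t_r=0\}$ for some $r'\le r$; this is available because $\cH\rra N$ is a Lie subgroupoid contained in $\alpha^{-1}(N)$. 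Uniqueness in the theorem is then automatic, since $\nu$ restricts to a diffeomorphism over the dense open set $\cG\setminus\cH$, so any groupoid structure on $[\cG\!:\!\cH]$ making $\nu$ a morphism is determined there and hence everywhere.

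Next I would dispatch the straightforward lifts. As $\mathtt{1}(N)\subseteq\cH$, the identity section pulls $\mathcal{I}_\cH$ back to the principal ideal of $N$ and lifts to $\mathtt{1}\colon M\to[\cG\!:\!\cH]$; as $\iota$ preserves $\cH$ and interchanges $\alpha^{-1}(N)$ with $\beta^{-1}(N)$, it lifts to $\mathrm{Bl}(\cG,\cH)$, interchanges $S_N$ with $T_N$, and descends to an involution of $[\cG\!:\!\cH]$. A chart computation on $\mathrm{Bl}(\cG,\cH)$ shows that $\alpha\circ\nu$ fails to be a submersion exactly along the intersection of the exceptional divisor $E$ with $S_N$, so after deleting $S_N$ (and, symmetrically for $\beta$, $T_N$) the maps $\alpha,\beta\colon[\cG\!:\!\cH]\to M$ become surjective submersions. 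The second, crucial consequence of deleting $S_N$ and $T_N$ is this: a point of $[\cG\!:\!\cH]$ that lies over $\alpha^{-1}(N)$ or over $\beta^{-1}(N)$ must in fact lie over $\cH$, i.e.\ lie on $E$, since the only other points over those hypersurfaces belong to the removed proper transforms.

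The multiplication is the heart of the proof and the step I expect to be the main obstacle. On the fiber product $\cK:=[\cG\!:\!\cH]\,{_\alpha \times_\beta}\,[\cG\!:\!\cH]$ --- a manifold since $\alpha,\beta$ are submersions --- the observation above forces $\mathrm{pr}_1^{-1}(E)\cap\cK=\mathrm{pr}_2^{-1}(E)\cap\cK$: if $(\bar g,\bar h)\in\cK$ with $\bar g\in E$ then $\alpha(\bar g)\in N$, hence $\beta(\bar h)=\alpha(\bar g)\in N$, so $\bar h$ lies over $\beta^{-1}(N)$ and therefore $\bar h\in E$, and symmetrically. Write $E_\cK$ for this common hypersurface in $\cK$ and set $f:=m\circ(\nu\times\nu)\colon\cK\to\cG$. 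Since $\alpha(gh)=\alpha(h)$ and $z$ is an $\alpha$-pullback, $f^{*}z$ agrees, up to a unit, with a local defining function $w$ of $E_\cK$; and since $m$ maps $\cH\,{_\alpha \times_\beta}\,\cH$ into $\cH$, the remaining generators $f^{*}t_{r'+1},\dots,f^{*}t_r$ of $f^{*}\mathcal{I}_\cH$ vanish on $E_\cK$ and are hence divisible by $w$. Thus $f^{*}\mathcal{I}_\cH=(w)$ is principal, $f$ lifts uniquely to $\bar m\colon\cK\to\mathrm{Bl}(\cG,\cH)$, and a check in the two kinds of blow-up charts shows that $\bar m$ avoids $S_N\cup T_N$, producing $\bar m\colon\cK\to[\cG\!:\!\cH]$. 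Associativity and the compatibility of $\bar m$, $\iota$, $\mathtt{1}$ with $\alpha,\beta$ hold over $\nu^{-1}(\cG\setminus\cH)$ and thus everywhere, so $[\cG\!:\!\cH]\rra M$ is a Lie groupoid and $\nu$ a Lie groupoid morphism; this also gives the existence and uniqueness asserted in the theorem.

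Finally, for the Lie algebroid, the lifted identity section lies in $[\cG\!:\!\cH]$, and the blow-up chart around it has coordinates $(z,\bfy,\bft',\bfu)$ with $\bft'=(t_1,\dots,t_{r'})$, $\bfu=(u_1,\dots,u_{r-r'})$, blow-down $\nu(z,\bfy,\bft',\bfu)=(z,\bfy,\bft',z\bfu)$, lifted source $(z,\bfy)$, and identity section $\{\bft'=\bfzero,\ \bfu=\bfzero\}$. By $\Lie\cG=\ker(\alpha_*|_{\mathtt{1}_M})$ from \eqref{eq:Liefunctor}, the algebroid $\Lie[\cG\!:\!\cH]$ is framed near $N$ by $\partial_{t'_1},\dots,\partial_{t'_{r'}},\partial_{u_1},\dots,\partial_{u_{r-r'}}$, and $\nu_*$ identifies it inside $\Lie\cG$ with the span of $\partial_{t_1},\dots,\partial_{t_{r'}},z\,\partial_{t_{r'+1}},\dots,z\,\partial_{t_r}$ --- which is exactly $[\Lie\cG\!:\!\Lie\cH]$, since $\Lie\cH\subseteq\Lie\cG|_N$ is the span of $\partial_{t_1},\dots,\partial_{t_{r'}}$ over $N$. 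Because $\nu$ is a groupoid morphism, $\Lie\nu$ is a Lie algebroid morphism which is an isomorphism off $N$, so the anchor and bracket of $\Lie[\cG\!:\!\cH]$ are the restrictions of those of $\Lie\cG$; by the preceding proposition on elementary modifications these coincide with the anchor and bracket of $[\Lie\cG\!:\!\Lie\cH]$, giving $\Lie[\cG\!:\!\cH]=[\Lie\cG\!:\!\Lie\cH]$.
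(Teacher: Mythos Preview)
The paper does not prove this theorem: it is quoted verbatim from \cite{GL14} and no argument is given in the present paper. There is therefore nothing here to compare your proposal against; what you have written is a sketch of the proof that appears in \cite{GL14} itself.

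That said, your outline is essentially the right one and matches the argument in \cite{GL14}: lift each structure map through the blow-down via the universal property of the blow-up, using that the relevant pullback ideals are locally principal, and read off the algebroid from the standard blow-up chart around the identity section. Two small points are worth tightening. First, your local normal form with identity section $\{\bft=\bfzero\}$ is only available near points of $\mathtt{1}(N)\subset\cH$; for the submersion check on $\alpha\circ\nu$ and for the principality argument in the multiplication lift you should work near an arbitrary point of $\cH$, where all you need (and all you actually use) is that $\cH\subset\alpha^{-1}(N)$ is a closed submanifold, so $\mathcal{I}_\cH$ is locally generated by $\alpha^*z_0$ together with some transverse functions. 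Second, the assertion that $\bar m$ lands in $[\cG\!:\!\cH]$ rather than hitting $S_N\cup T_N$ deserves one line: since $\alpha\circ\nu\circ\bar m=\alpha\circ\mathrm{pr}_2$ and $\beta\circ\nu\circ\bar m=\beta\circ\mathrm{pr}_1$ are submersions on $\cK$, the image of $\bar m$ cannot meet the loci where $\alpha\circ\nu$ or $\beta\circ\nu$ fail to submerse. With those adjustments your argument is complete.
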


Our task is now clear.
We need to find the Lie subalgebroids of $A_\cD$ such that the iterated elementary modifications along these Lie subalgebroids yield the cotangent algebroid $T^*_\pi L$.

For each $k = 1, \ldots, m$, we define the Lie subalgebroid $A_\cD^k$ of $A_\cD$ as follows.
Let $L_k$ be the codimension-1 subpace in $L$ defined by $x_k = 0$.
As a vector bundle, $A_\cD^k$ is the corank-$1$ subbundle of $A_\cD |_{L_k}$ defined by $\xi_k = 0$.
That is, the anchor map of $A_\cD^k$ is given by
\begin{align*}
  \rho_\cD^k: ~& A_\cD^k \cong L_k\times L_k \to TL_k, \\
  & (\xi_1, \ldots, \xi_{k-1}, \xi_{k+1}, \ldots, \xi_m, x_1, \ldots, x_{k-1}, x_{k+1}, \ldots, x_m) \mapsto \sum_{\substack{1\le i,j \le m\\ i,j\ne k}} \Omega_{ij}\xi_i x_j\frac{\partial}{\partial x_j}.
\end{align*}

\begin{proposition}
  Fixing an $m\times m$ skew-symmetric matrix $\Omega$ and the corresponding log-canonical Poisson space $L$ as in Definition~\ref{def: logPoisson}, the cotangent algebroid $T^*_\pi L$ is isomorphic to the Lie algebroid
  \[[\ldots[[A_\cD\!:\!A_\cD^1]\!:\!A_\cD^2] \ldots \!:\!A_\cD^m].\]
\end{proposition}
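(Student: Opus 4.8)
The plan is to exhibit an explicit isomorphism of Lie algebroids
\[
\Phi \colon T^*_\pi L \longrightarrow \tilde A := [\ldots[[A_\cD\!:\!A_\cD^1]\!:\!A_\cD^2]\ldots\!:\!A_\cD^m],
\]
covering the identity on $L$, defined on the global coframe of $T^*L$ by $\Phi(dx_k) = x_k\,\partial/\partial\xi_k$ and extended $\cO_L$-linearly.

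First I would pin down the structure of $\tilde A$. The $k$-th elementary modification affects only the $\xi_k$-direction: since $A_\cD^k$ is cut out by the single equation $\xi_k = 0$, the modification replaces the generating section $\partial/\partial\xi_k$ by $x_k\,\partial/\partial\xi_k$ and keeps the remaining generators. Because the hypersurfaces $L_k = \{x_k = 0\}$ are pairwise transverse, the modifications commute, and (reinterpreting each $A_\cD^k$ as the subbundle $\{\xi_k = 0\}$ of the already-modified algebroid restricted to $L_k$) each one is a legitimate modification: the relevant subbundle is closed under the bracket, since the constant sections $\partial/\partial\xi_j$ pairwise commute (so $A_\cD$ is the action algebroid of an abelian Lie algebra acting via $\rho_\cD$), and its anchor $\rho_\cD(\partial/\partial\xi_j) = \sum_l \Omega_{jl}x_l\,\partial/\partial x_l$ has vanishing $\partial/\partial x_k$-component along $L_k$, hence is tangent to $L_k$. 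Consequently $\Gamma(L, \tilde A)$ is the free $\cO_L$-submodule of $\Gamma(L, A_\cD)$ generated by $\{x_1\,\partial/\partial\xi_1, \ldots, x_m\,\partial/\partial\xi_m\}$ (equivalently, the sections $s$ with $s|_{L_k}$ having vanishing $\xi_k$-component for every $k$), and by the cited result of \cite{GL14} the anchor and bracket of $\tilde A$ are the restrictions of those of $A_\cD$. In particular $\{x_k\,\partial/\partial\xi_k\}_k$ is a global frame of $\tilde A$, so $\Phi$ is a vector bundle isomorphism.

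Next I would verify that $\Phi$ intertwines anchors and brackets. Contracting $\pi = \sum_{j<i}\Omega_{ij}x_ix_j\,\partial/\partial x_i \wedge \partial/\partial x_j$ against $dx_k$ and using skew-symmetry of $\Omega$ gives $\pi^\flat(dx_k) = x_k\sum_j \Omega_{kj}x_j\,\partial/\partial x_j = \rho_{\tilde A}(x_k\,\partial/\partial\xi_k)$, so $\rho_{\tilde A}\circ\Phi = \pi^\flat$. By $\cO_L$-bilinearity, the Leibniz rule, and this anchor identity, bracket compatibility reduces to the coframe. On the cotangent side the Koszul bracket satisfies $[dx_k, dx_l] = d\{x_k, x_l\} = \Omega_{kl}(x_l\,dx_k + x_k\,dx_l)$, so $\Phi[dx_k, dx_l] = \Omega_{kl}x_kx_l(\partial/\partial\xi_k + \partial/\partial\xi_l)$; on the other side, using $[\partial/\partial\xi_k, \partial/\partial\xi_l]_{A_\cD} = 0$ and $\rho_\cD(\partial/\partial\xi_k)(x_l) = \Omega_{kl}x_l$ in the Leibniz rule gives $[x_k\,\partial/\partial\xi_k, x_l\,\partial/\partial\xi_l]_{\tilde A} = \Omega_{kl}x_kx_l(\partial/\partial\xi_k + \partial/\partial\xi_l)$, and the two agree. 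Hence $\Phi$ is an isomorphism of Lie algebroids.

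I expect the only real obstacle to be the first step: making rigorous that the iterated elementary modification along the $A_\cD^k$ produces exactly the submodule generated by the $x_k\,\partial/\partial\xi_k$ — in particular reinterpreting each $A_\cD^k$ as a subbundle of the previously modified algebroid (the naive subbundle of $A_\cD|_{L_k}$ and this reinterpretation agree only away from the other coordinate hyperplanes), checking that each such modification is legitimate, and confirming that the transversality of the $L_k$ makes the modifications commute. Once the structure of $\tilde A$ is settled, the anchor and bracket verifications are short computations on the coframe.
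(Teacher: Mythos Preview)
Your proof is correct and follows the same idea as the paper's: the isomorphism $dx_k \mapsto x_k\,\partial/\partial\xi_k$ is exactly what the paper's two-line proof obtains by comparing the anchor maps $\pi^\flat(dx_i) = \sum_j \Omega_{ij} x_i x_j\,\partial/\partial x_j$ and $\rho_\cD(\partial/\partial\xi_i) = \sum_j \Omega_{ij} x_j\,\partial/\partial x_j$. The paper stops there, whereas you supply the details the paper leaves implicit---the description of the iterated modification as the submodule generated by the $x_k\,\partial/\partial\xi_k$, the check that each $A_\cD^k$ remains a legitimate subalgebroid at each stage, and the explicit verification of bracket compatibility via the Koszul bracket---so your argument is a more complete version of the same proof rather than a different route.
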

\begin{proof}
  The Poisson structure $\pi = \sum_{j < i} \Omega_{ij} x_ix_j\frac{\partial}{\partial x_i} \wedge \frac{\partial}{\partial x_j}$ gives rise to the Poisson anchor map
  \begin{equation}	
    \label{eq:log-can Poiss}
    \pi^\flat: T^*L \to TL, \qquad dx_i \mapsto \sum_j \Omega_{ij}  x_ix_j \frac{\partial}{\partial x_j}.
  \end{equation}
  The result follows by comparing \eqref{eq:log-can Poiss} with the anchor map \eqref{eq: D anchor} for $\Lie \cD\cong A_\cD$.
\end{proof}

To obtain a source-connected symplectic groupoid of the log-canonical Poisson space $(L, \pi)$ which integrates $T^*_\pi L$, we need to iteratively blow up the Poisson groupoid $\cD \rra L$ along the subgroupoids integrating $A_\cD^k$, for $k =1, \ldots, m$.
Indeed, the Lie subalgebroid $A_\cD^k$ integrates to the subgroupoid $\cD_k \rra L_k$ defined by $x_k=0$ and $s_k=1$, so we define the Lie groupoid
\[\cB = [\ldots[[\cD\!:\!\cD_1]\!:\!\cD_2] \ldots \!:\!\cD_m].\]
Using the blow-up coordinates $u_i = \frac{s_i-1}{x_i}$, we have the following symplectic groupoid structure on $\cB \rra L$.
\begin{theorem} 
  \label{th:blowup groupoid}
  For the log-canonical Poisson structure $\{x_i, x_j\} = \Omega_{ij} x_i x_j$ on $L$ (which is either $\RR^m$ or $\CC^m$), we have that $(\cB, \omega_\cB) \rra (L, \pi)$ is a source-connected symplectic groupoid with the following structures:
  \begin{enumerate}
    \item $\cB \subset L \times L$ has the coordinates $(\bfx, \bfu) = (x_1, \ldots, x_m, u_1, \ldots, u_m)$;
    \item $\cB$ is given by $x_i u_i + 1 > 0$ if $L = \RR^m$ and $x_i u_i + 1\ne 0$ if $L = \CC^m$;
    \item the source map is
      \[\alpha: \cB \to L, \qquad (\bfx, \bfu) \mapsto \bfx;\]
    \item the target map is
      \[\beta: \cB \to L, \qquad (\bfx, \bfu) \mapsto \left(x_1 \prod_{i=1}^m (x_iu_i+1)^{\Omega_{i1}}, \ldots, x_m \prod_{i=1}^m (x_iu_i+1)^{\Omega_{im}}\right);\]
	\item the identity map $\mathtt{1}: L \to \cB, \quad \bfx \mapsto (\bfx, \bfzero)$;
    \item the inverse map is $\iota: \cB \to \cB$ sending $(\bfx, \bfu)$ to 
      \[\left(x_1 \prod_{i=1}^m (x_iu_i+1)^{\Omega_{i1}}, \ldots, x_m \prod_{i=1}^m (x_iu_i+1)^{\Omega_{im}}, -\frac{u_1\prod_{i=1}^m (x_iu_i+1)^{-\Omega_{i1}}}{x_1u_1+1}, \ldots, -\frac{u_m\prod_{i=1}^m (x_iu_i+1)^{-\Omega_{im}}}{x_mu_m+1}\right);\]
    \item the multiplication map is
      \[\begin{aligned}
	  m: & \cB {_\alpha \times_\beta} \cB \to \cB, \\
	     & \left((\bfx, \bfu), \left(x_1 \prod_{i=1}^m (u_ix_i+1)^{\Omega_{i1}}, \ldots, x_m \prod_{i=1}^m (u_ix_i+1)^{\Omega_{im}}, \bfu'\right)\right) \mapsto (\bfx, \bfu''),
        \end{aligned}\]
        where $u''_j = \left( u'_j (x_ju_j+1) \prod_{i=1}^m (x_iu_i+1)^{\Omega_{ij}} + u_j \right)$;
    \item the multiplicative Poisson structure is
    \begin{equation*}
	\sigma_\cB = -\sum_{i} (x_i u_i + 1) \frac{\partial}{\partial x_i} \wedge \frac{\partial}{\partial u_i} - \sum_{i, j} \Omega_{ij}x_i u_j \frac{\partial}{\partial x_i} \wedge \frac{\partial}{\partial u_j} +\sum_{j < i} \Omega_{ij}u_iu_j \frac{\partial}{\partial u_i} \wedge \frac{\partial}{\partial u_j} + \sum_{j < i} \Omega_{ij}x_ix_j \frac{\partial}{\partial x_i} \wedge \frac{\partial}{\partial x_j},
    \end{equation*}
    and equivalently the multiplicative symplectic structure $\omega_\cB$ is
    \begin{align*} 
	\omega_\cB & = \sum_{i} \frac{1}{x_iu_i+1}dx_i \wedge du_i + \sum_{i, j} \frac{\Omega_{ij}u_ix_j}{(x_iu_i+1)(x_ju_j+1)}  dx_i \wedge du_j \\
	& \quad + \sum_{j < i}\frac{\Omega_{ij}u_iu_j}{(x_iu_i+1)(x_ju_j+1)}  dx_i \wedge dx_j + \sum_{j < i} \frac{\Omega_{ij}x_ix_j}{(x_iu_i+1)(x_ju_j+1)}  du_i \wedge du_j.
      \end{align*}
  \end{enumerate}
\end{theorem}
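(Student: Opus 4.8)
The plan is to read off the structure maps and multiplicative tensors of the iterated blow-up $\cB = [\ldots[\cD\!:\!\cD_1]\ldots\!:\!\cD_m]$ directly in the blow-up chart with coordinates $(\bfx,\bfu)$, where $u_i = (s_i-1)/x_i$ and $s_i = x_iu_i+1$. Write $\cB^\circ := \{(\bfx,\bfu) \in \cB : x_i \neq 0 \text{ for all } i\}$, an open dense subset of $\cB$. On $\cB^\circ$ the map $(\bfx,\bfu) \mapsto (\bfx,\, \bfx\circ\bfu + (1,\ldots,1))$ is a diffeomorphism onto $\{(\bfx,\bfs)\in\cD : x_i \neq 0\}$ (its inverse is $s_i \mapsto u_i = (s_i-1)/x_i$, and $s_i = x_iu_i+1$ lies in $\RR_\times$, resp.\ $\CC_\times$, precisely when $x_iu_i+1 > 0$, resp.\ $\neq 0$), so over $\cB^\circ$ the groupoid structure of $\cB$ together with $\sigma_\cB$ is nothing but the transport of the corresponding data on $\cD$ from Theorem~\ref{thm:SymDBLoc} through this change of variables. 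I would therefore substitute $s_i = x_iu_i+1$ into the formulas of Theorem~\ref{thm:SymDBLoc} and simplify, using the skew-symmetry of $\Omega$ throughout.

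The crucial point is that every expression obtained this way is a rational function of $(\bfx,\bfu)$ whose only denominators are products of powers of the quantities $x_iu_i+1$; the apparent factors of $1/x_i$ introduced by the coordinate change always cancel. Concretely: $\alpha$ and $\mathtt 1$ are unchanged (the unit becomes $\bfx \mapsto (\bfx,\bfzero)$ since $s_i = 1 \Leftrightarrow u_i = 0$); the $k$-th component of $\beta$ becomes $x_k\prod_i(x_iu_i+1)^{\Omega_{ik}}$; the new $u$-coordinate of $\iota(\bfx,\bfu)$ is $\frac{1/s_k - 1}{\beta(\bfx,\bfu)_k} = -\frac{u_k\prod_j(x_ju_j+1)^{-\Omega_{jk}}}{x_ku_k+1}$, with a like computation for $m$ giving $u''_j = u'_j(x_ju_j+1)\prod_i(x_iu_i+1)^{\Omega_{ij}} + u_j$ after the overall $1/x_j$ cancels a factor $x_j$; and for $\sigma_\cD$ one uses $\partial/\partial s_i|_{\bfx} = \tfrac1{x_i}\,\partial/\partial u_i$ and $\partial/\partial x_i|_{\bfs} = \partial/\partial x_i|_{\bfu} - \tfrac{u_i}{x_i}\,\partial/\partial u_i$, whereupon the $1/x_i$ in the first vector field cancels the coefficient $x_is_i$, the $\Omega_{ij}x_ix_j\,\partial_{x_i}\wedge\partial_{x_j}$ term produces the stated $\Omega_{ij}u_iu_j\,\partial_{u_i}\wedge\partial_{u_j}$ term and (after reindexing by skew-symmetry) the cross terms $-\sum_{i,j}\Omega_{ij}x_iu_j\,\partial_{x_i}\wedge\partial_{u_j}$, and one lands on the stated $\sigma_\cB$. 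Since $x_iu_i+1 > 0$ on $\cB$ in the real case (so real powers are defined) and $x_iu_i+1 \neq 0$ in the complex case, each of these rational expressions extends smoothly, resp.\ holomorphically, from $\cB^\circ$ to all of $\cB$; this is precisely the sense in which the blow-up resolves the rank drop of $\sigma_\cD$ along the coordinate hyperplanes. The symplectic form is then $\omega_\cB := \sigma_\cB^{-1}$, and that the two displayed formulas are mutually inverse is checked exactly as for $\sigma_\cG$ and $\omega_\cG$ in Theorem~\ref{thm:PoiSpLogC}.

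To assemble the conclusion: the groupoid axioms of Definition~\ref{def:groupoid}, the multiplicativity identity $\mathrm{pr}_1^*\omega_\cB + \mathrm{pr}_2^*\omega_\cB = m^*\omega_\cB$, and the relation $\omega_\cB = \sigma_\cB^{-1}$ all hold on the dense open set $\cB^\circ$, being inherited from $\cD$ (and, for the symplectic statement, from the genuinely symplectic $\cD^\times$ of \cite{FG09c}); as these are closed conditions they persist on all of $\cB$ by continuity, so in particular $\omega_\cB$ is closed and $\cB$ is a Lie groupoid (alternatively, all of this is immediate from the blow-up theorem of \cite{GL14} quoted above, which also gives $\Lie\cB\cong T^*_\pi L$). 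Non-degeneracy of $\omega_\cB$ on the exceptional locus $\{x_i = 0 \text{ for some }i\}$ — the only place it is not already inherited — follows from the fact that the coefficients of $\sigma_\cB$ are polynomial in $(\bfx,\bfu)$, so $(\sigma_\cB)^m$ is a polynomial multiple of $\bigwedge_i\partial_{x_i}\wedge\partial_{u_i}$; comparing on $\cB^\circ$ with $(\sigma_\cD)^m = \pm m!\prod_i x_is_i\bigwedge_i\partial_{x_i}\wedge\partial_{s_i}$ (whose Pfaffian is insensitive to the $\partial_{x_i}\wedge\partial_{x_j}$ terms since the $s$-block vanishes and the $x$--$s$ block is diagonal) together with $\partial_{x_i}\wedge\partial_{s_i}\mapsto\tfrac1{x_i}\partial_{x_i}\wedge\partial_{u_i}$, this multiple must equal $\pm m!\prod_i(x_iu_i+1)$, which is nowhere zero on $\cB$. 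Hence $\sigma_\cB$, and therefore $\omega_\cB$, is non-degenerate on all of $\cB$. Source-connectedness is immediate since $\alpha^{-1}(\bfx) = \{\bfu : x_iu_i+1 > 0 \text{ for all }i\}$ is a product of intervals and half-lines over $\RR$, resp.\ the analogous connected locus over $\CC$.

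I expect the one genuinely delicate point to be the bookkeeping showing that the chart $(\bfx,\bfu)$ really exhausts $[\ldots[\cD\!:\!\cD_1]\ldots\!:\!\cD_m]$ rather than covering a proper sub-chart of it: at the $k$-th stage one has $\alpha^{-1}(L_k) = \beta^{-1}(L_k) = \{x_k = 0\}$ (since $\beta(\bfx,\bfs)_k = x_k\prod_i s_i^{\Omega_{ik}}$ and the $s_i$ are invertible), so the proper transforms deleted in forming $[\,\cdot\!:\!\cD_k\,]$ lie entirely in the complementary chart $\{x_k/(s_k-1)\text{ finite}\}$, and deleting them removes exactly the part of the blow-up excluded by the condition $x_ku_k+1 > 0$ (resp.\ $\neq 0$). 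Checking this compatibility at each of the $m$ successive blow-ups is where care is needed; the remaining substitutions and simplifications are mechanical.
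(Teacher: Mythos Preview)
Your approach is correct and is essentially the same as the paper's: both derive all structures by the change of variables $u_i=(s_i-1)/x_i$ from the symplectic double $\cD$ of Theorem~\ref{thm:SymDBLoc}, and then verify non-degeneracy via the top exterior power. The paper's proof is in fact much terser than yours---it simply asserts the change of variables and records $(\omega_\cB)^m = m!\prod_i (x_iu_i+1)^{-1}\bigwedge_i dx_i\wedge du_i$---so your density argument, the explicit cancellation of the $1/x_i$ factors, the source-connectedness check, and the discussion of why the $(\bfx,\bfu)$ chart exhausts the iterated blow-up are all details you are supplying beyond what the paper writes; the only cosmetic difference is that you compute the Pfaffian of $\sigma_\cB$ rather than of $\omega_\cB$.
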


\begin{proof}
  These are the same groupoid structures as in Theorem~\ref{thm:SymDBLoc} with the change of variables $u_i = \frac{s_i-1}{x_i}$.
  The difference is precisely that $\omega_\cB$ is no longer singular.
  Note that $\omega_\cB$ is also non-degenerate since
  \[(\omega_\cB)^m = m!  \prod_{1\leq i\leq m} \frac{1}{x_iu_i+1} \bigwedge\limits_{1\leq i\leq m} dx_i \wedge du_i\]
  is a volume form.
\end{proof}

\begin{remark}
  Recall that the infinitesimal object of a Poisson groupoid is a Lie bialgebroid \cite{MX94}.
  The Lie bialgebroid of the Poisson groupoid $\cD \rra L$ is isomorphic to $\big(A_\cD, TL(-\log D)\big)$, where $TL(-\log D)$ is the log-tangent bundle with respect to the normal crossing divisor $D = L_1 + L_2 + \ldots + L_k$ \cite{GL14}.
  That is,
  \[TL(-\log D) = [\ldots[[TL\!:\!TL_1]\!:\!TL_2] \ldots \!:\!TL_m],\]
  or equivalently the sections of $TL(-\log D)$ are the vector fields on $L$ tangent to $L_k$ for $k = 1, \ldots, m$.
  As a vector bundle, $TL(-\log D)$ is generated by $x_k \frac{\partial}{\partial x_k}$, $k = 1, \ldots, m$, and its dual bundle $T^*L(-\log D)$ is generated by $\frac{d x_k}{x_k}$, $k = 1, \ldots, m$.
  The bundle map
  \[A_\cD \to T^*L(-\log D), \qquad (\bfx, \bfxi) \mapsto \sum_k\xi_k\frac{d x_k}{x_k}\]
  identifies $A_\cD$ with $T^*L(-\log D)$.

  The iterated blow-up construction that takes $\cD \rra L$ to $\cB \rra L$ corresponds to the elementary modification that takes the Lie bialgebroid $(A_\cD, TL(-\log D))$ to the Lie bialgebroid $(T^*_\pi L, TL)$, where $T^*_\pi L$ is an elementary lower modification of $A_\cD$ and $TL$ is an elementary upper modification of $TL(-\log D)$.
  See the first author's thesis for details \cite{Li13}.
\end{remark}

For an integrable Lie algebroid, every source-connected groupoid receives a surjective groupoid map from the source-simply-connected groupoid.
This means that we have a natural symplectic groupoid morphism $\kappa:\cG\to\cB$.
Together with the blow-down map $\nu:\cB\to\cD$, the following commutative diagram summarizes the maps among the groupoids $\cG$, $\cB$ and $\cD$.
\begin{equation} 
  \label{eq:LGpdCD}
  \xymatrix{
    \cG  \ar[dr]_{\kappa} \ar[rr]^{\lambda} && \cD \\
      & \cB \ar[ur]_{\nu}
  }
\end{equation}
Explicitly in coordinates, these maps are given by
\begin{align*}
  & \kappa: \cG \to \cB, \qquad (\bfx, \bfp) \mapsto \left(\bfx, \frac{e^{x_1p_1}-1}{x_1}, \ldots, \frac{e^{x_mp_m}-1}{x_m}\right); \\
  & \nu: \cB \to \cD, \qquad (\bfx, \bfu) \mapsto (\bfx, x_1u_1+1, \ldots, x_mu_m+1); \\
  & \lambda: \cG \to \cD, \qquad (\bfx, \bfp) \mapsto (\bfx, e^{x_1p_1}, \ldots, e^{x_mp_m}).
\end{align*}

\begin{remark}
  If $L = \RR^m$, then $\cB \rra L$ is source-simply-connected and all three groupoid maps are groupoid isomorphisms after restricting the base to $L^\times = \RR_\times^m$, but if $L = \CC^m$, then $\cB$ is not source-simply-connected and $\kappa: \cG \to \cB$ is a covering map for each source fiber.
\end{remark}

We summarize the pros and cons of the three groupoids $\cG \rra L$, $\cB \rra L$ and $\cD \rra L$.
The symplectic groupoid $\cG \rra L$ is source-simply-connected when the underlying field is $\CC$, the tradeoff is that its groupoid structure maps are transcendental.
On the other hand, the multiplicative Poisson structure of $\cD \rra L$ has a simple expression and its groupoid structure maps are algebraic, but it is only symplectic when restricted to $L^\times$.
Comparing to $\cG \rra L$, the symplectic groupoid $\cB \rra L$ has the added bonus that its groupoid structure maps are algebraic, but the tradeoff is that the expressions for its multiplicative Poisson structure and its groupoid structures are more complicated.

\section{Hamiltonian Perspective on Mutations}
\label{sec:mutations}

In this section, we introduce mutations of cluster charts from the Hamiltonian viewpoint.
This perspective was first given in \cite{FG09c} and is the foundation for the main results of \cite{GNR17}.
We demonstrate how the Hamiltonian perspective provides a canonical choice of mutations for groupoid charts which glue to give a symplectic groupoid integrating various log-canonical Poisson structures on cluster varieties.
The standard combinatorics of $\bfc$-vectors, $\bfg$-vectors, and $F$-polynomials are then lifted to provide descriptions of iterations of the gluing maps for groupoid charts.

While in Section~\ref{sec:local} it was most convenient to use covariant notation for all maps of groupoids, due to the algebraic nature of cluster mutations and also the piecewise nature of the formulas, in this section we use contravariant notation for describing all maps, i.e.\ we describe them via the pullback of coordinate functions.

Let $\tilde B=(B_{ij})$ be an $m\times n$ integer matrix with $m\ge n$.
Write $B$ for the upper $n\times n$ submatrix of $\tilde B$ and assume $B$ is skew-symmetrizable, i.e.\ there exists a diagonal integer matrix $D=\diag(d_1,\ldots,d_n)$ with each $d_i>0$ so that $DB$ is skew-symmetric. 
Such an $m\times n$ matrix $\tilde B$ with skew-symmetrizable principal submatrix $B$ is called an \emph{exchange matrix}.
We fix a skew-symmetrizing matrix $D$ and refer to a skew-symmetric $m\times m$ matrix $\Omega=(\Omega_{ij})$ as \emph{$D$-compatible} with $\tilde B$ if $\tilde B^T\Omega=[D\ \boldsymbol{0}]$, where $\boldsymbol{0}$ denotes an $n\times(m-n)$ matrix with all zero entries.
In this case, we call $(\tilde B,\Omega)$ a \emph{$D$-compatible pair} or simply a \emph{compatible pair} if the symmetrizing matrix $D$ is understood.
Note that the existence of a matrix $\Omega$ comptible with $\tilde B$ implies $\tilde B$ has rank $n$, in other words the columns of $\tilde B$ are linearly independent.

\subsection{Mutation of Cluster Charts}
\label{sec:cluster mutations}

We begin by recalling the Hamiltonian perspective of mutations for cluster charts.
Let $L_\cX=\RR^n$ and $L_\cA=\RR^m$ with corresponding orthants $L^\times_\cX=\RR_\times^n$ and $L^\times_\cA=\RR_\times^n$.
Write $\bfy=(y_1,\ldots,y_n)$ for a set of coordinates on $L_\cX$ and $\bfx=(x_1,\ldots,x_m)$ for a set of coordinates on $L_\cA$, we use the same symbols for their restrictions to $L^\times_\cX$ and $L^\times_\cA$.
Given a $D$-compatible pair of matrices $(\tilde B,\Omega)$, denote by $\{\cdot,\cdot\}_\cX$ and $\{\cdot,\cdot\}_\cA$ the log-canonical Poisson brackets on $L_\cX$ and $L_\cA$ given by
\begin{equation}
  \label{eq:brackets}
  \{y_k,y_\ell\}_\cX=d_kB_{k\ell}y_ky_\ell\qquad\text{and}\qquad\{x_i,x_j\}_\cA=\Omega_{ij}x_ix_j.
\end{equation}
An easy calculation using the compatibility condition for the pair $(\tilde B,\Omega)$ shows that there is a Poisson map
\begin{equation}\label{eq:ensemble}
	\rho:L_\cA\to L_\cX,\qquad \rho^*(y_k)=\hat y_k:=\prod_{i=1}^m x_i^{B_{ik}}
\end{equation}
defined on the locus where $x_i\ne0$ if there exists $k$ with $B_{ik}<0$, i.e.\ we have $\{\hat y_k,\hat y_\ell\}_\cA=d_kB_{k\ell}\hat y_k\hat y_\ell$ for $1\le k,\ell\le n$.
Motivated by the terminology of Fock and Goncharov \cite{FG09a}, we will refer to the pair of Poisson manifolds $L_\cX$ and $L_\cA$ together with the Poisson map $\rho$ as a \emph{Poisson ensemble} associated to the compatible pair $(\tilde B,\Omega)$.

The \emph{Euler dilogarithm} is the function of a single real variable defined by
\[\Li_2(y)=-\int_0^y \frac{\log(1-u)}{u}du,\qquad y<1.\]
It will become apparent from the results below that, from the Poisson perspective, the Euler dilogarithm lies at the heart of cluster algebra theory.
However, we will work in a slightly more general setting.

Fix a collection of positive integers $\bfr=(r_1,\ldots,r_n)\in\ZZ_{>0}^n$ and write $L_\bfr=\prod_{\ell=1}^n \RR^{r_\ell-1}$ with coordinates $z_{\ell,j}$ for $1\le\ell\le n$, $1\le j\le r_\ell-1$.
Write $L^\times_\bfr=\prod_{\ell=1}^n \RR_\times^{r_\ell-1}$ for the orthant inside $L_\bfr$.
We identify the $\ell$-th component of a point in $L_\bfr$ with the degree $r_\ell$ monic polynomial $Z_\ell(u)\in\RR[u]$ with $Z_\ell(0)=1$ given by
\begin{equation} \label{eq:gen mut poly}
Z_\ell(u)=1+z_{\ell,1}u+\cdots+z_{\ell,r_\ell-1}u^{r_\ell-1}+u^{r_\ell}.
\end{equation}
There is a natural \emph{star-involution} $z_{\ell,j}\mapsto z^*_{\ell,j}:=z_{\ell,r_\ell-j}$ on $L_\bfr$ and we write $Z^*_\ell(u)$ for the image of $Z_\ell(u)$ under this involution, more directly $Z^*_\ell(u)=u^{r_\ell} Z_\ell(u^{-1})$ is the the polynomial with reversed coefficients.
It will be convenient to introduce the notation $j^*=r_\ell-j$ so that $z^*_{\ell,j}=z_{\ell,j^*}$.
Although $j^*$ depends on $\ell$, we suppress it from the notation.

Set 
\[L_{\cX,\bfr}:=L_\cX\times L_\bfr\qquad\text{ and }\qquad L_{\cA,\bfr}:=L_\cA\times L_\bfr\]
with corresponding orthants $L^\times_{\cX,\bfr}$ and $L^\times_{\cA,\bfr}$.
Here we extend the Poisson structures on $L_\cX$ and $L_\cA$ to $L_{\cX,\bfr}$ and $L_{\cA,\bfr}$ so that the coordinate functions $z_{\ell,j}$ on $L_\bfr$ are Casimirs. 
Then the map $\rho:L_\cA\to L_\cX$ extends to a Poisson map $L_{\cA,\bfr}\to L_{\cX,\bfr}$, which we still denote by $\rho$, by acting as the identity map on $L_\bfr$ components.

Given a choice of sign $\varepsilon\in\{\pm1\}$, define Hamiltonian functions $h^{k,\varepsilon}_{\cX,\bfr}\in\cO_{L^\times_{\cX,\bfr}}$ and $h^{k,\varepsilon}_{\cA,\bfr}\in\cO_{L^\times_{\cA,\bfr}}$ for $1\le k\le n$ by
\begin{equation}
  \label{eq:hamiltonians}
  h_{\cX,\bfr}^{k,\varepsilon}:=-\frac{\varepsilon}{d_k}\int_0^{y_k^\varepsilon} \frac{\log\big(Z_k^\circ(u)\big)}{u}du\qquad\text{and}\qquad h_{\cA,\bfr}^{k,\varepsilon}:=-\frac{\varepsilon}{d_k}\int_0^{\hat y_k^\varepsilon} \frac{\log\big(Z_k^\circ(u)\big)}{u}du,
\end{equation}
where
\[ Z_k^\circ(u)=\begin{cases} Z_k(u) & \text{if $\varepsilon=+1$;}\\ Z_k^*(u) & \text{if $\varepsilon=-1$.} \end{cases}\]
Clearly, there is an equality $\rho^*(h_{\cX,\bfr}^{k,\varepsilon})=h_{\cA,\bfr}^{k,\varepsilon}$, where $\rho$ here denotes the restriction to $L^\times_{\cA,\bfr}$.
\begin{remark}
  \label{rem:cluster case}
  Observe that when $r_k=1$, i.e.\ $Z_k(u)=1+u$, we have $h_{\cX,\bfr}^{k,\varepsilon}:=\frac{\varepsilon}{d_k}\Li_2(-y_k^\varepsilon)$ and $h_{\cA,\bfr}^{k,\varepsilon}:=\frac{\varepsilon}{d_k}\Li_2(-\hat y_k^\varepsilon)$.
  We will refer to this as the ``cluster case'' since the mutation rules presented below reduce to the classical mutations for cluster algebras when $Z_k(u)=1+u$ (c.f. Remark~\ref{rem:cluster case 2}).
\end{remark}

Write $X_{\cX,\bfr}^{k,\varepsilon}\in\cT_{L^\times_{\cX,\bfr}}$ and $X_{\cA,\bfr}^{k,\varepsilon}\in\cT_{L^\times_{\cA,\bfr}}$ for the Hamiltonian vector fields associated to $h_{\cX,\bfr}^{k,\varepsilon}$ and $h_{\cA,\bfr}^{k,\varepsilon}$ respectively, i.e.\ the vector fields naturally associated to the derivations $\{h_{\cX,\bfr}^{k,\varepsilon},\cdot\}_\cX$ and $\{h_{\cA,\bfr}^{k,\varepsilon},\cdot\}_\cA$.
\begin{lemma}
  \label{le:hamiltonian dynamics}
  For $1\le k\le n$ and $\varepsilon\in\{\pm1\}$, the vector fields $X_{\cX,\bfr}^{k,\varepsilon}$ and $X_{\cA,\bfr}^{k,\varepsilon}$ determine the following dynamics on $L^\times_{\cX,\bfr}$ and $L^\times_{\cA,\bfr}$:
  \begin{align}
    \label{eq:X dynamics}
    \dot y_\ell&:=\{h_{\cX,\bfr}^{k,\varepsilon},y_\ell\}_\cX=-B_{k\ell}\log\big(Z_k^\circ(y_k^\varepsilon)\big)y_\ell\quad\text{for}\quad 1\le\ell\le n;\\
    \label{eq:A dynamics}
    \dot x_j&:=\{h_{\cA,\bfr}^{k,\varepsilon},x_j\}_\cA=-\delta_{jk}\log\big(Z_k^\circ(\hat y_k^\varepsilon)\big)x_k\quad\text{for}\quad 1\le j\le m.
  \end{align}
  Moreover, the points of $L^\times_\bfr$ are fixed by the Hamiltonian flows.
\end{lemma}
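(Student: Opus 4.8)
The plan is to compute the Hamiltonian vector fields directly from the definitions, exploiting the log-canonical form of the brackets and the chain rule. First I would treat the $\cX$-side. By definition of the Hamiltonian vector field as the derivation $\{h_{\cX,\bfr}^{k,\varepsilon},\cdot\}_\cX$ and the Leibniz rule, it suffices to compute $\{h_{\cX,\bfr}^{k,\varepsilon},y_\ell\}_\cX$ and $\{h_{\cX,\bfr}^{k,\varepsilon},z_{\ell',j}\}_\cX$. Since each $z_{\ell',j}$ is a Casimir by hypothesis on the extended Poisson structure, the latter brackets vanish, giving the asserted fact that points of $L^\times_\bfr$ are fixed. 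For the former, I would use the fact that $h_{\cX,\bfr}^{k,\varepsilon}$ depends on the $L_\cX$-coordinates only through $y_k$, so by the chain rule and the derivation property $\{h_{\cX,\bfr}^{k,\varepsilon},y_\ell\}_\cX=\frac{\partial h_{\cX,\bfr}^{k,\varepsilon}}{\partial y_k}\{y_k,y_\ell\}_\cX$. Here I differentiate the integral defining $h_{\cX,\bfr}^{k,\varepsilon}$: writing $g(w)=-\frac{\varepsilon}{d_k}\int_0^w \frac{\log(Z_k^\circ(u))}{u}\,du$ so that $h_{\cX,\bfr}^{k,\varepsilon}=g(y_k^\varepsilon)$, the fundamental theorem of calculus gives $g'(w)=-\frac{\varepsilon}{d_k}\frac{\log(Z_k^\circ(w))}{w}$, and then $\frac{\partial}{\partial y_k}g(y_k^\varepsilon)=g'(y_k^\varepsilon)\cdot\varepsilon y_k^{\varepsilon-1}=-\frac{1}{d_k}\frac{\log(Z_k^\circ(y_k^\varepsilon))}{y_k}$. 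Combining with $\{y_k,y_\ell\}_\cX=d_kB_{k\ell}y_ky_\ell$ yields $\dot y_\ell=-B_{k\ell}\log(Z_k^\circ(y_k^\varepsilon))y_\ell$, as claimed. (Note the $d_k$ and the $\varepsilon$ both cancel cleanly — this is presumably why the normalization $-\varepsilon/d_k$ was chosen.)

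For the $\cA$-side the argument is parallel but requires one extra application of the chain rule, since $h_{\cA,\bfr}^{k,\varepsilon}=g(\hat y_k^\varepsilon)$ with $\hat y_k=\prod_{i=1}^m x_i^{B_{ik}}$ now depending on several $x_i$'s. I would again use the derivation property: $\{h_{\cA,\bfr}^{k,\varepsilon},x_j\}_\cA=\sum_i \frac{\partial h_{\cA,\bfr}^{k,\varepsilon}}{\partial x_i}\{x_i,x_j\}_\cA$. Using $\frac{\partial \hat y_k}{\partial x_i}=B_{ik}\hat y_k/x_i$ and the computation of $g'$ above, one gets $\frac{\partial h_{\cA,\bfr}^{k,\varepsilon}}{\partial x_i}=-\frac{1}{d_k}\frac{\log(Z_k^\circ(\hat y_k^\varepsilon))}{\hat y_k}\cdot B_{ik}\frac{\hat y_k}{x_i}=-\frac{B_{ik}}{d_k x_i}\log(Z_k^\circ(\hat y_k^\varepsilon))$. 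Then $\{h_{\cA,\bfr}^{k,\varepsilon},x_j\}_\cA=\sum_i\left(-\frac{B_{ik}}{d_k x_i}\log(Z_k^\circ(\hat y_k^\varepsilon))\right)\Omega_{ij}x_ix_j=-\frac{1}{d_k}\log(Z_k^\circ(\hat y_k^\varepsilon))x_j\sum_i B_{ik}\Omega_{ij}$. The sum $\sum_i B_{ik}\Omega_{ij}=(B^T\Omega^T)_{kj}=-(B^T\Omega)_{kj}$ is governed by the compatibility condition $\tilde B^T\Omega=[D\ \boldsymbol 0]$: its $(k,j)$ entry equals $D_{kj}=\delta_{jk}d_k$ when $j\le n$ (hence $\sum_i B_{ik}\Omega_{ij}=-\delta_{jk}d_k$ after tracking the transpose/sign), and equals $0$ when $j>n$. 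Substituting gives $\dot x_j=-\delta_{jk}\log(Z_k^\circ(\hat y_k^\varepsilon))x_k$ for all $1\le j\le m$, and the $z_{\ell,j}$ are fixed since they are Casimirs.

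The only genuinely delicate point is bookkeeping: getting the signs and the transpose in the compatibility identity $\tilde B^T\Omega=[D\ \boldsymbol 0]$ exactly right, and confirming it forces $\sum_{i=1}^m B_{ik}\Omega_{ij}$ to be $-\delta_{jk}d_k$ for $j\le n$ and $0$ for $j>n$ — this is where the asymmetry between the first $n$ and last $m-n$ coordinates of $L_\cA$ enters, and it is the source of the Kronecker delta $\delta_{jk}$ in \eqref{eq:A dynamics}. I expect everything else (differentiating the dilogarithm-type integral, applying Leibniz, invoking the Casimir property) to be routine. I would also remark that the identity $\rho^*(h_{\cX,\bfr}^{k,\varepsilon})=h_{\cA,\bfr}^{k,\varepsilon}$ noted just before the lemma gives, via the fact that $\rho$ is Poisson, an independent consistency check: $\rho_*X_{\cA,\bfr}^{k,\varepsilon}=X_{\cX,\bfr}^{k,\varepsilon}$, and indeed pushing forward \eqref{eq:A dynamics} under $\rho$ recovers \eqref{eq:X dynamics}, since $\rho^*(\dot y_\ell)/\rho^*(y_\ell)=\sum_i B_{i\ell}\dot x_i/x_i=-B_{k\ell}\log(Z_k^\circ(\hat y_k^\varepsilon))$.
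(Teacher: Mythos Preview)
Your approach is the same as the paper's---compute the bracket via the chain rule and then invoke the compatibility condition---and the $\cX$-side computation is fine. On the $\cA$-side, however, your matrix bookkeeping slipped at precisely the point you flagged as delicate. The sum $\sum_i B_{ik}\Omega_{ij}$ is \emph{directly} the $(k,j)$ entry of $\tilde B^T\Omega$, since $(\tilde B^T\Omega)_{kj}=\sum_i(\tilde B^T)_{ki}\Omega_{ij}=\sum_i B_{ik}\Omega_{ij}$; there is no extra transpose or sign to track. Hence by compatibility $\sum_i B_{ik}\Omega_{ij}=+\delta_{jk}d_k$, not $-\delta_{jk}d_k$. (Your identification with $(B^T\Omega^T)_{kj}$ is wrong: that product gives $\sum_i B_{ik}\Omega_{ji}$, which is a different sum.) With the correct sign, substitution into $-\frac{1}{d_k}\log\big(Z_k^\circ(\hat y_k^\varepsilon)\big)x_j\sum_i B_{ik}\Omega_{ij}$ yields $-\delta_{jk}\log\big(Z_k^\circ(\hat y_k^\varepsilon)\big)x_j$, which is the claimed formula. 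So your final answer is right, but as written your intermediate value and your final answer are inconsistent; had you actually substituted $-\delta_{jk}d_k$ you would have obtained the wrong sign.

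For comparison, the paper compresses the whole $\cA$-side to one line by computing $\{\hat y_k,x_j\}_\cA$ as a single bracket rather than expanding into partial derivatives, which avoids the matrix-index shuffle entirely.
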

\begin{proof}
  Equation~\eqref{eq:X dynamics} was essentially proven in \cite{GNR17}.
  Since equation~\eqref{eq:A dynamics} seems to technically be new, we will indicate the key steps in the computation here:
  \[\{h_{\cA,\bfr}^{k,\varepsilon},x_j\}_\cA=-\frac{\log\big(Z_k^\circ(\hat y_k^\varepsilon)\big)}{d_k\hat y_k}\{\hat y_k,x_j\}=-\delta_{jk}\log\big(Z_k^\circ(\hat y_k^\varepsilon)\big)x_j.\]
\end{proof}

Since $B_{kk}=0$, it immediately follows that $y_k$ is a conserved quantity under the flow of $L^\times_{\cX,\bfr}$ by the vector field $X_{\cX,\bfr}^{k,\varepsilon}$.
Since the map $\rho$ is Poisson with $\hat y_k=\rho^*(y_k)$ and $h_{\cA,\bfr}^{k,\varepsilon}=\rho^*(h_{\cX,\bfr}^{k,\varepsilon})$, we also see that $\hat y_k$ is a conserved quantity under the flow of $L^\times_{\cA,\bfr}$ by the vector field $X_{\cA,\bfr}^{k,\varepsilon}$.
\begin{corollary}
  \label{cor:time-one flows}
  For $1\le k\le n$ and $\varepsilon\in\{\pm1\}$, the time-$t$ flow $\varphi_{\cX,\bfr}^t:L^\times_{\cX,\bfr}\to L^\times_{\cX,\bfr}$ of the Hamiltonian vector field $X_{\cX,\bfr}^{k,\varepsilon}$ is given on coordinates by
  \[(\varphi_{\cX,\bfr}^t)^*(y_\ell)=\big(Z_k^\circ(y_k^\varepsilon)\big)^{-tB_{k\ell}}y_\ell,\qquad (\varphi_{\cX,\bfr}^t)^*(z_{\ell,i})=z_{\ell,i},\]
  and the time-$t$ flow $\varphi_{\cA,\bfr}^t:L^\times_{\cA,\bfr}\to L^\times_{\cA,\bfr}$ of the Hamiltonian vector field $X_{\cA,\bfr}^{k,\varepsilon}$ is given on coordinates by
  \[(\varphi_{\cA,\bfr}^t)^*(x_j)=\big(Z_k^\circ(\hat y_k^\varepsilon)\big)^{-t\delta_{jk}}x_j,\qquad (\varphi_{\cA,\bfr}^t)^*(z_{\ell,i})=z_{\ell,i}.\]
  These maps preserve the respective Poisson structures.
\end{corollary}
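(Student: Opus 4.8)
The plan is to integrate directly the two systems of ODEs recorded in Lemma~\ref{le:hamiltonian dynamics}, using the conserved quantities $y_k$ and $\hat y_k$ to reduce each to a decoupled family of linear equations with \emph{constant} coefficients, which can then be solved by exponentials.

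First I would handle the $\cX$-side. The statement $(\varphi_{\cX,\bfr}^t)^*(z_{\ell,i})=z_{\ell,i}$ is immediate from Lemma~\ref{le:hamiltonian dynamics}, which says the points of $L^\times_\bfr$ are fixed. Since $B_{kk}=0$, equation \eqref{eq:X dynamics} gives $\dot y_k=0$, so $y_k$ is constant along each integral curve; because every coordinate is positive on the orthant $L^\times_{\cX,\bfr}$ (here the $z_{k,i}$ are positive, hence $Z_k^\circ(y_k^\varepsilon)>0$ and the logarithm is legitimate), the scalar $\log\big(Z_k^\circ(y_k^\varepsilon)\big)$ is likewise constant in $t$. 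Then \eqref{eq:X dynamics} reads $\dot y_\ell=c_\ell y_\ell$ with $c_\ell:=-B_{k\ell}\log\big(Z_k^\circ(y_k^\varepsilon)\big)$ constant, whose solution $y_\ell(t)=e^{c_\ell t}y_\ell(0)=\big(Z_k^\circ(y_k^\varepsilon)\big)^{-tB_{k\ell}}y_\ell(0)$ is exactly the claimed formula; it also shows the trajectory remains in $\RR_\times^n$, so the flow is complete on $L^\times_{\cX,\bfr}$.

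For the $\cA$-side I would first check that $\hat y_k$ is conserved under the flow of $X_{\cA,\bfr}^{k,\varepsilon}$: by the chain-rule computation in the proof of Lemma~\ref{le:hamiltonian dynamics}, $\{h_{\cA,\bfr}^{k,\varepsilon},\hat y_k\}_\cA$ is proportional to $\{\hat y_k,\hat y_k\}_\cA=0$ (equivalently, it is the $\rho$-pullback of $\{h_{\cX,\bfr}^{k,\varepsilon},y_k\}_\cX=0$). Hence $\log\big(Z_k^\circ(\hat y_k^\varepsilon)\big)$ is constant in $t$, and \eqref{eq:A dynamics} becomes $\dot x_j=0$ for $j\ne k$ and $\dot x_k=c\,x_k$ with $c:=-\log\big(Z_k^\circ(\hat y_k^\varepsilon)\big)$ constant; integrating yields $(\varphi_{\cA,\bfr}^t)^*(x_j)=\big(Z_k^\circ(\hat y_k^\varepsilon)\big)^{-t\delta_{jk}}x_j$, with $(\varphi_{\cA,\bfr}^t)^*(z_{\ell,i})=z_{\ell,i}$ and completeness on $L^\times_{\cA,\bfr}$ as before. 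Finally, that both maps are Poisson is the general fact recalled in Section~\ref{sec:Poisson generalities} that the time-$t$ flow of a Hamiltonian vector field is a Poisson map (a Hamiltonian vector field is a Poisson vector field); if one prefers, it can be verified by hand from the explicit rescaling formulas using log-canonicity. I do not expect a real obstacle here — the only point requiring genuine care is the conserved-quantity claim on the $\cA$-side, since it is precisely what guarantees the right-hand sides of \eqref{eq:A dynamics} (and hence of \eqref{eq:X dynamics}) are $t$-independent and therefore that the naive exponential is the correct solution.
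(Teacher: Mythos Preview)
Your proposal is correct and follows essentially the same approach as the paper's proof: identify $y_k$ (respectively $\hat y_k$) as a conserved quantity so that the right-hand sides of \eqref{eq:X dynamics} and \eqref{eq:A dynamics} become $t$-independent, integrate the resulting linear ODEs, and invoke the Hamiltonian nature of the flow for Poisson preservation. The paper's argument is simply a terser version of yours---it relegates the conserved-quantity observation to the paragraph preceding the corollary and then declares the flow computation ``immediate''---so your write-up is in fact a more explicit rendering of the same idea.
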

\begin{proof}
  The computation of the flows is immediate from the preceding discussion.
  That the flow of these vector fields preserve the Poisson structures follows from their Hamiltonian definition.
\end{proof}

In what follows we use the notation $[b]_+:=\max\{b,0\}$.
For $1\le k\le n$ and a sign $\varepsilon\in\{\pm1\}$, define the \emph{mutation of the compatible pair $(\tilde B,\Omega)$ in direction $k$} by $\mu_{\bfr,k,\varepsilon}(\tilde B,\Omega)=\big(\tilde B',\Omega'\big)$, where
\begin{itemize}
  \item $\tilde B'=(B'_{ij})$ is given by
    \begin{equation}
      \label{eq:matrix mutation}
      B'_{ij}=
      \begin{cases}
        -B_{ij} & \text{if $i=k$ or $j=k$;}\\ 
        B_{ij} + [-\varepsilon B_{ik}r_k]_+ B_{kj} + B_{ik} [\varepsilon r_kB_{kj}]_+ & \text{otherwise;}
      \end{cases}
    \end{equation}
  \item $\Omega'=E_{\bfr,k,\varepsilon}^T\Omega E_{\bfr,k,\varepsilon}$ for $E_{\bfr,k,\varepsilon}$ the $m\times m$ matrix with entries
    \[E_{ij}=\begin{cases}\delta_{ij} & \text{if $j\ne k$;}\\ -1 & \text{if $i=j=k$;}\\ [-\varepsilon B_{ik}r_k]_+ & \text{if $i\ne j=k$.}\end{cases}\]
\end{itemize}
It is an easy exercise to check that $\big(\tilde B',\Omega'\big)$ is again $D$-compatible and that the mutation of compatible pairs is an involution, more precisely 
\[\mu_{\bfr,k,\varepsilon}\mu_{\bfr,k,\varepsilon'}(\tilde B,\Omega)=(\tilde B,\Omega)\]
for any signs $\varepsilon,\varepsilon'\in\{\pm1\}$.
In particular, the mutation $\mu_{\bfr,k,\varepsilon}$ acting on $D$-compatible pairs is independent of the sign $\varepsilon$ and we simply write $\mu_{\bfr,k}$ for this mutation when the choice of sign can be ignored.

Let $L_{\cX'}=\RR^n$ and $L_{\cA'}=\RR^m$ with coordinates $\bfy'=(y'_1,\ldots,y'_n)$ and $\bfx'=(x'_1,\ldots,x'_m)$ respectively and with corresponding orthants $L^\times_{\cX'}=\RR_\times^n$ and $L^\times_{\cA'}=\RR_\times^m$.
For $1\le k\le n$, set $(\tilde B',\Omega')=\mu_{\bfr,k}(\tilde B,\Omega)$ and consider the log-canonical Poisson structures $\{\cdot,\cdot\}_{\cX'}$ and $\{\cdot,\cdot\}_{\cA'}$ on $L_{\cX'}$ and $L_{\cA'}$ given by
\begin{equation*}
  \{y'_k,y'_\ell\}_{\cX'}=d_kB'_{k\ell}y'_ky'_\ell\qquad\text{and}\qquad\{x'_i,x'_j\}_{\cA'}=\Omega'_{ij}x'_ix'_j.
\end{equation*}
Define $L_{\cX',\bfr}$ and $L_{\cA',\bfr}$ as above with corresponding orthants $L^\times_{\cX',\bfr}$ and $L^\times_{\cA',\bfr}$.
Write $z'_{\ell,j}$ for coordinates on the $L_\bfr$ components.
\begin{lemma}
  \label{le:tropical cluster transformations}
  For $1\le k\le n$ and $\varepsilon\in\{\pm1\}$, there are Poisson maps $\tau_{\cX,\bfr}^{k,\varepsilon}:L^\times_{\cX,\bfr}\to L^\times_{\cX',\bfr}$ and $\tau_{\cA,\bfr}^{k,\varepsilon}:L^\times_{\cA,\bfr}\to L^\times_{\cA',\bfr}$ given on coordinates by
  \begin{align*}
    (\tau_{\cX,\bfr}^{k,\varepsilon})^*(y'_\ell)
    &=\begin{cases} 
      y_k^{-1} & \text{if $\ell=k$;}\\ 
      y_\ell y_k^{[\varepsilon r_kB_{k\ell}]_+} & \text{if $\ell\ne k$;}
    \end{cases}\\
    (\tau_{\cA,\bfr}^{k,\varepsilon})^*(x'_j)
    &=\begin{cases} 
      x_k^{-1}\prod\limits_{i=1}^m x_i^{[-\varepsilon B_{ik}r_k]_+} & \text{if $j=k$;}\\
      x_j & \text{if $j\ne k$;}
    \end{cases}\\
    (\tau_{-,\bfr}^{k,\varepsilon})^*(z'_{\ell,j})
    &=\begin{cases}
      z^*_{k,j} & \text{if $\ell=k$;}\\
      z_{\ell,j} & \text{if $\ell\ne k$.}
    \end{cases}  
  \end{align*}
\end{lemma}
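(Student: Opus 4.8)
The plan is to reduce the Poisson assertion, in each of the two cases, to an elementary matrix identity.

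First I would dispense with well-definedness: each of the three displayed formulas is a product map which on the $L_\cX$-factor (resp.\ $L_\cA$-factor) is a Laurent monomial map and on the $L_\bfr$-factor is the coordinate-permuting involution swapping $z_{k,j}\leftrightarrow z^*_{k,j}$ and fixing the remaining $z_{\ell,j}$. All three are therefore smooth and carry positive orthants into positive orthants, so $\tau_{\cX,\bfr}^{k,\varepsilon}$ and $\tau_{\cA,\bfr}^{k,\varepsilon}$ are well-defined maps. Next, since the Poisson structures on $L_{\cX,\bfr}$ and $L_{\cA,\bfr}$ are products with the $L_\bfr$-factor carrying the zero bivector, and the target coordinates $z'_{\ell,j}$ are Casimirs, the only brackets requiring attention are $\{(\tau_{\cX,\bfr}^{k,\varepsilon})^*y'_\ell,(\tau_{\cX,\bfr}^{k,\varepsilon})^*y'_{\ell'}\}_\cX$ versus $(\tau_{\cX,\bfr}^{k,\varepsilon})^*\{y'_\ell,y'_{\ell'}\}_{\cX'}$, and likewise for $\cA$; every bracket involving a pullback of some $z'_{\ell,j}$ vanishes identically on both sides.

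Then I would invoke the general fact that for a monomial map $\phi$ of log-canonical Poisson structures with $\phi^*(w'_a)=\prod_b w_b^{M_{ba}}$ and structure matrices $P,P'$ (so $\{w_b,w_c\}=P_{bc}w_bw_c$ and $\{w'_a,w'_{a'}\}=P'_{aa'}w'_aw'_{a'}$), the Leibniz rule together with the log-canonical relations yields
\[\{\phi^*w'_a,\phi^*w'_{a'}\}=(M^{T}PM)_{aa'}\,\phi^*w'_a\,\phi^*w'_{a'},\]
so that $\phi$ is Poisson precisely when $M^{T}PM=P'$. For $\tau_{\cA,\bfr}^{k,\varepsilon}$ the exponent matrix on the $L_\cA$-factor is exactly $E_{\bfr,k,\varepsilon}$ and the structure matrices are $\Omega$ and $\Omega'$, so the required identity $E_{\bfr,k,\varepsilon}^{T}\Omega E_{\bfr,k,\varepsilon}=\Omega'$ is nothing but the definition of $\mu_{\bfr,k,\varepsilon}(\tilde B,\Omega)$; the $\cA$-case is thus free. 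For $\tau_{\cX,\bfr}^{k,\varepsilon}$ the structure matrices are the skew-symmetric matrices $DB$ and $DB'$ with $D=\diag(d_1,\dots,d_n)$, and the exponent matrix $F=F_{\bfr,k,\varepsilon}$ on the $L_\cX$-factor has $k$-th column $-\bfe_k$ and, for $\ell\ne k$, $\ell$-th column $\bfe_\ell+[\varepsilon r_kB_{k\ell}]_+\bfe_k$; so the $\cX$-case amounts to proving $F^{T}(DB)F=DB'$ with $B'$ as in \eqref{eq:matrix mutation}.

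I would establish this last identity by expanding $(F^{T}(DB)F)_{\ell\ell'}$ and matching it with $d_\ell B'_{\ell\ell'}$ in the four cases determined by whether $\ell$ or $\ell'$ equals $k$, the only ingredients being the skew-symmetrizability relation $d_kB_{k\ell}=-d_\ell B_{\ell k}$, the scaling identity $[\lambda b]_+=\lambda[b]_+$ for $\lambda>0$, and the vanishing $B_{kk}=B_{\ell\ell}=0$; for instance, when $\ell,\ell'\ne k$ with $\ell\ne\ell'$ one finds $(F^{T}(DB)F)_{\ell\ell'}=d_\ell B_{\ell\ell'}+[\varepsilon r_kB_{k\ell}]_+\,d_kB_{k\ell'}+[\varepsilon r_kB_{k\ell'}]_+\,d_\ell B_{\ell k}$, which equals $d_\ell B'_{\ell\ell'}$ after rewriting $[\varepsilon r_kB_{k\ell}]_+\,d_k=[\varepsilon r_kd_kB_{k\ell}]_+=[-\varepsilon r_kd_\ell B_{\ell k}]_+=d_\ell[-\varepsilon r_kB_{\ell k}]_+$. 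There is no conceptual obstacle: the entire content is the bookkeeping of this single matrix identity, which is precisely the $\cX$-counterpart of the defining relation $\Omega'=E_{\bfr,k,\varepsilon}^{T}\Omega E_{\bfr,k,\varepsilon}$. (As an alternative to the direct check one could instead verify the commuting square $\rho\circ\tau_{\cA,\bfr}^{k,\varepsilon}=\tau_{\cX,\bfr}^{k,\varepsilon}\circ\rho$ and use that $\rho$ is a surjective Poisson map to transport the $\cA$-case to the $\cX$-case, but verifying that square requires calculations with \eqref{eq:matrix mutation} of comparable length.)
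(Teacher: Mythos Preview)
Your argument is correct and follows the same underlying idea as the paper --- direct verification that the pullback brackets match the target log-canonical structure --- but you organize it more cleanly. The paper simply computes the two nontrivial brackets by hand: for $\tau_{\cX,\bfr}^{k,\varepsilon}$ it expands $\{y_\ell y_k^{[\varepsilon r_kB_{k\ell}]_+},y_{\ell'} y_k^{[\varepsilon r_kB_{k\ell'}]_+}\}_\cX$ and massages the coefficient into $d_\ell B'_{\ell\ell'}$ using skew-symmetrizability, and for $\tau_{\cA,\bfr}^{k,\varepsilon}$ it expands $\{x_k^{-1}\prod_i x_i^{[-\varepsilon B_{ik}r_k]_+},x_j\}_\cA$ and recognizes the coefficient as $\Omega'_{kj}$. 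Your reformulation via the general criterion $M^TPM=P'$ for monomial Poisson maps is tidier, and in particular your observation that the exponent matrix for $\tau_{\cA,\bfr}^{k,\varepsilon}$ is literally $E_{\bfr,k,\varepsilon}$, so that the $\cA$-case is the \emph{definition} of $\Omega'$, is a shortcut the paper does not make explicit. The $\cX$-case computation $F^T(DB)F=DB'$ you outline is the same calculation the paper carries out coordinate-wise; nothing is missing.
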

\begin{proof}
  Since the coordinates on $L_\bfr$ are Casimirs there is nothing to check for these coordinates.
  By skew-symmetry of the Poisson brackets, there are essentially only two cases to check on $L_\cX$ and on $L_\cA$ but just one of these is non-trivial for each transformation.
  For $\ell,\ell'\ne k$, we have
  \begin{align*}
    \{(\tau_{\cX,\bfr}^{k,\varepsilon})^*(y'_\ell),(\tau_{\cX,\bfr}^{k,\varepsilon})^*(y'_{\ell'})\}_\cX
    &=\left\{y_\ell y_k^{[\varepsilon r_kB_{k\ell}]_+},y_{\ell'} y_k^{[\varepsilon r_kB_{k\ell'}]_+}\right\}_\cX\\
    &=(d_\ell B_{\ell\ell'}+d_k[\varepsilon r_kB_{k\ell}]_+B_{k\ell'}+d_\ell B_{\ell k}[\varepsilon r_kB_{k\ell'}]_+) y_\ell y_k^{[\varepsilon r_kB_{k\ell}]_+} y_{\ell'} y_k^{[\varepsilon r_kB_{k\ell'}]_+}\\
    &=(d_\ell B_{\ell\ell'}+d_\ell[-\varepsilon B_{\ell k}r_k]_+B_{k\ell'}+d_\ell B_{\ell k}[\varepsilon r_kB_{k\ell'}]_+) y_\ell y_k^{[\varepsilon r_kB_{k\ell}]_+} y_{\ell'} y_k^{[\varepsilon r_kB_{k\ell'}]_+}\\
    &=d_\ell B'_{\ell\ell'} (\tau_{\cX,\bfr}^{k,\varepsilon})^*(y'_\ell) (\tau_{\cX,\bfr}^{k,\varepsilon})^*(y'_{\ell'}).
  \end{align*}
  The case where $\ell$ or $\ell'$ are equal to $k$ is immediate and we omit the details.

  For $\tau_{\cA,\bfr}^{k,\varepsilon}$, we only check the brackets of $x'_k$ and $x'_j$ for $j\ne k$:
  \begin{align*}
    \{(\tau_{\cA,\bfr}^{k,\varepsilon})^*(x'_k),(\tau_{\cA,\bfr}^{k,\varepsilon})^*(x'_j)\}_\cA
    &=\left\{x_k^{-1}\prod\limits_{i=1}^m x_i^{[-\varepsilon B_{ik}r_k]_+},x_j\right\}_\cA\\
    &=\left(-\Omega_{kj}+\sum_{i=1}^m [-\varepsilon B_{ik}r_k]_+\Omega_{ij}\right) \left(x_k^{-1} \prod\limits_{i=1}^m x_i^{[-\varepsilon B_{ik}r_k]_+}\right)x_j\\
    &=\Omega'_{kj} (\tau_{\cA,\bfr}^{k,\varepsilon})^*(x'_k) (\tau_{\cA,\bfr}^{k,\varepsilon})^*(x'_j).
  \end{align*}
\end{proof}

Since the mutation of $D$-compatible pairs is an involution, we obtain analogous Poisson maps from $L^\times_{\cX',\bfr}$ to $L^\times_{\cX,\bfr}$ and from $L^\times_{\cA',\bfr}$ to $L^\times_{\cA,\bfr}$ which, by a slight abuse of notation, we denote by the same symbols $\tau_{\cX,\bfr}^{k,\varepsilon}$ and $\tau_{\cA,\bfr}^{k,\varepsilon}$ respectively.
It is important to note that the maps $\tau_{\cX,\bfr}^{k,\varepsilon}$ and $\tau_{\cA,\bfr}^{k,\varepsilon}$ are not independent of the sign $\varepsilon$ and, moreover, that they are not involutions, i.e.\ $(\tau_{\cX,\bfr}^{k,\varepsilon})^2$ and $(\tau_{\cA,\bfr}^{k,\varepsilon})^2$ do not give identity maps on $L^\times_{\cX,\bfr}$ nor on $L^\times_{\cA,\bfr}$.
However, an easy calculation shows that $\tau_{\cX,\bfr}^{k,\varepsilon}\tau_{\cX,\bfr}^{k,-\varepsilon}$ and $\tau_{\cA,\bfr}^{k,\varepsilon}\tau_{\cA,\bfr}^{k,-\varepsilon}$ will be identity maps for either choice of sign $\varepsilon$.

For $1\le k\le n$ and $\varepsilon\in\{\pm1\}$, define the \emph{cluster mutations in direction $k$} by 
\[\mu_{\cX,\bfr}^{k,\varepsilon}:=\tau_{\cX,\bfr}^{k,\varepsilon}\circ\varphi_{\cX,\bfr}^1:L^\times_{\cX,\bfr}\to L^\times_{\cX',\bfr}\qquad\text{ and }\qquad\mu_{\cA,\bfr}^{k,\varepsilon}:=\tau_{\cA,\bfr}^{k,\varepsilon}\circ\varphi_{\cA,\bfr}^1:L^\times_{\cA,\bfr}\to L^\times_{\cA',\bfr}.\]
\begin{corollary}
  \label{cor:cluster mutation}
  For $1\le k\le n$ and $\varepsilon\in\{\pm1\}$, the cluster mutations provide Poisson morphisms $\mu_{\cX,\bfr}^{k,\varepsilon}:L^\times_{\cX,\bfr}\to L^\times_{\cX',\bfr}$ and $\mu_{\cA,\bfr}^{k,\varepsilon}:L^\times_{\cA,\bfr}\to L^\times_{\cA',\bfr}$ given on coordinates by
  \begin{align}
    \label{eq:X mutation}
    (\mu_{\cX,\bfr}^{k,\varepsilon})^*(y'_\ell)&=\begin{cases} y_k^{-1} & \text{if $\ell=k$;}\\ y_\ell y_k^{[\varepsilon r_kB_{k\ell}]_+} Z_k^\circ(y_k^\varepsilon)^{-B_{k\ell}} & \text{if $\ell\ne k$;}\end{cases}\\
    \label{eq:A mutation}
    (\mu_{\cA,\bfr}^{k,\varepsilon})^*(x'_j)&=\begin{cases} x_k^{-1}\left(\prod\limits_{i=1}^m x_i^{[-\varepsilon B_{ik}r_k]_+}\right)Z_k^\circ(\hat y_k^\varepsilon) & \text{if $j=k$;}\\ x_j & \text{if $j\ne k$;}\end{cases}\\
    \nonumber (\mu_{-,\bfr}^{k,\varepsilon})^*(z'_{\ell,j})
    &=\begin{cases}
      z^*_{k,j} & \text{if $\ell=k$;}\\
      z_{\ell,j} & \text{if $\ell\ne k$.}
    \end{cases}
  \end{align}
  Moreover, the cluster mutations $\mu_{\cX,\bfr}^{k,\varepsilon}$ and $\mu_{\cA,\bfr}^{k,\varepsilon}$ are involutions which do not depend on the choice of sign $\varepsilon$.
\end{corollary}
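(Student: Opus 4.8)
The plan is as follows. The Poisson-morphism claim is immediate: each $\mu_{\cX,\bfr}^{k,\varepsilon}=\tau_{\cX,\bfr}^{k,\varepsilon}\circ\varphi_{\cX,\bfr}^1$ is a composite of the Poisson map $\tau_{\cX,\bfr}^{k,\varepsilon}$ from Lemma~\ref{le:tropical cluster transformations} with the Poisson automorphism $\varphi_{\cX,\bfr}^1$ from Corollary~\ref{cor:time-one flows}, and likewise for $\cA$. Everything else is a bookkeeping computation on coordinate functions whose only real inputs are two elementary identities: the coefficient-reversal identity $Z_k^*(u)=u^{r_k}Z_k(u^{-1})$ (equivalently $Z_k^*(u^{-1})=u^{-r_k}Z_k(u)$, together with $(Z_k^*)^*=Z_k$ and $(j^*)^*=j$), and the tropical identity $[a]_+-[-a]_+=a$. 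These are precisely the elementary shadows of the dilogarithm five-term relation underlying the cluster case of Remark~\ref{rem:cluster case}.

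First I would read off the coordinate formulas from $(\mu_{\cX,\bfr}^{k,\varepsilon})^*=(\varphi_{\cX,\bfr}^1)^*\circ(\tau_{\cX,\bfr}^{k,\varepsilon})^*$. For $\ell=k$ one has $(\tau_{\cX,\bfr}^{k,\varepsilon})^*(y'_k)=y_k^{-1}$, which is fixed by the flow because $B_{kk}=0$; for $\ell\ne k$ one has $(\tau_{\cX,\bfr}^{k,\varepsilon})^*(y'_\ell)=y_\ell y_k^{[\varepsilon r_kB_{k\ell}]_+}$, and applying $(\varphi_{\cX,\bfr}^1)^*$ multiplies the $y_\ell$-factor by $Z_k^\circ(y_k^\varepsilon)^{-B_{k\ell}}$ and fixes $y_k$, yielding \eqref{eq:X mutation}; the $z$-coordinates are fixed by the flow and undergo the star-involution in slot $k$ under $\tau_{\cX,\bfr}^{k,\varepsilon}$. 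The $\cA$-computation is formally the same after replacing $y_\ell$ by $x_j$ and $y_k^\varepsilon$ by $\hat y_k^\varepsilon$, using that $\{\hat y_k,x_j\}_\cA$ is supported at $j=k$ (Lemma~\ref{le:hamiltonian dynamics}); here $B_{kk}=0$ again ensures the flow does not feed the monomial $\prod_i x_i^{[-\varepsilon B_{ik}r_k]_+}$ appearing in $(\mu_{\cA,\bfr}^{k,\varepsilon})^*(x'_k)$ back into $x'_k$.

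For sign-independence I would substitute $\varepsilon=-1$ into \eqref{eq:X mutation} and use $Z_k^*(y_k^{-1})=y_k^{-r_k}Z_k(y_k)$: the extra factor $y_k^{-r_k}$ raised to $-B_{k\ell}$ merges with $y_k^{[-r_kB_{k\ell}]_+}$ via $[-a]_++a=[a]_+$ (with $a=r_kB_{k\ell}$) to reconstitute $y_k^{[r_kB_{k\ell}]_+}Z_k(y_k)^{-B_{k\ell}}$, i.e.\ the $\varepsilon=+1$ formula. The analogous cancellation on the $\cA$-side uses $\hat y_k^{-r_k}=\prod_i x_i^{-r_kB_{ik}}$ together with $[a]_+-a=[-a]_+$ for $a=r_kB_{ik}$, while the $z$-coordinate rule manifestly does not see $\varepsilon$. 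This justifies writing $\mu_{\cX,\bfr}^k$ and $\mu_{\cA,\bfr}^k$.

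Finally, for the involution I would compute $\mu_{\cX',\bfr}^{k,\varepsilon}\circ\mu_{\cX,\bfr}^{k,\varepsilon}$ on coordinates, which is legitimately a self-map of $L^\times_{\cX,\bfr}$ because $\mu_{\bfr,k}(\tilde B',\Omega')=(\tilde B,\Omega)$ on the nose (and by the previous paragraph the sign on the return map is immaterial); the return map is given by \eqref{eq:X mutation} for the pair $(\tilde B',\Omega')$, for which $B'_{k\ell}=-B_{k\ell}$ by \eqref{eq:matrix mutation} and $(\mu_{\cX,\bfr}^{k,\varepsilon})^*(Z'_k(u))=Z_k^*(u)$ since $z'_{k,j}\mapsto z^*_{k,j}$. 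The case $\ell=k$ is $(y_k^{-1})^{-1}=y_k$ and the $z$-coordinates are $(j^*)^*=j$; for $\ell\ne k$ the one nontrivial ingredient beyond bookkeeping is the transformation rule
\[(\mu_{\cX,\bfr}^{k,\varepsilon})^*\big((Z'_k)^\circ((y'_k)^\varepsilon)\big)=y_k^{-\varepsilon r_k}\,Z_k^\circ(y_k^\varepsilon),\]
which follows from the reversal identity and $(y'_k)^\varepsilon\mapsto y_k^{-\varepsilon}$ (checking $\varepsilon=\pm1$ separately). Plugging this in, the $Z_k^\circ$-powers cancel and the exponent of $y_k$ collapses to $0$ by $[a]_+-[-a]_+=a$ with $a=\varepsilon r_kB_{k\ell}$, leaving $y_\ell$. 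The $\cA$-version is identical with $y_k$ replaced by $\hat y_k$, using $B_{kk}=0$ to get $(\mu_{\cA,\bfr}^{k,\varepsilon})^*(\hat y'_k)=\hat y_k^{-1}$ and then $\hat y_k=\prod_i x_i^{B_{ik}}$ to absorb the monomial prefactor of $(\mu^{k,\varepsilon}_{\cA',\bfr})^*(x''_k)$. I expect the only genuinely delicate step to be this transformation rule for the mutation polynomial under the return map, together with keeping the signs $\varepsilon$ and the composition order of pullbacks straight throughout the calculation.
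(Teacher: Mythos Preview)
Your proof is correct and follows essentially the same approach as the paper: the coordinate formulas come from composing Corollary~\ref{cor:time-one flows} with Lemma~\ref{le:tropical cluster transformations}, and the sign-independence and involution claims reduce to the identities $B'_{k\ell}=-B_{k\ell}$, $B'_{ik}=-B_{ik}$, and $[a]_+-[-a]_+=a$. You have filled in considerably more detail than the paper does---in particular, making explicit the role of the reversal identity $Z_k^*(u)=u^{r_k}Z_k(u^{-1})$, which the paper's terse proof leaves implicit but which is indeed needed in the general $r_k>1$ case.
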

\begin{proof}
  The first claim immediately follows by combining Corollary~\ref{cor:time-one flows} and Lemma~\ref{le:tropical cluster transformations}.
  The final claim is a consequence of the identities $B'_{k\ell}=-B_{k\ell}$, $B'_{ik}=-B_{ik}$, and $B_{ik}=[B_{ik}]_+-[-B_{ik}]_+$.
\end{proof}

\begin{remark}
  \label{rem:extended cluster charts}
  While we have restricted to considering $L^\times_{\cX,\bfr}$ and $L^\times_{\cA,\bfr}$ to motivate the Hamiltonian natures of the mutations $\mu_{\cX,\bfr}^{k,\varepsilon}$ and~$\mu_{\cA,\bfr}^{k,\varepsilon}$, the cluster mutations \eqref{eq:X mutation} and \eqref{eq:A mutation} are also well-defined on open dense subsets of $L_{\cX,\bfr}$ and $L_{\cA,\bfr}$.
  More precisely, the map $\mu_{\cX,\bfr}^{k,\varepsilon}$ is not defined on the locus where $y_k=0$ nor where $Z_k^\circ(y_k^\varepsilon)=0$ if $B_{k\ell}>0$ for some $\ell$ and that the map $\mu_{\cA,\bfr}^{k,\varepsilon}$ is not defined on the locus where $x_k=0$ nor where $\left(\prod\limits_{i=1}^m x_i^{[-\varepsilon B_{ik}r_k]_+}\right)Z_k^\circ(\hat y_k^\varepsilon)=0$.
  This holds equally well for the \emph{automorphism parts} $\varphi_{\cX,\bfr}^1$, $\varphi_{\cA,\bfr}^1$ and the \emph{tropical parts} $\tau_{\cX,\bfr}^{k,\varepsilon}$, $\tau_{\cA,\bfr}^{k,\varepsilon}$ of the cluster mutations with similar restrictions on the domains of definition.

  Similar considerations can be made when $L_\cX=\CC^n$, $L_\cA=\CC^m$, and $Z_k(u)\in\CC[u]$ is a monic polynomial with $Z_k(0)=1$ in which case analogous restrictions as above must be imposed, even when considering mutations restricted to $L^\times_\cX=\CC_\times^n$ or $L^\times_\cA=\CC_\times^m$.
  In the cluster case (c.f.\ Remark~\ref{rem:cluster case}), mutations for $\bar L^\times_\cX=\RR_{\ge0}^n$ have also been considered \cite{FG16} under the name ``special completion'' for $\cX$-varieties.
  See also \cite{BFMMNC} for similar considerations using cluster charts $L_\cX=\CC^n$.
\end{remark}

The various Poisson maps are summarized in the following commutative diagram.
\begin{equation} 
  \label{eq:PoisCD}
  \xymatrix{
    L_{\cA,\bfr} \ar[d]^{\rho} \ar[r]_{\varphi_{\cA,\bfr}} \ar@/^1pc/[rr]|-{\mu_{\cA,\bfr}} & L_{\cA,\bfr} \ar[d]^{\rho} \ar[r]_{\tau_{\cA,\bfr}} & L_{\cA',\bfr} \ar[d]^{\rho} \\
    L_{\cX,\bfr} \ar[r]^{\varphi_{\cX,\bfr}} \ar@/_1pc/[rr]|-{\mu_{\cX,\bfr}} & L_{\cX,\bfr} \ar[r]^{\tau_{\cX,\bfr}} & L_{\cX',\bfr}
  }
\end{equation}
\bigskip

To record the iteration of mutations, we introduce the $n$-regular labeled rooted tree $\TT_n$ with root vertex $t_0$ and with the $n$ edges emanating from each vertex labeled by the set $\{1,\ldots,n\}$.
In particular, each vertex $t\in\TT_n$ is uniquely determined by a sequence of indices specifying the edge labels along the unique path from $t_0$ to~$t$.

Fix an initial $m\times n$ exchange matrix $\tilde B_{t_0}=(B_{ij;t_0})$ and assign exchange matrices $\tilde B_t=(B_{ij;t})$ to the vertices $t\in\TT_n$ so that $\tilde B_{t'}=\mu_{\bfr,k}\tilde B_t$ whenever $t$ and $t'$ are joined by an edge labeled by $k$.
The collection $\{\tilde B_t\}_{t\in\TT_n}$ is called the \emph{mutation pattern} generated by $\tilde B_{t_0}$ and any two exchange matrices $\tilde B_t$, $\tilde B_{t'}$ for $t,t'\in\TT_n$ are said to be \emph{mutation equivalent}.
Given a skew-symmetric $m\times m$ matrix $\Omega_{t_0}=(\Omega_{ij;t_0})$ which is $D$-compatible with $\tilde B_{t_0}$, we define matrices $\Omega_t=(\Omega_{ij;t})$ compatible with $\tilde B_t$ by iterating mutations as above.

It will be convenient to make particular choices of the signs $\varepsilon$ as we perform sequences of mutations for the cluster charts $L^\times_{\cX,\bfr}$ and $L^\times_{\cA,\bfr}$.
To specify these signs, we observe that the mutation pattern gives rise to the following combinatorial construction.
\begin{definition}
  \label{def:tropical signs}
  Given an $n\times n$ skew-symmetrizable matrix $B$, let $\tilde B_{prin,t_0}$ denote the $2n\times n$ exchange matrix with principal submatrix $B$ and lower $n\times n$ submatrix given by the $n\times n$ identity matrix $I_n$.
  Then, given $\tilde B_{prin,t}$ mutation equivalent to $\tilde B_{prin,t_0}$, the lower $n\times n$ submatrix $C_t:=(C_{ij;t})$ of $\tilde B_{prin,t}$ transforms as follows when $t$ and $t'$ are joined by an edge labeled $k$ (note that either choice of sign $\varepsilon$ below will produce the same result):
  \begin{equation}
    \label{eq:c-matrix mutation 1}
    C_{ij;t'}=
    \begin{cases}
      -C_{ij;t} & \text{if $j=k$;}\\
      C_{ij;t}+[-\varepsilon C_{ik;t} r_k]_+B_{kj;t}+C_{ik;t}[\varepsilon r_kB_{kj;t}]_+ & \text{if $j\ne k$.}
    \end{cases}
  \end{equation}
  It is important to observe that this is a piecewise-linear map on the columns of $C_t$.

  The $C$-matrices admit the following remarkable \emph{sign-coherence} property (c.f.\ \cite{FZ07,NZ12,GHKK14,NR16}): 
  \begin{itemize}
    \item each column of $C_t$, known as a \emph{$\bfc$-vector}, has either all non-negative entries or all non-positive entries.
  \end{itemize}
  Thus each exchange matrix $\tilde B_{prin,t}$ mutation equivalent to $\tilde B_{prin,t_0}$ admits a collection of \emph{tropical signs} $\varepsilon_{\bfr,k;t}\in\{\pm1\}$, where $\varepsilon_{\bfr,k;t}=1$ if the entries in the $k$-th column of $C_t$ are non-negative and $\varepsilon_{\bfr,k;t}=-1$ if the entries in the $k$-th column of $C_t$ are non-positive. 
  Taking the tropical sign in the mutation formula \eqref{eq:c-matrix mutation 1} for $\bfc$-vector the transformation gives the following simplified mutation rule:
  \begin{equation}
    \label{eq:c-matrix mutation 2}
    C_{ij;t'}=
    \begin{cases}
      -C_{ij;t} & \text{if $j=k$;}\\
      C_{ij;t}+C_{ik;t}[\varepsilon_{\bfr,k;t} r_kB_{kj;t}]_+ & \text{if $j\ne k$.}
    \end{cases}
  \end{equation}
  Observe that the mutation rule has become an honestly linear map on the columns of $C_t$ once we choose to mutate according to the tropical sign.

  Given an arbitrary initial exchange matrix $\tilde B_{t_0}$, we may construct a corresponding principalized exchange matrix $\tilde B_{prin,t_0}$ from the principal submatrix $B$ of $\tilde B_{t_0}$.
  Then for each vertex $t\in\TT_n$, we associate the same tropical signs $\varepsilon_{\bfr,k;t}$ as above to the columns of each exchange matrix $\tilde B_t$ mutation equivalent of $\tilde B_{t_0}$.
\end{definition}

We associate a Poisson space $L^\times_{\cX;t}$ isomorphic to either $\RR_\times^n$ or $\CC_\times^n$ and a Poisson space $L^\times_{\cA;t}$ isomorphic to either $\RR_\times^m$ or $\CC_\times^m$ to each vertex $t\in\TT_n$.
That is, there is a system of coordinates $\bfy_t=(y_{1;t},\ldots,y_{n;t})$ on $L^\times_{\cX;t}$ and a system of coordinates $\bfx_t=(x_{1;t},\ldots,x_{m;t})$ on $L^\times_{\cA;t}$ satisfying
\begin{equation*}
  \{y_{k;t},y_{\ell;t}\}_\cX=d_kB_{k\ell;t}y_{k;t}y_{\ell;t}\qquad\text{and}\qquad\{x_{i;t},x_{j;t}\}_\cA=\Omega_{ij;t}x_{i;t}x_{j;t}.
\end{equation*}
As above, we also introduce the Poisson spaces $L^\times_{\cX,\bfr;t}$ isomorphic to either $\RR_\times^n\times\prod_{\ell=1}^n \RR^{r_\ell-1}$ or to $\CC_\times^n\times\prod_{\ell=1}^n \CC^{r_\ell-1}$ and $L^\times_{\cA,\bfr;t}$ isomorphic to either $\RR_\times^m\times\prod_{\ell=1}^n \RR^{r_\ell-1}$ or to $\CC_\times^m\times\prod_{\ell=1}^n \CC^{r_\ell-1}$.
Write $z_{\ell,j;t}$ for coordinates on the $L_\bfr$ components and $Z_{\ell;t}(u)$ for the associated exchange polynomials.
\begin{definition}
  \label{def:cluster varieties}
  Fix an initial $m\times n$ exchange matrix $\tilde B_{t_0}$ and an initial skew-symmetric $m\times m$ matrix~$\Omega_{t_0}$ which is $D$-compatible with~$\tilde B_{t_0}$.
  Let $\bfr=(r_1,\ldots,r_n)$ be a sequence of positive integers.
  The \emph{(generalized) cluster varieties} $\cX_\bfr=\cX_\bfr(\tilde B_{t_0},D)$ and $\cA_\bfr=\cA_\bfr(\tilde B_{t_0},\Omega_{t_0})$ are obtained by gluing the cluster charts $L^\times_{\cX,\bfr;t}$, $L^\times_{\cX,\bfr;t'}$ and $L^\times_{\cA,\bfr;t}$, $L^\times_{\cA,\bfr;t'}$ respectively, for $t,t'\in\TT_n$ joined by an edge labeled $k$, along the cluster mutations $\mu_{\cX,\bfr}^{k,\varepsilon}$ and $\mu_{\cA,\bfr}^{k,\varepsilon}$, where $\varepsilon=\varepsilon_{\bfr,k;t}$.
  That is, 
  \begin{equation*}
    \cX_\bfr=\bigcup_{t\in\TT_n} L^\times_{\cX,\bfr;t}\qquad\text{ and }\qquad\cA_\bfr=\bigcup_{t\in\TT_n} L^\times_{\cA,\bfr;t}
  \end{equation*}
  with cluster charts glued by the mutations as above.
\end{definition}
\begin{remark}
  \label{rem:cluster case 2}
  When $r_\ell=1$ for all $\ell$, the construction above gives rise to the ordinary cluster varieties \cite{FZ02,FG09a}.
  This justifies Remark~\ref{rem:cluster case}.
\end{remark}

For both the cluster $\cA$-varieties \cite{GSV10} and $\cX$-varieties \cite{FG09c}, a Poisson structure is \emph{compatible} with mutations if the coordinates of each cluster chart are log-canonical (c.f.\ equation \eqref{eq:brackets}).
\begin{theorem}
  \cite{GSV10}
  The Poisson structures on the cluster charts $L^\times_{\cX,\bfr;t}$ and $L^\times_{\cA,\bfr;t}$ determined by the $D$-compatible pairs $(\tilde B_t,\Omega_t)$ glue to give global Poisson structures on the cluster varieties $\cX_\bfr$ and $\cA_\bfr$ which are log-canonical over each cluster chart.
  Moreover, the Poisson morphisms $\rho_t:L^\times_{\cA,\bfr;t}\to L^\times_{\cX,\bfr;t}$ glue to give a Poisson morphism $\rho:\cA_\bfr\to\cX_\bfr$.
\end{theorem}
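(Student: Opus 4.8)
The plan is to derive both assertions formally from the single-mutation results already established, exploiting that the charts are indexed by a tree so that no cocycle condition intervenes.

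\textbf{Gluing the Poisson structures.} First I would recall that, by Definition~\ref{def:cluster varieties}, the charts $L^\times_{\cX,\bfr;t}$ and $L^\times_{\cX,\bfr;t'}$ attached to vertices $t,t'$ joined by an edge labeled $k$ are identified, over the open dense loci where the map is an isomorphism, via $\mu^{k,\varepsilon_{\bfr,k;t}}_{\cX,\bfr}$. By Corollary~\ref{cor:cluster mutation} this transition map is a Poisson morphism for the log-canonical structures attached to $(\tilde B_t,\Omega_t)$ and $(\tilde B_{t'},\Omega_{t'})=\mu_{\bfr,k}(\tilde B_t,\Omega_t)$, and it is independent of the sign and an involution, so the edge-gluing datum is symmetric and unambiguous. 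Hence the locally defined bivectors $\pi_{\cX;t}$ agree on overlaps; since $\TT_n$ has no cycles, transitivity of the identifications is automatic and they patch to a single bivector $\pi_\cX\in\cT^2_{\cX_\bfr}$. Being a Poisson bivector is a local condition, so $\pi_\cX$ is Poisson, and by construction it restricts on each chart to the log-canonical bracket \eqref{eq:brackets}. The same argument verbatim, with the charts $L^\times_{\cA,\bfr;t}$, the pairs $(\tilde B_t,\Omega_t)$, and the mutations $\mu^{k,\varepsilon}_{\cA,\bfr}$, yields the global Poisson structure $\pi_\cA$ on $\cA_\bfr$.

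\textbf{Gluing the ensemble map.} Next I would glue the Poisson maps $\rho_t$ of \eqref{eq:ensemble}. The key input is the commutativity of the squares in \eqref{eq:PoisCD}, i.e.\ that $\rho_{t'}\circ\mu^{k,\varepsilon}_{\cA,\bfr}=\mu^{k,\varepsilon}_{\cX,\bfr}\circ\rho_t$ on the common open locus. Decomposing $\mu=\tau\circ\varphi^1$, compatibility with $\varphi^1$ follows from $\rho^*(h^{k,\varepsilon}_{\cX,\bfr})=h^{k,\varepsilon}_{\cA,\bfr}$ and $\rho^*(y_k)=\hat y_k$, so that a Poisson map carries the Hamiltonian flow of $h^{k,\varepsilon}_{\cX,\bfr}$ to that of $h^{k,\varepsilon}_{\cA,\bfr}$; compatibility with $\tau$ and on the $L_\bfr$-coordinates is a direct comparison of the formulas in Lemma~\ref{le:tropical cluster transformations}, using $\hat y'_k=\prod_i (x'_i)^{B'_{ik}}$, the matrix-mutation rule \eqref{eq:matrix mutation}, and the definition of $E_{\bfr,k,\varepsilon}$. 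With this single-step intertwining in hand, the family $\{\rho_t\}_{t\in\TT_n}$ descends, again because $\TT_n$ is a tree, to a well-defined map $\rho:\cA_\bfr\to\cX_\bfr$; each $\rho_t$ is Poisson by \eqref{eq:ensemble}, and being Poisson is local, so $\rho$ is a Poisson morphism.

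\textbf{Expected obstacle.} The only genuine computation is the verification of \eqref{eq:PoisCD}, namely that the ensemble map commutes with a single generalized mutation; the delicate bookkeeping there is matching the sign-dependent monomial prefactors and the exchange of $Z_k$ with $Z_k^*$ on the two sides. Everything else is the formal local-to-global passage, which is painless because the index set is a tree and both ``Poisson'' and ``log-canonical on each chart'' are local properties. For $r_\ell=1$ this recovers the statement of \cite{GSV10}, and the argument is insensitive to the generalization.
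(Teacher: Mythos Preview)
Your argument is correct and is precisely the natural one given the tools the paper has set up: Corollary~\ref{cor:cluster mutation} supplies that each single mutation is a Poisson isomorphism, the tree structure of $\TT_n$ removes any cocycle obstruction, and the commutative diagram~\eqref{eq:PoisCD} (whose $\tau$-square you correctly identify as the only nontrivial check) shows the ensemble maps are compatible with the gluing.

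Note, however, that the paper does not actually prove this theorem: it is stated with a citation to \cite{GSV10} and no proof is given in the text. So there is no ``paper's own proof'' to compare against; you have simply supplied the argument the paper omits. Your proof is the expected one and matches how the result is typically derived in the literature from the single-step compatibility.
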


The following result is the famous Laurent phenomenon for (generalized) cluster algebras.
\begin{theorem}
  \cite{FZ02,CS14}
  Each cluster coordinate $x_{i;t}$ on the cluster chart $L^\times_{\cA,\bfr;t}$ determines a global function on $\cA_\bfr$ which is given by a Laurent polynomial in the coordinates of any other cluster chart $L^\times_{\cA,\bfr;t'}$.
\end{theorem}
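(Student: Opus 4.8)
The plan is to recognize this statement as the (generalized) Laurent phenomenon of Fomin--Zelevinsky \cite{FZ02} and Chekhov--Shapiro \cite{CS14}; the real content of a proof is then to repackage the gluing data of Definition~\ref{def:cluster varieties} into their framework. First I would extract an exchange relation from \eqref{eq:A mutation}. When $t,t'\in\TT_n$ are joined by an edge labeled $k$ and $\varepsilon\in\{\pm1\}$ is any sign (the cluster mutation being independent of $\varepsilon$ by Corollary~\ref{cor:cluster mutation}), the only coordinate that changes under $\mu_{\cA,\bfr}^{k,\varepsilon}$ is $x_{k;t}$, and using $[-\varepsilon B_{ik;t}r_k]_+=r_k[-\varepsilon B_{ik;t}]_+$ together with $\hat y_{k;t}=\prod_i x_{i;t}^{B_{ik;t}}$ the transition becomes
\[
x_{k;t}\,x_{k;t'}=P_{k;t}:=\Big(\prod_i x_{i;t}^{\,r_k[-\varepsilon B_{ik;t}]_+}\Big)\,Z_{k;t}^\circ\!\Big(\prod_i x_{i;t}^{\,\varepsilon B_{ik;t}}\Big).
\]
Because $B_{kk;t}=0$ and $Z_{k;t}^\circ$ is monic of degree $r_k$ with constant term $1$, the right-hand side $P_{k;t}$ is a genuine polynomial in the coordinates $\{x_{j;t}:j\ne k\}$ with coefficients in $\ZZ[z_{\bullet;t}]$ and, in particular, does not involve $x_{k;t}$ (and, as mutation changes only the $k$-th coordinate, it coincides with the corresponding polynomial $P_{k;t'}$ in the coordinates of $t'$). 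This is exactly the form of an exchange relation in a generalized cluster algebra, with the Casimir coordinates $z_{\ell,j;t}$ playing the role of the frozen exchange-polynomial coefficients and their star-involution $z'_{k,j}=z^*_{k,j}$ the prescribed mutation of those coefficients; hence $\{L^\times_{\cA,\bfr;t}\}_{t\in\TT_n}$ together with its gluing maps is an instance of a generalized cluster algebra in the sense of \cite{CS14}, specializing to an ordinary cluster algebra \cite{FZ02} when every $r_\ell=1$ (c.f.\ Remark~\ref{rem:cluster case 2}), and the assertion is precisely the Laurent phenomenon for such algebras.

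To run the argument rather than merely cite it, I would fix $t$ and induct on the distance $d(t,t')$ in $\TT_n$ that $x_{i;t}$, transported along the composite transition map, lies in $\ZZ[z_{\bullet;t'}][\bfx_{t'}^{\pm1}]$. The cases $d=0$ and $d=1$ are immediate, the latter from the exchange relation. For the inductive step, let $t''$ be the neighbour of $t'$ on the geodesic toward $t$, joined to $t'$ by an edge labeled $k$; substituting $x_{k;t''}=P_{k;t'}/x_{k;t'}$ (with $P_{k;t'}$ a polynomial in $\{x_{j;t'}:j\ne k\}$) into the Laurent polynomial provided by the inductive hypothesis exhibits $x_{i;t}$ as an element of $\ZZ[z_{\bullet;t'}][\bfx_{t'}^{\pm1}]$ localized at $P_{k;t'}$. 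The theorem amounts to the claim that no power of $P_{k;t'}$ survives in the denominator. This is the crux of the matter --- the Fomin--Zelevinsky ``Caterpillar Lemma'' --- and I expect it to be the main obstacle to a short self-contained proof: one compares the two expressions for $x_{i;t}$ obtained along mutation sequences that differ by a square (or a commutation) of mutations near $t'$, and concludes using the polynomiality of the exchange polynomials just recorded together with a coprimality property of the exchange polynomials appearing in that square. In the generalized setting one must additionally track the exchange polynomials $Z_{k;t}$ and their reversals $Z^*_{k;t}$ through mutations --- exactly the bookkeeping encoded by the coordinates $z_{\ell,j;t}$ and the star-involution --- after which the induction of \cite{CS14} goes through without change.

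Finally, globalness of $x_{i;t}$ on $\cA_\bfr$ is automatic once Laurentness is known: each gluing map $\mu_{\cA,\bfr}^{k,\varepsilon}$ is a birational isomorphism of cluster charts (its inverse is again a cluster mutation, by Corollary~\ref{cor:cluster mutation}), so $x_{i;t}$, which is regular on $L^\times_{\cA,\bfr;t}$, extends to a rational function on the glued space $\cA_\bfr$, and the Laurent property established above shows this rational function is in fact regular on every cluster chart $L^\times_{\cA,\bfr;t'}$, hence is a global function on all of $\cA_\bfr$.
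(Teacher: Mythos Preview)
The paper does not prove this theorem at all: it is stated with the citations \cite{FZ02,CS14} and immediately followed by the next sentence (``This result can be made more precise as follows\ldots''), so there is no proof in the paper to compare against. Your proposal correctly identifies the statement as the (generalized) Laurent phenomenon, correctly extracts from \eqref{eq:A mutation} the exchange relation $x_{k;t}x_{k;t'}=P_{k;t}$ with $P_{k;t}$ a polynomial in $\{x_{j;t}:j\ne k\}$, and correctly observes that the gluing data of Definition~\ref{def:cluster varieties} package into a generalized cluster algebra in the sense of \cite{CS14}. That reduction is all the paper intends.

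Your sketch of an \emph{ab initio} argument is honest about where the real difficulty lies, but the description of the Caterpillar Lemma is somewhat imprecise. The naive induction you set up --- substitute $x_{k;t''}=P_{k;t'}/x_{k;t'}$ and hope the $P_{k;t'}$ in the denominator cancels --- does not close on its own, and the actual mechanism in \cite{FZ02} is not quite ``comparing two expressions along mutation sequences that differ by a square.'' Rather, one proves Laurentness simultaneously for a carefully chosen finite family of clusters arranged along a caterpillar-shaped subtree, using at each step that the exchange polynomials at adjacent vertices are coprime (as elements of the ambient polynomial ring) and that a certain three-term identity holds among neighbouring exchange polynomials. In the generalized setting of \cite{CS14} the same scaffolding works once the exchange polynomials $P_{k;t}$ are shown to satisfy the analogous coprimality, which follows from the normalization $Z_{k;t}(0)=1$ and monicity. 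If you intend to present a self-contained proof rather than a citation, you would need to state and verify that coprimality and the three-term relation explicitly; as written, the phrase ``after which the induction of \cite{CS14} goes through without change'' is doing all the work.
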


This result can be made more precise as follows, c.f.\ \cite{FZ07,Nak15,NR16}.
Given a pair of vertices $t,t'\in\TT_n$, the composition of mutations from $L^\times_{-,\bfr;t}$ to $L^\times_{-,\bfr;t'}$ along the unique path from $t$ to $t'$ in $\TT_n$ (always taken with respect to the tropical sign from Definition~\ref{def:tropical signs}) gives rise to birational maps
\[\mu_{\cX,\bfr}^{t',t}:L^\times_{\cX,\bfr;t}\to L^\times_{\cX,\bfr;t'}\qquad\text{and}\qquad\mu_{\cA,\bfr}^{t',t}:L^\times_{\cA,\bfr;t}\to L^\times_{\cA,\bfr;t'}.\]
To understand these iterated mutations, we introduce the following combinatorial constructions.

The \emph{$\bfg$-vector} of $x_{j;t}$ is the integer vector $(G_{ij;t})_{i=1}^m\in\ZZ^m$ defined recursively as follows.
We begin with the identity matrix $G_{ij;t_0}=\delta_{ij}$.
Then for vertices $t,t'\in\TT_n$ which are joined by an edge labeled $k$, we compute using the following recursion (recall that we mutate according to the tropical sign $\varepsilon_{\bfr,k;t}$):
\begin{equation}
  \label{eq:g-matrix mutation}
  G_{ij;t'}=
  \begin{cases}
    -G_{ik;t}+\sum\limits_{\ell=1}^m G_{i\ell;t}[-\varepsilon_{\bfr,k;t} B_{\ell k;t} r_k]_+ & \text{if $j=k$;}\\
    G_{ij;t} & \text{if $j\ne k$.}
  \end{cases}
\end{equation}
\begin{remark}
  Since we only perform mutations in directions $k$ with $1\le k\le n$, the definitions immediately imply $G_{ij;t}=\delta_{ij}$ for $n+1\le j\le m$.
\end{remark}
Next we introduce the \emph{$F$-polynomials} $F_{\bfr,j;t}\in\CC[u_1,\ldots,u_n]$ of the cluster variables $x_{j;t}$.
These are defined recursively by $F_{\bfr,j;t_0}=1$ for $1\le j\le m$ and via the following recursion when $t$ and $t'$ are joined in $\TT_n$ by an edge labeled $k$:
\begin{equation}
  \label{eq:F-polynomial mutation}
  F_{\bfr,j;t'}=
  \begin{cases}
    F_{\bfr,k;t}^{-1}\cdot \left(\prod_{\ell=1}^m F_{\bfr,\ell;t}^{[-\varepsilon_{\bfr,k;t} B_{\ell k;t} r_k]_+}\right) \cdot Z_{k;t}^\circ\left(\prod_{\ell=1}^n u_\ell^{\varepsilon_{\bfr,k;t} C_{\ell k;t}} F_{\bfr,\ell;t}^{\varepsilon_{\bfr,k;t} B_{\ell k;t}}\right) & \text{if $j=k$;}\\
    F_{\bfr,j;t} & \text{if $j\ne k$;}
  \end{cases}
\end{equation}
where $Z_{k;t}$ is the exchange polynomial associated to the $L_\bfr$ component of the chart $L_{\cA,\bfr;t}$ and
\[ Z_{k;t}^\circ(u)=\begin{cases} Z_{k;t}(u) & \text{if $\varepsilon_{\bfr,k;t}=+1$;}\\ Z_{k;t}^*(u) & \text{if $\varepsilon_{\bfr,k;t}=-1$.} \end{cases}\]

\begin{remark}
  Again, since we only perform mutations in directions $k$ with $1\le k\le n$, the definitions immediately imply $F_{\bfr,j;t}=1$ for $n+1\le j\le m$.
\end{remark}
As promised, the $\bfc$-vectors, $\bfg$-vectors, and $F$-polynomials completely control the iterated mutations via the following formulas.
\begin{theorem}
  \label{th:separation}
  \cite{FZ07,Nak15,NR16}
  For any vertex $t\in\TT_n$, we have
  \begin{align}
    \label{eq:separation of additions 1}
    \big(\mu_{\cX,\bfr}^{t,t_0}\big)^*(y_{\ell;t})&=\prod_{k=1}^n \left( y_{k;t_0}^{C_{k\ell;t}} F_{\bfr,k;t}(y_{1;t_0},\ldots,y_{n;t_0})^{B_{k\ell;t}} \right);\\
    \label{eq:separation of additions 2}
    \big(\mu_{\cA,\bfr}^{t,t_0}\big)^*(x_{j;t})&=\left(\prod_{i=1}^m x_{i;t_0}^{G_{ij;t}}\right) F_{\bfr,j;t}(\hat y_{1;t_0},\ldots,\hat y_{n;t_0}),\qquad \hat y_{k;t_0}=\prod_{i=1}^m x_{i;t_0}^{B_{ik;t_0}};\\
    \big(\mu_{\cA,\bfr}^{t,t_0}\big)^*(z_{\ell,j;t})&=z^\circ_{\ell,j;t_0},
  \end{align}
  where $z^\circ_{\ell,j;t_0}=z_{\ell,j;t_0}$ if the mutation sequence from $t_0$ to $t$ has an even number of mutations in direction $\ell$ and $z^\circ_{\ell,j;t_0}=z^*_{\ell,j;t_0}$ if this mutation sequence has an odd number of mutations in direction $\ell$.
\end{theorem}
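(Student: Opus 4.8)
The plan is to induct on the graph distance $d(t_0,t)$ in $\TT_n$, following the proof of separation of additions for ordinary cluster algebras in \cite{FZ07} (see also \cite{Nak15,NR16}) and adapting it to the normalization of generalized mutations from Corollary~\ref{cor:cluster mutation}. The base case $t=t_0$ is immediate, since $\mu_{-,\bfr}^{t_0,t_0}$ is the identity, while $C_{k\ell;t_0}=\delta_{k\ell}$, $G_{ij;t_0}=\delta_{ij}$, $F_{\bfr,j;t_0}=1$, and $z^\circ_{\ell,j;t_0}=z_{\ell,j;t_0}$.

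For the inductive step, let $t'\in\TT_n$ be joined to $t$ by an edge labeled $k$ with $d(t_0,t')=d(t_0,t)+1$, and set $\varepsilon=\varepsilon_{\bfr,k;t}$. Since composite mutations are taken along tropical signs, $\mu_{-,\bfr}^{t',t_0}=\mu_{-,\bfr}^{k,\varepsilon}\circ\mu_{-,\bfr}^{t,t_0}$ and hence $\big(\mu_{-,\bfr}^{t',t_0}\big)^*=\big(\mu_{-,\bfr}^{t,t_0}\big)^*\circ\big(\mu_{-,\bfr}^{k,\varepsilon}\big)^*$. The idea is to peel off the last factor using the explicit single-step formulas \eqref{eq:X mutation}, \eqref{eq:A mutation} and the $z$-rule of Corollary~\ref{cor:cluster mutation}, then substitute the induction hypothesis for $\big(\mu_{-,\bfr}^{t,t_0}\big)^*$. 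Several pieces are then immediate: the $z$-identity, since the parity count of mutations in direction $\ell$ changes by one exactly when $\ell=k$ and the star-involution is an involution; the case $\ell=k$ of \eqref{eq:separation of additions 1}, since $\big(\mu_{\cX,\bfr}^{k,\varepsilon}\big)^*(y_{k;t'})=y_{k;t}^{-1}$ while $C_{pk;t'}=-C_{pk;t}$, $B_{pk;t'}=-B_{pk;t}$, and $B_{kk;t}=0$ (so the only polynomial among the $F_{\bfr,p;t'}$ differing from $F_{\bfr,p;t}$, namely $F_{\bfr,k;t'}$, appears to the zeroth power); and the case $j\ne k$ of \eqref{eq:separation of additions 2}, since $\big(\mu_{\cA,\bfr}^{k,\varepsilon}\big)^*(x_{j;t'})=x_{j;t}$ and $G_{ij;t'}=G_{ij;t}$, $F_{\bfr,j;t'}=F_{\bfr,j;t}$.

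The substance lies in the remaining two cases. For $\ell\ne k$ in \eqref{eq:separation of additions 1}, one applies $\big(\mu_{\cX,\bfr}^{t,t_0}\big)^*$ to $y_{\ell;t}\,y_{k;t}^{[\varepsilon r_kB_{k\ell;t}]_+}\,Z_{k;t}^\circ(y_{k;t}^\varepsilon)^{-B_{k\ell;t}}$, substitutes the induction hypothesis, and matches the outcome with the target $\prod_p y_{p;t_0}^{C_{p\ell;t'}}F_{\bfr,p;t'}(\bfy_{t_0})^{B_{p\ell;t'}}$ by comparing monomial and $F$-polynomial parts separately. The monomial exponents are governed by the linearized recursion \eqref{eq:c-matrix mutation 2} (which gives $C_{p\ell;t'}=C_{p\ell;t}+C_{pk;t}[\varepsilon r_kB_{k\ell;t}]_+$), whereas the $F$-polynomial parts are reconciled using \eqref{eq:matrix mutation} and the $F$-polynomial recursion \eqref{eq:F-polynomial mutation}, the key point being that $\big(\mu_{\cX,\bfr}^{t,t_0}\big)^*(y_{k;t}^\varepsilon)=\prod_p y_{p;t_0}^{\varepsilon C_{pk;t}}F_{\bfr,p;t}(\bfy_{t_0})^{\varepsilon B_{pk;t}}$ is exactly the element fed into $Z_{k;t}^\circ$ in \eqref{eq:F-polynomial mutation}, with the $p=k$ contributions simplifying because $B_{kk;t}=0$. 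For $j=k$ in \eqref{eq:separation of additions 2}, one pulls back $x_{k;t}^{-1}\big(\prod_i x_{i;t}^{[-\varepsilon B_{ik;t}r_k]_+}\big)\,Z_{k;t}^\circ(\hat y_{k;t}^\varepsilon)$: the exponent of $x_{i';t_0}$ collects to $-G_{i'k;t}+\sum_p G_{i'p;t}[-\varepsilon B_{pk;t}r_k]_+=G_{i'k;t'}$ by \eqref{eq:g-matrix mutation}, and the exchange-polynomial part is handled using the iterated form of diagram~\eqref{eq:PoisCD}, that is $\rho\circ\mu_{\cA,\bfr}^{t,t_0}=\mu_{\cX,\bfr}^{t,t_0}\circ\rho$, which together with the already-established $\cX$-identity gives $\big(\mu_{\cA,\bfr}^{t,t_0}\big)^*(\hat y_{k;t})=\prod_p \hat y_{p;t_0}^{C_{pk;t}}F_{\bfr,p;t}(\hat y_{1;t_0},\ldots,\hat y_{n;t_0})^{B_{pk;t}}$; substituting this and comparing with \eqref{eq:F-polynomial mutation} evaluated at $u_p=\hat y_{p;t_0}$ yields \eqref{eq:separation of additions 2} at $t'$.

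I expect the main obstacle to be ensuring that the monomial exponents and the $F$-polynomial recursion close up simultaneously in the $\cX$-case with $\ell\ne k$. This is precisely where the sign-coherence of the $C$-matrices recorded in Definition~\ref{def:tropical signs} is indispensable: it collapses the piecewise-linear mutation \eqref{eq:c-matrix mutation 1} to the honestly linear rule \eqref{eq:c-matrix mutation 2}, and it keeps the normalization of the tropical part $\tau_{\cX,\bfr}^{k,\varepsilon}$ from Lemma~\ref{le:tropical cluster transformations} consistent with the recursion \eqref{eq:F-polynomial mutation}. Once the $\cX$-identity is established, the $\cA$-identity follows essentially formally via the ensemble map $\rho$; the remaining bookkeeping --- powers of $F$-polynomials indexed by $p=k$ dropping out because of vanishing diagonal entries of $\tilde B$, and the star-involution parity reconciling the coefficients of $Z_{k;t}^\circ$ with the $z$-separation identity --- is routine.
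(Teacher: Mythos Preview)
The paper does not supply its own proof of this theorem; it is stated with attribution to \cite{FZ07,Nak15,NR16} and immediately followed by a remark, so there is no in-paper argument to compare against. Your proposal is the standard inductive proof of separation of additions from those references, correctly adapted to the generalized mutation conventions of Corollary~\ref{cor:cluster mutation}: the base case is trivial, the easy cases ($z$-coordinates, $\ell=k$ in the $\cX$-formula, $j\ne k$ in the $\cA$-formula) are handled by the recursions~\eqref{eq:c-matrix mutation 2}, \eqref{eq:g-matrix mutation}, \eqref{eq:F-polynomial mutation} together with $B_{kk;t}=0$, and the two substantive cases are reduced to matching monomial and $F$-polynomial parts against these same recursions, invoking sign-coherence to make the $\bfc$-vector step linear and the ensemble commutativity~\eqref{eq:PoisCD} to transport the $\cX$-identity into the argument of $Z_{k;t}^\circ$ on the $\cA$-side. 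This is exactly the approach of the cited sources and is correct as outlined.
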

\begin{remark}
  The separation of additions formula in \cite{FZ07} for the cluster variable $\big(\mu_{\cA,\bfr}^{t,t_0}\big)^*(x_{j;t})$ is usually presented with a denominator computed via a tropical evaluation of the $F$-polynomial $F_{\bfr,j;t}(\hat y_{1;t_0},\ldots,\hat y_{n;t_0})$, this is the auxiliary addition that is usually being ``separated'' from the ordinary addition in the numerator.

  We have bypassed this by using $\bfg$-vectors from $\ZZ^m$ rather than the standard convention with $\bfg$-vectors in $\ZZ^n$.
  It is not hard to see by considering $x_{n+1;t_0}, \ldots, x_{m;t_0}$ simply as frozen cluster variables that our ``separation of additions'' formula \eqref{eq:separation of additions 2} reduces to the standard one in the case of geometric coefficients.
\end{remark}

It is often the case that a sequence of cluster mutations is essentially an identity mapping, this is made precise as follows.
\begin{definition}
  \label{def:periodicity}
  Fix an initial $m\times n$ exchange matrix $\tilde B_{t_0}$ and an initial skew-symmetric $m\times m$ matrix~$\Omega_{t_0}$ which is $D$-compatible with~$\tilde B_{t_0}$.
  Let $\bfr=(r_1,\ldots,r_n)$ be a sequence of positive integers.
  A \emph{periodicity of cluster mutations} $(t_1,\ldots,t_w;\sigma)$ is a sequence of vertices $t_1,t_2,\ldots,t_w\in\TT_n$ with $t_i$ joined to $t_{i+1}$ by an edge labeled $k_i$ together with a permutation $\sigma$ of the set $\{1,\ldots,m\}$ fixing each of $n+1,\ldots,m$ so that the following equalities hold:
  \begin{enumerate}
    \item $\big(\mu_{\cX,\bfr}^{t_w,t_1}\big)^*(y_{\ell;t_w})=y_{\sigma(\ell);t_1}$ for $1\le\ell\le n$;
    \item $\big(\mu_{\cA,\bfr}^{t_w,t_1}\big)^*(x_{j;t_w})=x_{\sigma(j);t_1}$ for $1\le j\le m$;
    \item $\big(\mu_{-,\bfr}^{t_w,t_1}\big)^*(z_{\ell,j;t_w})=z_{\sigma(\ell),j;t_1}$ for $1\le\ell\le n$ and $1\le j\le r_\ell-1$;
    \item $\tilde B_{ij;t_1}=\tilde B_{\sigma(i)\sigma(j);t_w}$ for $1\le i,j\le m$;
    \item $\Omega_{ij;t_1}=\Omega_{\sigma(i)\sigma(j);t_w}$ for $1\le i,j\le m$.
  \end{enumerate}
\end{definition}
In other words, a periodicity of cluster mutations determines a gluing of cluster charts which is (up to a permutation) just an identity mapping.
We will need the following important consequence of the separation of additions formulas and the sign-coherence of $\bfc$-vectors.
\begin{corollary}
  \label{cor:periodicity}
  A periodicity of cluster mutations gives the following equalities:
  \begin{enumerate}
    \item $C_{ij;t_1}=C_{i\sigma(j);t_w}$ for $1\le i,j\le m$;
    \item $G_{ij;t_1}=G_{i\sigma(j);t_w}$ for $1\le i,j\le m$;
    \item $F_{\bfr,j;t_1}=F_{\bfr,\sigma(j);t_w}$ for $1\le j\le m$.
  \end{enumerate}
\end{corollary}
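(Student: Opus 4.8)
The plan is to reduce all three identities to a single comparison of two instances of the separation of additions formula (Theorem~\ref{th:separation}), both written with respect to the fixed root $t_0$, and then to extract the three families of invariants one at a time, using that a $\bfg$-vector, a $\bfc$-vector, and an $F$-polynomial are each recoverable from the relevant Laurent expansion in the initial chart. First I would record the cocycle identity $\mu_{-,\bfr}^{t'',t}=\mu_{-,\bfr}^{t'',t'}\circ\mu_{-,\bfr}^{t',t}$ for any $t,t',t''\in\TT_n$; this is immediate from Corollary~\ref{cor:cluster mutation}, since each single mutation $\mu_{-,\bfr}^{k,\varepsilon}$ is an involution independent of the sign, so concatenating the unique path $t\to t'$ with the unique path $t'\to t''$ and cancelling backtracks reproduces the unique path $t\to t''$. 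Taking $(t,t',t'')=(t_0,t_1,t_w)$ and applying the periodicity relations of Definition~\ref{def:periodicity}(1)--(2) then gives the identities of rational functions
\[\big(\mu_{\cX,\bfr}^{t_w,t_0}\big)^*(y_{\ell;t_w})=\big(\mu_{\cX,\bfr}^{t_1,t_0}\big)^*(y_{\sigma(\ell);t_1}),\qquad\big(\mu_{\cA,\bfr}^{t_w,t_0}\big)^*(x_{j;t_w})=\big(\mu_{\cA,\bfr}^{t_1,t_0}\big)^*(x_{\sigma(j);t_1}).\]

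Next I would expand both sides of the $\cA$-identity by \eqref{eq:separation of additions 2}, obtaining an equality of Laurent polynomials in $\bfx_{t_0}$ of the shape $\big(\prod_i x_{i;t_0}^{G_{ij;t_w}}\big)F_{\bfr,j;t_w}(\hat\bfy_{t_0})=\big(\prod_i x_{i;t_0}^{G_{i\sigma(j);t_1}}\big)F_{\bfr,\sigma(j);t_1}(\hat\bfy_{t_0})$, and likewise expand the $\cX$-identity by \eqref{eq:separation of additions 1}. The core point is that such a factorization is unique: the $F$-polynomials have constant term $1$, while the monomials $\hat y_{k;t_0}=\prod_i x_{i;t_0}^{B_{ik;t_0}}$ are multiplicatively independent because the columns of $\tilde B_{t_0}$ are linearly independent (as observed in Section~\ref{sec:mutations}). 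Hence the monomial prefactor $\prod_i x_{i;t_0}^{G_{ij;t_w}}$ is pinned down intrinsically -- it is the unique monomial occurring in the expansion from which every other occurring monomial differs by an element of $\tilde B_{t_0}\ZZ_{\ge0}^n$ -- so its exponent vector must equal $(G_{i\sigma(j);t_1})_i$, and after dividing it out the multiplicative independence of the $\hat y_{k;t_0}$ forces $F_{\bfr,j;t_w}=F_{\bfr,\sigma(j);t_1}$, which is (2) and (3). The same mechanism applied to the $\cX$-identity yields (1): there the $F$-polynomials are coprime to every $y_{k;t_0}$, so the monomial prefactor $\prod_k y_{k;t_0}^{C_{k\ell;t}}$ of $\big(\mu_{\cX,\bfr}^{t,t_0}\big)^*(y_{\ell;t})$ is the unique Laurent monomial whose removal leaves a reduced fraction with numerator and denominator of constant term $1$, and matching the two sides forces $C_{k\ell;t_w}=C_{k\sigma(\ell);t_1}$.

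The cocycle identity and the monomial bookkeeping are routine; the substantive input is the uniqueness in the separation of additions, i.e.\ that the $\bfg$-vector, $\bfc$-vector, and $F$-polynomial of a cluster variable (resp.\ of an $\cX$-coordinate) are determined by its Laurent expansion in the initial seed. I expect this to be the only real obstacle, and I would invoke the grading and sign-coherence arguments of \cite{FZ07} in the ordinary case and \cite{Nak15,NR16} in the generalized case rather than reprove them. The one remaining subtlety is to unwind the conventions of Definition~\ref{def:periodicity} carefully so that it is $\sigma$ rather than $\sigma^{-1}$ that appears on the indices in the final statement.
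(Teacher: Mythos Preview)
Your proposal is correct and follows essentially the same line as the paper's proof: both reduce to the identity $\big(\mu_{\cA,\bfr}^{t_1,t_0}\big)^*(x_{j;t_1})=\big(\mu_{\cA,\bfr}^{t_w,t_0}\big)^*(x_{\sigma(j);t_w})$, expand both sides by separation of additions, and invoke sign-coherence (constant term $1$ for $F$-polynomials) together with linear independence of the columns of $\tilde B_{t_0}$ to separate the $\bfg$-vector monomial from the $F$-polynomial factor, yielding (2) and (3). The only notable difference is in how you recover (1): the paper deduces it from (3) by substituting into the $\cX$-side formula \eqref{eq:separation of additions 1} and cancelling the matched $F$-polynomial factors, whereas you extract the $\bfc$-vector directly from \eqref{eq:separation of additions 1} via the coprimality of each $F_{\bfr,k;t}$ with the $y_{k;t_0}$; both arguments rest on the same constant-term-$1$ input, so neither is materially shorter, and your route has the mild advantage of not needing to match the $F$-polynomial exponents via condition (4) of Definition~\ref{def:periodicity}. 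Your closing remark about $\sigma$ versus $\sigma^{-1}$ is well taken --- this is indeed where the bookkeeping is most delicate.
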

\begin{proof}
  As in \cite[Proposition 5.6]{FZ07}, the sign-coherence of $\bfc$-vectors implies that all $F$-polynomials have constant term 1.
  Since the columns of $\tilde B_{t_0}$ are linearly independent, the Newton polytope of each evaluation $F_{\bfr,j;t}(\hat y_{1;t_0},\ldots,\hat y_{n;t_0})$ has vertices in the positive cone of the $n$-dimensional lattice spanned by the columns of $\tilde B_{t_0}$.
  Moreover, by the first observation, each of these Newton polytopes will contain the vertex of this cone.
  
  These observations immediately give (2) and (3).
  Indeed, the second condition of a periodicity is equivalent to the equality $\big(\mu_{\cA,\bfr}^{t_1,t_0}\big)^*(x_{j;t_1})=\big(\mu_{\cA,\bfr}^{t_w,t_0}\big)^*(x_{\sigma(j);t_w})$ which, together with the ``separation of additions'' formula \eqref{eq:separation of additions 2}, shows that 
  \[\frac{F_{\bfr,j;t_1}(\hat y_{1;t_0},\ldots,\hat y_{n;t_0})}{F_{\bfr,\sigma(j);t_w}(\hat y_{1;t_0},\ldots,\hat y_{n;t_0})}=\prod_{i=1}^m x_{i;t_0}^{G_{i\sigma(j);t_w}-G_{ij;t_1}}\]
  is a single monomial.
  But then the pointedness of their Newton polytopes implies this monomial must be 1, giving (2) and (3).

  Finally, (3) immediately implies (1) by the ``separation of additions'' formula \eqref{eq:separation of additions 1} and condition (1) of a periodicity of cluster mutations.
\end{proof}

\subsection{Mutation of Groupoid Charts}
\label{sec:groupoid mutations}

We have presented thus far the Hamiltonian viewpoint of mutations for cluster charts.
Our goal now is to lift these results to the level of the symplectic groupoids $\cG_\cX$, $\cB_\cX$, $\cD_\cX$ and $\cG_\cA$, $\cB_\cA$, $\cD_\cA$ integrating the log-canonical Poisson structures \eqref{eq:brackets} on $L^\times_\cX\cong\RR_\times^n$ and $L^\times_\cA\cong\RR_\times^m$ respectively as in Section~\ref{sec:local}.
Here is an outline for this section:
\begin{enumerate}
  \item We lift the Poisson ensemble map $\rho:L_\cA\to L_\cX$ in \eqref{eq:ensemble} to symplectic groupoid comorphisms.
    When the exchange matrix is square (and hence invertible), the Poisson ensemble map is a diffeomorphism from its domain onto its image and so the graph of each groupoid comorphism also defines groupoid morphisms over suitable loci.
  \item We lift the Poisson maps $\varphi_{\cX,\bfr}^t$ and $\varphi_{\cA,\bfr}^t$ in Corollary~\ref{cor:time-one flows} to symplectic groupoid morphisms using their Hamiltonian nature.
    We also show that the Lie functor maps the graphs of these groupoid morphisms to the graphs of the algebroid comorphisms naturally induced by the Poisson maps $\varphi_{\cX,\bfr}^t$ and $\varphi_{\cA,\bfr}^t$.
  \item We lift the Poisson maps $\tau_{\cX,\bfr}^{k,\varepsilon}$ and $\tau_{\cA,\bfr}^{k,\varepsilon}$ in Lemma~\ref{le:tropical cluster transformations} to groupoid comorphisms and show that they can also be interpreted as groupoid morphisms.
  \item Combining the last two steps above, we obtain the mutation rule for groupoid charts analogous to the (generalized) cluster mutations in Corollary~\ref{cor:cluster mutation}.
    Through analogues of the separation of additions formulas \eqref{eq:separation of additions 1} and \eqref{eq:separation of additions 2} (c.f.\ Theorem~\ref{th:groupoid separation of additions}), we show that any periodicity of cluster mutations lifts to a periodicity of groupoid mutations (assuming groupoid mutations are always performed according to the tropical signs of the corrseponding $\bfc$-vectors).
    In particular, this shows that groupoid charts indeed glue to give symplectic groupoids over the cluster $\cA$- and $\cX$-varieties.
\end{enumerate}

As in Section~\ref{sec:cluster mutations}, to motivate the Hamiltonian natures of the mutations, we begin working with the following groupoids over individual cluster charts with real coordinates: 
\begin{align*}
  \cG_\cX&\cong\RR^n\times L_\cX,& \cB_\cX&\cong\RR^n\times L_\cX,& \cD_\cX&\cong\RR_\times^n\times L_\cX,\\
  \cG_\cA&\cong\RR^m\times L_\cA,& \cB_\cA&\cong\RR^m\times L_\cA,& \cD_\cA&\cong\RR_\times^m\times L_\cA.
\end{align*}
We introduce coordinates on these groupoids given as follows:
\begin{itemize}
  \item $\cG_\cX$ has coordinates $(\bfq,\bfy)=(q_1,\ldots,q_n,y_1,\ldots,y_n)$; 
  \item $\cB_\cX$ has coordinates $(\bfv,\bfy)=(v_1,\ldots,v_n,y_1,\ldots,y_n)$;
  \item $\cD_\cX$ has coordinates $(\bft,\bfy)=(t_1,\ldots,t_n,y_1,\ldots,y_n)$; 
  \item $\cG_\cA$ has coordinates $(\bfp,\bfx)=(p_1,\ldots,p_m,x_1,\ldots,x_m)$; 
  \item $\cB_\cA$ has coordinates $(\bfu,\bfx)=(u_1,\ldots,u_m,x_1,\ldots,x_m)$;
  \item $\cD_\cA$ has coordinates $(\bfs,\bfx)=(s_1,\ldots,s_m,x_1,\ldots,x_m)$.
\end{itemize}
The symplectic groupoid structure on $\cG_\cA \rra L_\cA$ is the same as in Theorem~\ref{thm:PoiSpLogC}, while the symplectic groupoid structure on $\cD_\cA \rra L_\cA$ is the same as in Theorem~\ref{thm:SymDBLoc}, and the symplectic groupoid structure on $\cB_\cA \rra L_\cA$ is the same as in Theorem~\ref{th:blowup groupoid}.
The symplectic groupoid structures over $L_\cX$ mimic the structures in the theorems above by replacing $x_i$, $x_j$ with $y_k$, $y_\ell$ and $\Omega_{ij}$ with $d_kB_{k\ell}$ together with the corresponding replacements of coordinates $p_j$ with $q_\ell$, $u_j$ with $v_\ell$, and $s_j$ with $t_\ell$.
For simplicity of notation, we will write $\alpha$ and $\beta$ for the source and target maps of all groupoids; this slight abuse of notation should not lead to any confusion.

As above, for a sequence $\bfr=(r_1,\ldots,r_n)$ of positive integers, we introduce the symplectic groupoids 
\begin{align*}
  \cG_{\cX,\bfr}&=\cG_\cX\times T^*L_\bfr,& \cB_{\cX,\bfr}&=\cB_\cX\times T^*L_\bfr,& \cD_{\cX,\bfr}&=\cD_\cX\times T^*L_\bfr,\\
  \cG_{\cA,\bfr}&=\cG_\cA\times T^*L_\bfr,& \cB_{\cA,\bfr}&=\cB_\cA\times T^*L_\bfr,& \cD_{\cA,\bfr}&=\cD_\cA\times T^*L_\bfr,
\end{align*}
where each cotangent bundle $T^*L_\bfr$ is equipped with its canonical symplectic structure and we write $a_{\ell,j}$ for the cotangent coordinate associated to the coordinate $z_{\ell,j}$ of $L_\bfr$.
Translating this symplectic structure into a Poisson bracket on $T^*L_\bfr$, we have
\begin{equation*}
  \{z_{k,i},z_{\ell,j}\}_\bfr=0,\qquad\{z_{k,i},a_{\ell,j}\}_\bfr=\delta_{k\ell}\delta_{ij},\qquad\{a_{k,i},a_{\ell,j}\}_\bfr=0.
\end{equation*}
We extend the star-involution on $L_\bfr$ to $T^*L_\bfr$ via $a_{\ell,j}\mapsto a^*_{\ell,j}:=a_{\ell,r_\ell-j}=a_{\ell,j^*}$.

Define the groupoids 
\begin{align*}
  \cG^\times_{\cX,\bfr}&\cong\RR^n\times L^\times_\cX\times T^*L^\times_\bfr,& \cB^\times_{\cX,\bfr}&\cong\RR^n\times L^\times_\cX\times T^*L^\times_\bfr,& \cD^\times_{\cX,\bfr}&\cong\RR_\times^n\times L^\times_\cX\times T^*L^\times_\bfr,\\
  \cG^\times_{\cA,\bfr}&\cong\RR^m\times L^\times_\cA\times T^*L^\times_\bfr,& \cB^\times_{\cA,\bfr}&\cong\RR^m\times L^\times_\cA\times T^*L^\times_\bfr,& \cD^\times_{\cA,\bfr}&\cong\RR_\times^m\times L^\times_\cA\times T^*L^\times_\bfr.
\end{align*}
as the restrictions of the groupoids above to appropriate orthants in $L_{\cX,\bfr}$ and $L_{\cA,\bfr}$.
\begin{proposition} 
  \label{prop:ensemblegpd}
  The Poisson ensemble map $\rho:L_\cA\to L_\cX$ \eqref{eq:ensemble} lifts to the following comorphisms of symplectic groupoids:
  \begin{enumerate}
    \item $(\rho, P)$ from $\cG_\cA$ to $\cG_\cX$ given on fiber coordinates by
      \begin{equation} \label{eq:ensembleG}
	P^*\!\left(p_i\right) = \frac{1}{x_i}\sum_{k=1}^n B_{ik} \hat{y}_k q_k;
      \end{equation}
    \item $(\rho, P)$ from $\cB_\cA$ to $\cB_\cX$ given on fiber coordinates by
      \[P^*\!\left(u_i\right) = \frac{\prod_{k=1}^n (v_k\hat{y}_k+1)^{B_{ik}} - 1}{x_i};\]
    \item $(\rho, P)$ from $\cD_\cA$ to $\cD_\cX$ given on fiber coordinates by
      \[P^*\!\left(s_i \right) = \prod_{k=1}^n t_k^{B_{ik}}.\]
  \end{enumerate}
\end{proposition}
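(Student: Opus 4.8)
The plan is to establish part~(1) first -- for the source-simply-connected groupoids $\cG_\cA$ and $\cG_\cX$ -- and then to deduce parts~(2) and~(3) by transporting the resulting graph through the groupoid morphisms $\kappa$ and $\lambda$ (and $\nu$) of diagram~\eqref{eq:LGpdCD}.

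For~(1): since $\rho$ is a Poisson map (a consequence of the $D$-compatibility $\tilde B^T\Omega=[D\ \boldsymbol{0}]$, which unwinds to $\sum_i B_{ik}\Omega_{ij}=d_k\delta_{kj}$), it induces the cotangent-lift Lie algebroid comorphism $\rho^*$ on the cotangent algebroids, and since $\partial\hat y_k/\partial x_i=B_{ik}\hat y_k/x_i$ this comorphism is given in coordinates by $p_i\mapsto\frac1{x_i}\sum_k B_{ik}\hat y_kq_k$; because the exponential of Corollary~\ref{cor:exp} preserves each product $x_ip_i$, the same expression is exactly formula~\eqref{eq:ensembleG} in the \emph{groupoid} coordinates of Theorem~\ref{thm:PoiSpLogC}. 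It then suffices to check directly that~\eqref{eq:ensembleG} defines a groupoid comorphism, i.e.\ that the associated graph in $\cG_\cX\times\cG_\cA$ is compatible with the source, target, identity, multiplication and inversion maps of Theorem~\ref{thm:PoiSpLogC}. Compatibility with source and identity is immediate. Compatibility with target reduces, after substituting $x_ip_i=\sum_k B_{ik}\hat y_kq_k$ into $a_j=e^{\sum_i\Omega_{ij}x_ip_i}$, to the identity $\sum_i B_{ik}\Omega_{ij}=d_k\delta_{kj}$; compatibility with inversion and with multiplication follows from the same substitution together with the explicit structure maps. Finally, the linearization of this graph along the identity section is the graph of $\rho^*$, which is a Lagrangian subalgebroid of the product of the two cotangent algebroids because $\rho$ is Poisson; hence (by the integration result recalled in Section~\ref{sec:Poisson generalities}, cf.\ \cite{Cat04}) the graph is Lagrangian and $(\rho,P)$ is a symplectic groupoid comorphism.

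For~(2) and~(3): the maps $\kappa\colon\cG_{-}\to\cB_{-}$ and $\lambda\colon\cG_{-}\to\cD_{-}$ ($-=\cA,\cX$) are groupoid morphisms, so they carry the graph of the comorphism from~(1) to subgroupoids of $\cB_\cX\times\cB_\cA$, resp.\ $\cD_\cX\times\cD_\cA$. Using $e^{y_kq_k}=v_k\hat y_k+1$ for $\cB$ and $e^{y_kq_k}=t_k$ for $\cD$, together with $x_ip_i=\sum_k B_{ik}\hat y_kq_k$, the relations $u_i=(e^{x_ip_i}-1)/x_i$ and $s_i=e^{x_ip_i}$ become exactly $P^*(u_i)=\big(\prod_k(v_k\hat y_k+1)^{B_{ik}}-1\big)/x_i$ and $P^*(s_i)=\prod_k t_k^{B_{ik}}$. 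These expressions are single-valued and rational, so over the orthants -- where $\kappa$ and $\lambda$ are isomorphisms -- they visibly define symplectic groupoid comorphisms; since all the groupoid structure maps (Theorems~\ref{thm:SymDBLoc},~\ref{th:blowup groupoid}) and all the comorphism formulas are rational, the comorphism identities then extend by density to the whole natural domain of definition. (Equivalently, one repeats the direct verification of~(1) verbatim with the rational structure maps in place of the transcendental ones.) Since $\cD\rra L$ is only a Poisson, not symplectic, groupoid away from the orthant, the statement in~(3) is understood as a Poisson groupoid comorphism whose restriction to $\cD^\times$ is symplectic.

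The main obstacle is the multiplicativity/symplecticity part: verifying that the graph of each $P$ is a Lagrangian (respectively coisotropic) subgroupoid rather than merely a subgroupoid compatible with source and target, and -- related to this -- checking that the rational comorphism formulas genuinely extend to honest comorphisms across the loci where $\rho$ degenerates (namely $x_i=0$ with some $B_{ik}<0$) and where $\kappa$, $\lambda$, $\nu$ fail to be isomorphisms. Once the Lagrangian/coisotropic claim is reduced to the algebroid level via the linearization along the identities, the remainder is bookkeeping driven by the single identity $\tilde B^T\Omega=[D\ \boldsymbol{0}]$.
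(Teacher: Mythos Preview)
Your proposal is correct and follows essentially the same route as the paper: compute the cotangent Lie algebroid comorphism induced by the Poisson map $\rho$, use the invariance of $x_ip_i$ and $y_kq_k$ under the exponential map of Corollary~\ref{cor:exp} to transport the graph to $\cG_\cX\times\cG_\cA$, and then deduce parts~(2) and~(3) from the diagram~\eqref{eq:LGpdCD}. The paper's own proof is terser---it does not spell out the direct groupoid-axiom verification or the Lagrangian/extension discussion you include---but the skeleton is the same.
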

\begin{proof}
  We prove \eqref{eq:ensembleG} over the positive orthants, the formulas for $\cB$ and $\cD$ follow easily using the diagram \eqref{eq:LGpdCD}.
  Taking the derivative of $\rho:L^\times_\cA\to L^\times_\cX$, whose graph is given by the equation
  \[y_k = \prod_{i=1}^m x_i^{B_{ik}},\]
  we obtain the pullback map $\rho^*: \Omega^1(L^\times_\cX) \to \Omega^1(L^\times_\cA)$ given by
  \begin{align*}
    & dy_k = \sum_{i=1}^m \frac{B_{ik}}{x_i} \hat{y}_k dx_i, \qquad k = 1, \ldots, n, \\
    & \sum_{k=1}^n q_k dy_k = \sum_{i=1}^m \frac{1}{x_i} \sum_{k=1}^n B_{ik} \hat{y}_k q_k dx_i.
  \end{align*}
  We may represent this as the bundle map $\rho^*:\rho^! T^*_{\pi_\cX} L^\times_\cX \to T^*_{\pi_\cA} L^\times_\cA$ whose graph is given by the equation
  \[p_i = \frac{1}{x_i} \sum_{k=1}^n B_{ik} \hat{y}_k q_k, \qquad i = 1, \ldots, m.\]
  The graph of the Lie algebroid comorphism $\rho^*$ from $T^*_{\pi_\cA} L^\times_\cA$ to $T^*_{\pi_\cX} L^\times_\cX$ is defined by
  \[x_i p_i =  \sum_{k=1}^n B_{ik} y_k q_k, \qquad i = 1, \ldots, m.\]
  By Corollary~\ref{cor:exp}, both $x_i p_i$ and $y_k q_k$ are invariant under the exponential map to the Lie groupoids.
  But Corollary~\ref{cor:exp} shows that the cotangent Lie algebroids $T^*_{\pi_\cA} L^\times_\cA$ and $T^*_{\pi_\cX} L^\times_\cX$ are diffeomorphic respectively to the groupoids $\cG_\cA$ and $\cG_\cX$, so the associated Lie groupoid comorphism is given by the equivalent formula \eqref{eq:ensembleG}.
\end{proof}

\begin{remark}
  \label{rmk:source-connected lift}
  When $m=n$, i.e.\ the map $\rho:L_\cA\to L_\cX$ is a birational isomorphism, the Poisson ensemble map actually lifts to true morphisms of symplectic groupoids $\rho:\cG_\cA\to\cG_\cX$, $\rho:\cB_\cA\to\cB_\cX$, and $\rho:\cD_\cA\to\cD_\cX$ given respectively on fiber coordinates by 
  \begin{align*}
    \rho^*(q_\ell)&=-(d_\ell\hat y_\ell)^{-1}\sum\limits_{j=1}^m\Omega_{j\ell}x_jp_j;\\
    \rho^*(v_\ell)&=\prod\limits_{j=1}^m (u_j x_j + 1)^{-d_\ell^{-1}\Omega_{\ell j}};\\
    \rho^*(t_\ell)&=\prod\limits_{j=1}^m s_j^{-d_\ell^{-1}\Omega_{\ell j}}.
  \end{align*}
  Observe that care must me taken when considering the fractional exponents above, however the maps are in fact well-defined.
  Indeed, a morphism of groupoids must take the identity image to the identity image and thus there is a unique choice of branch to take, in other words the maps $\rho:\cB_\cA\to\cB_\cX$ and $\rho:\cD_\cA\to\cD_\cX$ are actually well-defined covering maps on the fibers.

  The choice of coordinates used in \cite{FG09c} can be identified with our coordinates $t_\ell^{d_\ell}$, one may motivate such a choice by the desire to avoid the fractional exponents above, however as observed such considerations are unnecessary.
\end{remark}

Next, we lift the Hamiltonian Poisson maps $\varphi_{\cX,\bfr}^t$ and $\varphi_{\cA,\bfr}^t$ in Corollary~\ref{cor:time-one flows}.
Using appropriate source and target maps, define functions $H_{\cX,\bfr}^{k,\varepsilon}:=\alpha^*(h_{\cX,\bfr}^{k,\varepsilon})-\beta^*(h_{\cX,\bfr}^{k,\varepsilon})$ on the groupoids $\cG^\times_{\cX,\bfr}$, $\cB^\times_{\cX,\bfr}$, $\cD^\times_{\cX,\bfr}$ and functions $H_{\cA,\bfr}^{k,\varepsilon}:=\alpha^*(h_{\cA,\bfr}^{k,\varepsilon})-\beta^*(h_{\cA,\bfr}^{k,\varepsilon})$ on the groupoids $\cG^\times_{\cA,\bfr}$, $\cB^\times_{\cA,\bfr}$, $\cD^\times_{\cA,\bfr}$.
More explicitly, these Hamiltonian functions are given by the following analogues of \eqref{eq:hamiltonians} using appropriate source and target maps:
\begin{align}
  \label{eq:X hamiltonian}
  H_{\cX,\bfr}^{k,\varepsilon}&:=\frac{\varepsilon}{d_k}\int_{\alpha^*(y_k^\varepsilon)}^{\beta^*(y_k^\varepsilon)} \frac{\log\big(Z_k^\circ(u)\big)}{u}du;\\
  \label{eq:A hamiltonian}
  H_{\cA,\bfr}^{k,\varepsilon}&:=\frac{\varepsilon}{d_k}\int_{\alpha^*(\hat y_k^\varepsilon)}^{\beta^*(\hat y_k^\varepsilon)} \frac{\log\big(Z_k^\circ(u)\big)}{u}du;
\end{align}
where
\[ Z_k^\circ(u)=\begin{cases} Z_k(u) & \text{if $\varepsilon=+1$;}\\ Z_k^*(u) & \text{if $\varepsilon=-1$.} \end{cases}\]
Note that, although we use the same symbols for the Hamiltonian functions on each groupoid, it will be clear from context which source and target map should be used in their definition.

Write $X_{\cX,\bfr}^{k,\varepsilon}$ and $X_{\cA,\bfr}^{k,\varepsilon}$ for the Hamiltonian vector fields associated to the functions $H_{\cX,\bfr}^{k,\varepsilon}$ and $H_{\cA,\bfr}^{k,\varepsilon}$ on the groupoids $\cG^\times_{\cX,\bfr}$, $\cB^\times_{\cX,\bfr}$, $\cD^\times_{\cX,\bfr}$ and $\cG^\times_{\cA,\bfr}$, $\cB^\times_{\cA,\bfr}$, $\cD^\times_{\cA,\bfr}$ respectively, i.e.\ they are the vector fields naturally associated to the derivations $\{H_{\cX,\bfr}^{k,\varepsilon},\cdot\}$ and $\{H_{\cA,\bfr}^{k,\varepsilon},\cdot\}$ on the appropriate groupoids.
\begin{lemma}
  For $1\le k\le n$ and $\varepsilon\in\{\pm1\}$, the Hamiltonian vector fields $X_{\cX,\bfr}^{k,\varepsilon}$ determine the following dynamics on the symplectic groupoids $\cG^\times_{\cX,\bfr}$, $\cB^\times_{\cX,\bfr}$, $\cD^\times_{\cX,\bfr}$ respectively:
  \begin{align*}
    \dot q_\ell&=\{H_{\cX,\bfr}^{k,\varepsilon},q_\ell\}_{\cG_\cX}=-\frac{\delta_{k\ell}}{d_ky_k}\log\left(\frac{Z_k^\circ\left(y_k^\varepsilon e^{\varepsilon\sum_{\ell'=1}^n d_{\ell'} B_{\ell' k}q_{\ell'} y_{\ell'}}\right)}{Z_k^\circ(y_k^\varepsilon)}\right)+B_{k\ell}\log\big(Z_k^\circ(y_k^\varepsilon)\big)q_\ell;\\
    \dot v_\ell&=\{H_{\cX,\bfr}^{k,\varepsilon},v_\ell\}_{\cB_\cX}=-\frac{\delta_{k\ell}}{d_ky_k}\log\left(\frac{Z_k^\circ\left(y_k^\varepsilon \prod_{\ell'=1}^n (1+v_{\ell'}y_{\ell'})^{\varepsilon d_{\ell'} B_{\ell' k}}\right)}{Z_k^\circ(y_k^\varepsilon)}\right)(v_ky_k+1)+B_{k\ell}\log\big(Z_k^\circ(y_k^\varepsilon)\big)v_\ell;\\
    \dot t_\ell&=\{H_{\cX,\bfr}^{k,\varepsilon},t_\ell\}_{\cD_\cX}=-\frac{\delta_{k\ell}}{d_k}\log\left(\frac{Z_k^\circ\left(y_k^\varepsilon \prod_{\ell'=1}^nt_{\ell'}^{\varepsilon d_{\ell'} B_{\ell' k}}\right)}{Z_k^\circ(y_k^\varepsilon)}\right)t_k;
  \end{align*}
  and the Hamiltonian vector fields $X_{\cA,\bfr}^{k,\varepsilon}$ determine the following dynamics on the symplectic groupoids $\cG^\times_{\cA,\bfr}$, $\cB^\times_{\cA,\bfr}$, $\cD^\times_{\cA,\bfr}$ respectively:
  \begin{align}
    \label{eq:p dot}
    \dot p_j&=\{H_{\cA,\bfr}^{k,\varepsilon},p_j\}_{\cG_\cA}=-\frac{B_{jk}}{d_kx_j}\log\left(\frac{Z_k^\circ\left(\hat y_k^\varepsilon e^{-\varepsilon d_kp_kx_k}\right)}{Z_k^\circ(\hat y_k^\varepsilon)}\right)+\delta_{jk}\log\big(Z_k^\circ(\hat y_k^\varepsilon)\big)p_k;\\
    \nonumber
    \dot u_j&=\{H_{\cA,\bfr}^{k,\varepsilon},u_j\}_{\cB_\cA}=-\frac{B_{jk}}{d_kx_j}\log\left(\frac{Z_k^\circ\left(\hat y_k^\varepsilon (1+u_kx_k)^{-\varepsilon d_k}\right)}{Z_k^\circ(\hat y_k^\varepsilon)}\right)(u_jx_j+1)+\delta_{jk}\log\big(Z_k^\circ(\hat y_k^\varepsilon)\big)u_k\\
    \nonumber
    \dot s_j&=\{H_{\cA,\bfr}^{k,\varepsilon},s_j\}_{\cD_\cA}=-\frac{B_{jk}}{d_k}\log\left(\frac{Z_k^\circ\left(\hat y_k^\varepsilon s_k^{-\varepsilon d_k}\right)}{Z_k^\circ(\hat y_k^\varepsilon)}\right)s_j.
  \end{align}
  The coordinates $a_{\ell,j}$ on $T^*L^\times_\bfr$ evolve according to
  \begin{equation}
    \label{eq:a dot}
    \dot a_{\ell,j}=\{H_{-,\bfr}^{k,\varepsilon},a_{\ell,j}\}_-=\delta_{k\ell}\frac{\varepsilon}{d_k}\int_*^* \frac{u^{\varepsilon j-1}}{Z_k(u^\varepsilon)}du, 
  \end{equation}
  where the bounds of integration are respectively those from equations \eqref{eq:X hamiltonian} or \eqref{eq:A hamiltonian}.
\end{lemma}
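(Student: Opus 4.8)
The plan is to write each groupoid Hamiltonian as $H_{-,\bfr}^{k,\varepsilon}=\alpha^*(h_{-,\bfr}^{k,\varepsilon})-\beta^*(h_{-,\bfr}^{k,\varepsilon})$, using the appropriate source and target maps, and to reduce every Poisson bracket to an elementary computation with the explicit multiplicative bivectors of Theorems~\ref{thm:PoiSpLogC}, \ref{thm:SymDBLoc} and~\ref{th:blowup groupoid} together with their $\cX$-analogues. The first step is to record the source- and target-pullbacks of $\hat y_k$. Since the source maps are projections, $\alpha^*(\hat y_k)=\prod_i x_i^{B_{ik}}$, which I keep denoting $\hat y_k$; for the target maps one uses the compatibility identity $\tilde B^T\Omega=[D\ \boldsymbol{0}]$, i.e.\ $\sum_i B_{ik}\Omega_{ij}=d_k\delta_{jk}$, to get $\beta^*(\hat y_k)=c_k\,\hat y_k$ with $c_k=e^{-d_kx_kp_k}$ on $\cG_\cA$, $c_k=(x_ku_k+1)^{-d_k}$ on $\cB_\cA$, and $c_k=s_k^{-d_k}$ on $\cD_\cA$; the analogous computation on the $\cX$-side (using $d_kB_{k\ell}=-d_\ell B_{\ell k}$) gives $\beta^*(y_k)=c_k'\,y_k$ with $c_k'$ equal to $e^{\sum_{\ell'}d_{\ell'}B_{\ell' k}q_{\ell'}y_{\ell'}}$, $\prod_{\ell'}(1+v_{\ell'}y_{\ell'})^{d_{\ell'}B_{\ell' k}}$, and $\prod_{\ell'}t_{\ell'}^{d_{\ell'}B_{\ell' k}}$ on $\cG_\cX$, $\cB_\cX$, $\cD_\cX$. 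After raising to the power $\varepsilon$ these are exactly the arguments of $Z_k^\circ(\cdot)$ in the claimed formulas.

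With the antiderivative $G(w)=-\tfrac{\varepsilon}{d_k}\int_0^{w^\varepsilon}\tfrac{\log Z_k^\circ(u)}{u}\,du$, so that $\alpha^*h=G(\hat y_k)$, $\beta^*h=G(\beta^*\hat y_k)$ and $G'(w)=-\tfrac{1}{d_kw}\log Z_k^\circ(w^\varepsilon)$, the chain rule gives for each coordinate function $f$ of the relevant groupoid
\[\{H,f\}=G'(\hat y_k)\,\{\hat y_k,f\}-G'(\beta^*\hat y_k)\,\{\beta^*\hat y_k,f\},\]
so everything is reduced to the two brackets $\{\hat y_k,f\}$ and $\{\beta^*\hat y_k,f\}=\{c_k\hat y_k,f\}$. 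For the base coordinates one uses that on a symplectic groupoid the source and target fibers are symplectically orthogonal, so $X_{\alpha^*h}$ is tangent to the $\beta$-fibers and $X_{\beta^*h}$ to the $\alpha$-fibers; pushing $X_H$ forward under $\alpha$ and under $\beta$ then recovers the base dynamics of Lemma~\ref{le:hamiltonian dynamics} and in particular shows that $\beta^*\hat y_k$ (resp.\ $\beta^*y_k$) is conserved by $X_H$, since $\hat y_k$ (resp.\ $y_k$) is. The content lies in the fiber coordinates, where I would expand $\{c_k\hat y_k,f\}=c_k\{\hat y_k,f\}+\hat y_k\{c_k,f\}$ directly; the compatibility identity and the skew-symmetry of $\Omega$ (resp.\ of $DB$) then collapse the conjugate-momentum contributions, yielding for instance $\{\hat y_k,p_j\}=-\tfrac{B_{jk}}{x_j}\hat y_k-d_k\delta_{jk}\hat y_k p_k$ and $\{\beta^*\hat y_k,p_j\}=-\tfrac{B_{jk}}{x_j}\beta^*\hat y_k$ on $\cG_\cA$, and likewise on $\cD_\cA$ and $\cD_\cX$. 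Substituting into the displayed identity and combining the two logarithms into $\log\!\big(Z_k^\circ((\beta^*\hat y_k)^\varepsilon)/Z_k^\circ(\hat y_k^\varepsilon)\big)$ produces exactly the asserted $\dot p_j$, $\dot s_j$, $\dot q_\ell$ and $\dot t_\ell$.

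To avoid repeating this computation six times, I would carry it out only on $\cD_\cA$ and $\cD_\cX$, whose bivectors are the simplest, and transport the result to $\cB$ and $\cG$ along the groupoid morphisms $\nu$ and $\lambda$ of~\eqref{eq:LGpdCD} (taken to act as the identity on the $T^*L_\bfr$ factors): these intertwine the source and target maps, hence pull $H$ back to $H$, and being Poisson they satisfy $d\nu(X_H^{\cB})=X_H^{\cD}\circ\nu$ and $d\lambda(X_H^{\cG})=X_H^{\cD}\circ\lambda$; since over the orthants $\nu$ and $\lambda$ are local diffeomorphisms, the changes of variables $s_j=x_ju_j+1$ and $s_j=e^{x_jp_j}$ turn the $\cD$-dynamics into the $\cB$- and $\cG$-dynamics, giving $\dot u_j$ and $\dot p_j$ (and likewise $\dot v_\ell$ and $\dot q_\ell$ on the $\cX$-side). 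Finally, for the cotangent coordinates $a_{\ell,j}$: since $T^*L_\bfr$ carries the product Poisson structure and $a_{\ell,j}$ Poisson-commutes with every coordinate except $z_{\ell,j}$, with $\{z_{\ell,j},a_{\ell,j}\}_\bfr=1$, one has $\dot a_{\ell,j}=\{H,a_{\ell,j}\}=\partial H/\partial z_{\ell,j}$; as the limits of integration of $H$ are independent of the $z$-variables, differentiating under the integral sign and using $\partial_{z_{k,j}}Z_k(u)=u^j$ when $\varepsilon=+1$, respectively $\partial_{z_{k,j}}Z_k^*(u)=u^{r_k-j}$ together with $Z_k^*(u)=u^{r_k}Z_k(u^{-1})$ when $\varepsilon=-1$, gives $\partial H/\partial z_{\ell,j}=\delta_{k\ell}\tfrac{\varepsilon}{d_k}\int_*^*\tfrac{u^{\varepsilon j-1}}{Z_k(u^\varepsilon)}\,du$ with the bounds of \eqref{eq:X hamiltonian} or \eqref{eq:A hamiltonian}, which is~\eqref{eq:a dot}.

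I expect the main obstacle to be purely the volume of bookkeeping: each individual bracket is a two-line calculation, but there are many of them and one must track signs carefully and repeatedly invoke the compatibility condition. The single conceptual point — that the ``target half'' $\beta^*h$ of each Hamiltonian contributes so simply — is accounted for by the symplectic orthogonality of the source and target foliations and the resulting conservation of $\beta^*\hat y_k$.
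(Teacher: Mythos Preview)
Your proposal is correct and, for the core computation, proceeds exactly as the paper does: write $H=\alpha^*h-\beta^*h$, apply the chain rule to reduce to the brackets $\{\hat y_k,p_j\}$ and $\{c_k,p_j\}$ (with $c_k=e^{-d_kx_kp_k}$), and collapse these using the compatibility identity $\sum_i B_{ik}\Omega_{ij}=d_k\delta_{kj}$. The paper carries this out only for $\dot p_j$ on $\cG_\cA$ and explicitly leaves the remaining five coordinate systems as an exercise; your treatment of $\dot a_{\ell,j}$ via $\partial H/\partial z_{\ell,j}$ is likewise identical to the paper's.

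Where you diverge is in handling the other five cases. Rather than repeating the direct bracket computation on each groupoid, you propose computing once on $\cD_\cA$ and $\cD_\cX$ (whose bivectors are simplest) and then pulling back along the Poisson groupoid morphisms $\nu$ and $\lambda$ of~\eqref{eq:LGpdCD}, using that these intertwine source and target and are local diffeomorphisms over $L^\times$. This is a genuine economy: it replaces four separate bracket calculations by two changes of variables $s_j=x_ju_j+1$ and $s_j=e^{x_jp_j}$. The paper's approach, by contrast, treats each groupoid independently (implicitly), which is more repetitive but avoids invoking the compatibility of $\nu$, $\lambda$ with the Hamiltonians. Your use of the symplectic orthogonality of source and target fibers to explain the conservation of $\beta^*\hat y_k$ is also a conceptual addition not spelled out in the paper, though it is standard and the paper's direct computation of $\{e^{-d_kp_kx_k},p_j\}=\delta_{jk}d_kp_ke^{-d_kp_kx_k}$ amounts to the same thing.
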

\begin{proof}
  We prove equation \eqref{eq:p dot}, leaving the other groupoid coordinates as an exercise for the reader.
  By Theorem~\ref{thm:PoiSpLogC}, we have $\{x_i,p_j\}_{\cG_\cA}=-\delta_{ij}-\Omega_{ij}x_ip_j$ and $\{p_i,p_j\}_{\cG_\cA}=\Omega_{ij}p_ip_j$ so that
  \[\{\hat y_k,p_j\}_{\cG_\cA}=\sum_{i=1}^m B_{ik}\frac{\hat y_k}{x_i}\{x_i,p_j\}_{\cG_\cA}=-B_{jk}\frac{\hat y_k}{x_j}-\delta_{jk}d_k\hat y_kp_k\]
  and
  \[\{e^{-d_kp_kx_k},p_j\}_{\cG_\cA}=\delta_{jk}d_kp_ke^{-d_kp_kx_k}.\]
  Now equation~\eqref{eq:p dot} follows by applying the chain rule for the derivation $\{\cdot,p_j\}_{\cG_\cA}$.

  To see equation \eqref{eq:a dot}, we observe that $\frac{d\log(Z_k(u))}{dz_{\ell,j}}=\delta_{k\ell}\frac{u^j}{Z_k(u)}$ and $\frac{d\log(Z_k^*(u))}{dz_{\ell,j}}=\delta_{k\ell}\frac{u^{r_k-j}}{Z_k^*(u)}=\delta_{k\ell}\frac{u^{-j}}{Z_k(u^{-1})}$.
\end{proof}

Next we consider the time-$t$ flows, all denoted $\varphi_{\cX,\bfr}^t$, of the vector fields $X_{\cX,\bfr}^{k,\varepsilon}$ on the groupoids $\cG^\times_{\cX,\bfr}$, $\cB^\times_{\cX,\bfr}$, $\cD^\times_{\cX,\bfr}$ and the time-$t$ flows, all denoted $\varphi_{\cA,\bfr}^t$, of the vector fields $X_{\cA,\bfr}^{k,\varepsilon}$ on the groupoids $\cG^\times_{\cA,\bfr}$, $\cB^\times_{\cA,\bfr}$, $\cD^\times_{\cA,\bfr}$.
\begin{corollary}
  \label{cor:groupoid hamiltonian flows}
  For $1\le k\le n$ and $\varepsilon\in\{\pm1\}$, the Hamiltonian flow $\varphi_{\cX,\bfr}^t: L^\times_{\cX,\bfr} \to L^\times_{\cX,\bfr}$ in Corollary~\ref{cor:time-one flows} is lifted to the multiplicative Hamiltonian flows $\varphi_{\cX,\bfr}^t: \cG^\times_{\cX,\bfr} \to \cG^\times_{\cX,\bfr}$, $\varphi_{\cX,\bfr}^t: \cB^\times_{\cX,\bfr} \to \cB^\times_{\cX,\bfr}$, $\varphi_{\cX,\bfr}^t: \cD^\times_{\cX,\bfr} \to \cD^\times_{\cX,\bfr}$, given on coordinates by
  \begin{align*}
    (\varphi_{\cX,\bfr}^t)^*(q_\ell y_\ell)&=q_\ell y_\ell-t\frac{\delta_{k\ell}}{d_k}\log\left(\frac{Z_k^\circ\left(y_k^\varepsilon e^{\varepsilon\sum_{\ell'=1}^n d_{\ell'} B_{\ell' k}q_{\ell'} y_{\ell'}}\right)}{Z_k^\circ(y_k^\varepsilon)}\right);\\
    (\varphi_{\cX,\bfr}^t)^*(v_\ell y_\ell + 1)&=\left(\frac{Z_k^\circ\left(y_k^\varepsilon \prod_{\ell'=1}^n (v_{\ell'} y_{\ell'} + 1)^{\varepsilon d_{\ell'} B_{\ell' k}}\right)}{Z_k^\circ(y_k^\varepsilon)}\right)^{-t\frac{\delta_{k\ell}}{d_k}}(v_\ell y_\ell+1);\\
    (\varphi_{\cX,\bfr}^t)^*(t_\ell)&=\left(\frac{Z_k^\circ\left(y_k^\varepsilon \prod_{\ell'=1}^n t_{\ell'}^{\varepsilon d_{\ell'} B_{\ell' k}}\right)}{Z_k^\circ(y_k^\varepsilon)}\right)^{-t\delta_{k\ell}/d_k}t_\ell;
  \end{align*}
  and the Hamiltonian flow $\varphi_{\cA,\bfr}^t: L^\times_{\cA,\bfr} \to L^\times_{\cA,\bfr}$ in Corollary~\ref{cor:time-one flows} is lifted to the multiplicative Hamiltonian flows $\varphi_{\cA,\bfr}^t: \cG^\times_{\cA,\bfr} \to \cG^\times_{\cA,\bfr}$, $\varphi_{\cA,\bfr}^t: \cB^\times_{\cA,\bfr} \to \cB^\times_{\cA,\bfr}$, $\varphi_{\cA,\bfr}^t: \cD^\times_{\cA,\bfr} \to \cD^\times_{\cA,\bfr}$, given on coordinates by
  \begin{align*}
    (\varphi_{\cA,\bfr}^t)^*(p_jx_j)&=p_jx_j-t\frac{B_{jk}}{d_k}\log\left(\frac{Z_k^\circ\left(\hat y_k^\varepsilon e^{-\varepsilon d_kp_kx_k}\right)}{Z_k^\circ(\hat y_k^\varepsilon)}\right);\\
    (\varphi_{\cA,\bfr}^t)^*(u_jx_j+1)&=\left(\frac{Z_k^\circ\left(\hat y_k^\varepsilon (u_kx_k+1)^{-\varepsilon d_k}\right)}{Z_k^\circ(\hat y_k^\varepsilon)}\right)^{-t\frac{B_{jk}}{d_k}}(u_jx_j+1);\\
    (\varphi_{\cA,\bfr}^t)^*(s_j)&=\left(\frac{Z_k^\circ\left(\hat y_k^\varepsilon s_k^{-\varepsilon d_k}\right)}{Z_k^\circ(\hat y_k^\varepsilon)}\right)^{-tB_{jk}/d_k}s_j.
  \end{align*}
  Each of the Hamiltonian flows above transform the coordinates $a_{\ell,j}$ on $T^*L^\times_\bfr$ according to
  \begin{equation*}
    (\varphi_{-,\bfr}^t)^*(a_{\ell,j})=a_{\ell,j}+t\delta_{k\ell}\frac{\varepsilon}{d_k}\int_*^* \frac{u^{\varepsilon j-1}}{Z_k(u^\varepsilon)}du
  \end{equation*}
  where the bounds of integration are respectively those from equations \eqref{eq:X hamiltonian} or \eqref{eq:A hamiltonian}.
\end{corollary}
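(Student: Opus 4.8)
The plan is to integrate the autonomous systems of ODEs produced by the preceding Lemma, using that the flows are already known on a large collection of coordinates and that several combinations are constants of motion. First I would invoke the lifting of Hamiltonian flows recalled in Section~\ref{sec:Poisson generalities} (following \cite{Fer07}): since each Hamiltonian $H_{-,\bfr}^{k,\varepsilon}=\partial h_{-,\bfr}^{k,\varepsilon}=\alpha^*(h_{-,\bfr}^{k,\varepsilon})-\beta^*(h_{-,\bfr}^{k,\varepsilon})$ is a multiplicative function, and the symplectic forms $\omega_\cG$, $\omega_\cB$ and the Poisson bi-vector $\sigma_\cD$ are multiplicative, the vector field $X_{-,\bfr}^{k,\varepsilon}$ is multiplicative, so each time-$t$ flow $\varphi_{-,\bfr}^t$ is a multiplicative automorphism of the corresponding (symplectic or Poisson) groupoid covering the base flow of Corollary~\ref{cor:time-one flows} under the source map. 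Consequently $\alpha^*(y_\ell)$, $\alpha^*(x_j)$ and $\alpha^*(z_{\ell,j})=z_{\ell,j}$ on each groupoid evolve exactly as in Corollary~\ref{cor:time-one flows}; since $B_{kk}=0$ this already identifies $\alpha^*(y_k)$, $\alpha^*(\hat y_k)$ and all the $z_{\ell,j}$ as fixed by the flow, and it gives the closed evolutions $\frac{d}{dt}(\alpha^*y_\ell)=-B_{k\ell}\log\big(Z_k^\circ(y_k^\varepsilon)\big)\alpha^*y_\ell$ and $\frac{d}{dt}(\alpha^*x_j)=-\delta_{jk}\log\big(Z_k^\circ(\hat y_k^\varepsilon)\big)\alpha^*x_j$ with constant coefficients.

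Next I would check that the arguments of $Z_k^\circ$ appearing in the dynamics of the Lemma are themselves constants of motion. This is a direct consequence of the ODEs together with $B_{kk}=0$: for instance on $\cD_\cX$ one has $\frac{d}{dt}\log\big(y_k^\varepsilon\prod_{\ell'}t_{\ell'}^{\varepsilon d_{\ell'}B_{\ell'k}}\big)=\varepsilon\sum_{\ell'}d_{\ell'}B_{\ell'k}\,\dot t_{\ell'}/t_{\ell'}$, and substituting the formula for $\dot t_{\ell'}$ the sum collapses to a multiple of $B_{kk}=0$; the corresponding computations on the other five groupoid families are identical in spirit. It is illuminating, though not strictly necessary, to note that these constant arguments are exactly the target pullbacks $\beta^*(y_k^\varepsilon)$ and $\beta^*(\hat y_k^\varepsilon)$: on $\cG_\cX$, $\cB_\cX$, $\cD_\cX$ the target maps give $\beta^*(y_k)/\alpha^*(y_k)$ equal to $e^{\sum_{\ell'}d_{\ell'}B_{\ell'k}q_{\ell'}y_{\ell'}}$, $\prod_{\ell'}(v_{\ell'}y_{\ell'}+1)^{d_{\ell'}B_{\ell'k}}$, $\prod_{\ell'}t_{\ell'}^{d_{\ell'}B_{\ell'k}}$ respectively, while on the $\cA$-side the compatibility identity $\tilde B^T\Omega=[D\ \boldsymbol{0}]$ yields $\sum_i\Omega_{i'i}B_{ik}=-d_k\delta_{i'k}$ and hence $\beta^*(\hat y_k)=\hat y_k e^{-d_kp_kx_k}$ on $\cG_\cA$, $\hat y_k(u_kx_k+1)^{-d_k}$ on $\cB_\cA$, and $\hat y_k s_k^{-d_k}$ on $\cD_\cA$.

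With the $Z_k^\circ$-arguments recognized as constants of motion, each scalar ODE of the Lemma has a right-hand side that is constant in $t$. A short Leibniz-rule computation then shows that $\frac{d}{dt}(q_\ell y_\ell)$, $\frac{d}{dt}(p_jx_j)$ and $\frac{d}{dt}a_{\ell,j}$ are constant, whereas $\frac{d}{dt}\log(v_\ell y_\ell+1)$, $\frac{d}{dt}\log(u_jx_j+1)$, $\frac{d}{dt}\log t_\ell$ and $\frac{d}{dt}\log s_j$ are constant; the only terms one must watch are the cross terms of the shape $\delta_{jk}\log(Z_k^\circ)(p_kx_j-p_jx_j)$, which either vanish because $j\ne k$ or, when $j=k$, cancel identically while the remaining $B_{jk}$-term vanishes since $B_{kk}=0$. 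Integrating these scalar equations from $0$ to $t$ produces exactly the claimed formulas, with linear growth for $q_\ell y_\ell$, $p_jx_j$, $a_{\ell,j}$ and multiplicative behaviour for the remaining coordinates; in particular the formula for $(\varphi_{-,\bfr}^t)^*(a_{\ell,j})$ is immediate from \eqref{eq:a dot}, whose integrand depends only on $z_{k,j}$ and on the integration bounds, all constants of motion. Finally, $\varphi_{-,\bfr}^t$ genuinely restricts to a self-map of each orthant groupoid, since on the positive orthants every $Z_k^\circ$ is evaluated at a positive real argument and every base of a real power is positive.

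I expect the main obstacle to be organizational rather than conceptual: correctly matching each $Z_k^\circ$-argument with the appropriate target pullback and tracking the $\varepsilon$- and $d_k$-exponents across all six groupoid families. On the $\cA$-side this matching genuinely uses the compatibility of the pair $(\tilde B,\Omega)$, whereas on the $\cX$-side it rests only on $B_{kk}=0$ and skew-symmetry of $DB$. Once these identifications are secured, the three families $\cG$, $\cB$, $\cD$ can be handled uniformly via the morphisms $\kappa$, $\nu$, $\lambda$ of \eqref{eq:LGpdCD}, and the remaining integrations are entirely routine.
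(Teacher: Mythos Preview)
Your proposal is correct and follows essentially the same route as the paper: identify the $Z_k^\circ$-arguments as constants of motion, deduce that the relevant combinations $q_\ell y_\ell$, $p_jx_j$, $a_{\ell,j}$ have constant derivative while $v_\ell y_\ell+1$, $u_jx_j+1$, $t_\ell$, $s_j$ have derivative a constant multiple of themselves, and integrate. Your additional identification of the conserved arguments with the target pullbacks $\beta^*(y_k^\varepsilon)$ and $\beta^*(\hat y_k^\varepsilon)$ (using $\tilde B^T\Omega=[D\ \boldsymbol{0}]$ on the $\cA$-side) is a helpful gloss the paper does not make explicit in the proof itself, but it does not change the argument.
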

\begin{proof}
  To see this, it suffices to make the following observations:
  \begin{itemize}
    \item The quantities $y_k$, $e^{\varepsilon\sum_{\ell'=1}^n d_{\ell'} B_{\ell' k}q_{\ell'} y_{\ell'}}$, $\prod_{\ell'=1}^n (v_{\ell'}y_{\ell'}+1)^{\varepsilon d_{\ell'} B_{\ell' k}}$, and $\prod_{\ell'=1}^nt_{\ell'}^{\varepsilon d_{\ell'} B_{\ell' k}}$ are conserved under the flow of the vector fields $X_{\cX,\bfr}^{k,\varepsilon}$.
      It follows that $\frac{d}{dt}(q_\ell y_\ell)$ is a constant, $\frac{d}{dt}(v_\ell y_\ell+1)$ is a constant multiple of $v_\ell y_\ell+1$, and $\frac{d}{dt}(t_\ell)$ is a constant multiple of $t_\ell$.
    \item The quantities $\hat y_k$, $e^{-\varepsilon d_kp_kx_k}$, $(u_k x_k + 1)^{-\varepsilon d_k}$, and $s_k^{-\varepsilon d_k}$ are conserved under the flow of the vector fields $X_{\cA,\bfr}^{k,\varepsilon}\in\cT_{\cG_{\cA,\bfr}}$.
      It follows that $\frac{d}{dt}(p_jx_j)$ is a constant, $\frac{d}{dt}(u_j x_j + 1)$ is a constant multiple of $u_j x_j+1$, and $\frac{d}{dt}(s_j)$ is a constant multiple of $s_j$.
    \item As a result, in the flow of any groupoid, $\frac{d}{dt}(a_{\ell,j})$ is constant.
  \end{itemize}
  In each case, the resulting differential equation is easy to solve and we leave as an exercise for the reader to check the formulas given above.
\end{proof}

For the groupoid morphism $\varphi^1_{\cA,\bfr}: \cG^\times_{\cA,\bfr} \to \cG^\times_{\cA,\bfr}$ in Corollary~\ref{cor:groupoid hamiltonian flows}, we give an explicit verification that the corresponding Lie algebroid morphism is the dual bundle map of the Lie algebroid comorphism induced by the base Poisson map $\varphi^1_{\cA,\bfr}:L^\times_{\cA,\bfr}\to L^\times_{\cA,\bfr}$.
A similar story exists for the other groupoid maps in Corollary~\ref{cor:groupoid hamiltonian flows}.

To ease the notation, we drop the time parameter in the superscript, fix $\bfr$ and use Sans-serif font for the coordinates for the range, e.g.\ $\sfx$ for the range versus $x$ for the domain.
To begin with, we apply the Lie functor, see \eqref{eq:Liefunctor} and the ensuing discussion.
The source map of $\cG_\cA \rra L_\cA$ is 
\[
	\alpha: \cG_\cA \to L_\cA, \qquad (\bfp, \bfx) \mapsto \bfx,
\]
so the kernel of $\alpha_*: T\cG_\cA \to TL_\cA$ is generated by $\frac{\partial}{\partial p_j}, \> j = 1, \ldots, m$.
Given the Hamiltonian groupoid morphism
\begin{align*}
	\varphi^1_{\cA,\bfr}: ~& \cG_\cA \to \cG_\cA, \\
	& \sfx_j = \big(Z_k^\circ(\hat y_k^\varepsilon)\big)^{-\delta_{jk}}x_j, \\
	& \sfp_j \sfx_j = p_jx_j-\frac{B_{jk}}{d_k}\log\left(\frac{Z_k^\circ\left(\hat y_k^\varepsilon e^{-\varepsilon d_kp_kx_k}\right)}{Z_k^\circ(\hat y_k^\varepsilon)}\right),
\end{align*}
its pushfoward is
\begin{align*}
	\left(\varphi^1_{\cA,\bfr}\right)_*: ~& \ker(\alpha_*: T\cG_\cA \to TL_\cA) \to \ker(\alpha_*: T\cG_\cA \to TL_\cA),  \\
	& \frac{\partial}{\partial p_j} 
	= \frac{\partial}{\partial \sfp_j} + \frac{\varepsilon x_k \hat{y}_k^\varepsilon B_{jk}}{x_j} \frac{ e^{-\varepsilon d_k p_k x_k} (Z_k^\circ)'\left(\hat y_k^\varepsilon e^{-\varepsilon d_k p_k x_k}\right)}{Z_k^\circ\left(\hat y_k^\varepsilon e^{-\varepsilon d_k p_k x_k}\right)} \frac{\partial}{\partial p_k}, \qquad \text{if}~ j \ne k, \\
	& \frac{\partial}{\partial p_k}
	= \frac{1}{Z_k^\circ(\hat y_k^\varepsilon)}\frac{\partial}{\partial \sfp_k}.
\end{align*}
Restricting to the identity image, which is given by $p_j = 0, \> j = 1, \ldots, m$, we have
\begin{align*}
	\Lie \left(\varphi^1_{\cA,\bfr} \right): ~& \Lie \left(\cG_\cA \right) \to \Lie \left( \cG_\cA \right),  \\
	& \frac{\partial}{\partial p_j}
	= \frac{\partial}{\partial \sfp_j} + \frac{\varepsilon \sfx_k \hat{\sfy}_k^\varepsilon B_{jk}}{\sfx_j} \frac{(Z_k^\circ)'\left(\hat \sfy_k^\varepsilon \right)}{Z_k^\circ\left(\hat \sfy_k^\varepsilon \right)} \frac{\partial}{\partial \sfp_k}, \qquad \text{if}~ j \ne k, \\
	& \frac{\partial}{\partial p_k} 
	= \frac{1}{Z_k^\circ(\hat{\sfy}_k^\varepsilon)}\frac{\partial}{\partial \sfp_k}.
\end{align*}
On the other hand, taking derivatives of $\sfx_j = \big(Z_k^\circ(\hat{y}_k^\varepsilon)\big)^{-\delta_{jk}}x_j$,
we obtain the pullback map
\begin{align*}
	(\varphi^1_{\cA,\bfr})^*: ~& \Omega^1(L_\cA) \to \Omega^1(L_\cA), \\
	& d \sfx_j = dx_j, \qquad \text{if}~j\ne k, \\
	& d \sfx_k = \frac{1}{Z_k^\circ(\hat{y}_k^\varepsilon)} dx_k + \sum\limits_{j\ne k} \frac{\varepsilon x_k \hat{y}_k^\varepsilon B_{jk}}{x_j} \frac{(Z_k^\circ)'(\hat{y}_k^\varepsilon)}{\left(Z_k^\circ(\hat{y}_k^\varepsilon)\right)^2} dx_j \\
	& \quad \; \, = \frac{1}{Z_k^\circ(\hat{\sfy}_k^\varepsilon)} dx_k + \sum\limits_{j\ne k} \frac{\varepsilon \sfx_k \hat{\sfy}_k^\varepsilon B_{jk}}{\sfx_j} \frac{(Z_k^\circ)'(\hat{\sfy}_k^\varepsilon)}{Z_k^\circ(\hat{\sfy}_k^\varepsilon)} dx_j.
\end{align*}
If we identify $\Lie \left(\cG_\cA \right)$ and $T^*_{\pi_\cA} L_\cA$ via
\[
	\frac{\partial}{\partial p_j} = dx_j, \> j =1, \ldots, m,
\]
then $(\varphi^1_{\cA,\bfr})^*$ is the dual bundle map of $\Lie \left(\varphi^1_{\cA,\bfr} \right)$.
That is, $(\varphi^1_{\cA,\bfr})^*: T^*_{\pi_\cA} L_\cA \to T^*_{\pi_\cA} L_\cA$ is a Lie algebroid comorphism and its corresponding Lie algebroid morphism is $\Lie \left(\varphi^1_{\cA,\bfr} \right)$, see the discussion preceding Definition~\ref{def:gpdcomor}.
It follows that $(\varphi^1_{\cA,\bfr})^*$ integrates to a groupoid comorphism whose graph coincides with the graph of $\varphi^1_{\cA,\bfr}: \cG_\cA \to \cG_\cA$.

Write $\cG_{\cX',\bfr}$, $\cB_{\cX',\bfr}$, $\cD_{\cX',\bfr}$ and $\cG_{\cA',\bfr}$, $\cB_{\cA',\bfr}$, $\cD_{\cA',\bfr}$ for the analogous groupoids over the cluster charts $L_{\cX',\bfr}$ and $L_{\cA',\bfr}$ with Poisson structures related to those on $L_{\cX,\bfr}$ and $L_{\cA,\bfr}$ by mutation in direction $k$.
\begin{lemma}
  \label{le:tropical groupoid transformations}
  For $1\le k\le n$ and $\varepsilon\in\{\pm1\}$, there are symplectic groupoid morphisms 
  \begin{align*}
    \tau_{\cX,\bfr}^{k,\varepsilon}&:\cG^\times_{\cX,\bfr}\to\cG^\times_{\cX',\bfr},& \tau_{\cX,\bfr}^{k,\varepsilon}&:\cB^\times_{\cX,\bfr}\to\cB^\times_{\cX',\bfr},& \tau_{\cX,\bfr}^{k,\varepsilon}&:\cD^\times_{\cX,\bfr}\to\cD^\times_{\cX',\bfr}\\
    \tau_{\cA,\bfr}^{k,\varepsilon}&:\cG^\times_{\cA,\bfr}\to\cG^\times_{\cA',\bfr},& \tau_{\cA,\bfr}^{k,\varepsilon}&:\cB^\times_{\cA,\bfr}\to\cB^\times_{\cA',\bfr},& \tau_{\cA,\bfr}^{k,\varepsilon}&:\cD^\times_{\cA,\bfr}\to\cD^\times_{\cA',\bfr}
  \end{align*}
  lifting respectively the Poisson morphisms $\tau_{\cX,\bfr}^{k,\varepsilon}:L^\times_{\cX,\bfr}\to L^\times_{\cX',\bfr}$ and $\tau_{\cA,\bfr}^{k,\varepsilon}:L^\times_{\cA,\bfr}\to L^\times_{\cA',\bfr}$ of Lemma~\ref{le:tropical cluster transformations}.
  These are given on fiber coordinates by
  \begin{align}
    \nonumber
    (\tau_{\cX,\bfr}^{k,\varepsilon})^*(q'_\ell)
    &=\begin{cases} 
      -q_k y_k^2 + \sum\limits_{\ell'=1}^n [\varepsilon r_k B_{k\ell'}]_+ q_{\ell'} y_{\ell'} y_k & \text{if $\ell=k$;}\\ 
      q_\ell y_k^{-[\varepsilon r_k B_{k\ell}]_+} & \text{if $\ell\ne k$;}
    \end{cases}\\
    \nonumber
    (\tau_{\cX,\bfr}^{k,\varepsilon})^*(v'_\ell)
    &=\begin{cases} 
      y_k\Big[(v_k y_k + 1)^{-1}\prod\limits_{\ell'=1}^n (v_{\ell'} y_{\ell'} + 1)^{[\varepsilon r_k B_{k\ell'}]_+} -1\Big] & \text{if $\ell=k$;}\\
      v_\ell y_k^{-[\varepsilon r_k B_{k\ell}]_+} & \text{if $\ell\ne k$;}
    \end{cases}\\
    \nonumber
    (\tau_{\cX,\bfr}^{k,\varepsilon})^*(t'_\ell)
    &=\begin{cases} 
      t_k^{-1}\prod\limits_{\ell'=1}^n t_{\ell'}^{[\varepsilon r_k B_{k\ell'}]_+} & \text{if $\ell=k$;}\\
      t_\ell & \text{if $\ell\ne k$;}
    \end{cases}\\
    \label{eq:tropical GA transformation}
    (\tau_{\cA,\bfr}^{k,\varepsilon})^*(p'_j)&=
    \begin{cases} 
      -p_k x_k^2 \prod\limits_{i=1}^m x_i^{-[-\varepsilon B_{ik} r_k]_+} & \text{if $j=k$;}\\ 
      p_j + [-\varepsilon B_{jk} r_k]_+ \frac{p_k x_k}{x_j} & \text{if $j\ne k$;}
    \end{cases}\\
    \nonumber
    (\tau_{\cA,\bfr}^{k,\varepsilon})^*(u'_j)
    &=\begin{cases} 
      x_k \left[ (u_k x_k +1)^{-1} -1\right] \left(\prod_{i=1}^m x_i^{-[-\varepsilon B_{ik} r_k]_+}\right) & \text{if $j=k$;}\\ 
      x_j^{-1}\big[(u_j x_j + 1) (u_k x_k + 1)^{[-\varepsilon B_{jk} r_k]_+}-1\big] & \text{if $j\ne k$.}
    \end{cases}\\
    \nonumber
    (\tau_{\cA,\bfr}^{k,\varepsilon})^*(s'_j)
    &=\begin{cases} 
      s_k^{-1} & \text{if $j=k$;}\\ 
      s_j s_k^{[-\varepsilon B_{jk} r_k]_+} & \text{if $j\ne k$;}
    \end{cases}\\
    \nonumber
    (\tau_{-,\bfr}^{k,\varepsilon})^*(a'_{\ell,j})
    &=
    \begin{cases}
      a^*_{k,j} & \text{if $\ell=k$;}\\
      a_{\ell,j} & \text{if $\ell\ne k$.}
    \end{cases}
  \end{align}
\end{lemma}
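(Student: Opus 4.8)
The plan is to exploit the fact that the base maps $\tau_{\cX,\bfr}^{k,\varepsilon}$ and $\tau_{\cA,\bfr}^{k,\varepsilon}$ of Lemma~\ref{le:tropical cluster transformations} are \emph{honest} Poisson diffeomorphisms of the relevant orthants: on the $L^\times_\cX$- and $L^\times_\cA$-factors they are monomial coordinate changes whose exponent matrix $E_{\bfr,k,\varepsilon}$ is its own inverse (in particular invertible, with $\det=-1$), and on the $L^\times_\bfr$-factor they are the coefficient-reversing star-involution. By the discussion following Definition~\ref{def:gpdcomor} (together with Proposition~\ref{prop:gpd identity} and the fact that a Poisson diffeomorphism induces both a Lie algebroid morphism and a comorphism), each such base diffeomorphism lifts uniquely to a symplectic groupoid \emph{isomorphism} of the source-simply-connected groupoids, so the maps $\tau_{\cX,\bfr}^{k,\varepsilon}:\cG^\times_{\cX,\bfr}\to\cG^\times_{\cX',\bfr}$ and $\tau_{\cA,\bfr}^{k,\varepsilon}:\cG^\times_{\cA,\bfr}\to\cG^\times_{\cA',\bfr}$ exist and are unique; the analogous maps on $\cB^\times$ and $\cD^\times$ are then obtained by transport along $\kappa$, $\nu$, $\lambda$ of diagram~\eqref{eq:LGpdCD}, which restrict to groupoid isomorphisms over the orthants. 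Thus the only real task is to identify the coordinate formulas.

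I would carry out the verification in the $\cD$-picture, where all structure maps (Theorem~\ref{thm:SymDBLoc}) are monomial. Passing to logarithmic coordinates $x_i=e^{\chi_i}$, $s_i=e^{\xi_i}$ as in the Remark after Theorem~\ref{thm:SymDBLoc}, and analogously on the $\cX$-side, the claimed formulas become \emph{linear}: on $\cD^\times_{\cA,\bfr}$ the map acts by the matrix $E_{\bfr,k,\varepsilon}^T$ on the $\chi$'s and by $E_{\bfr,k,\varepsilon}$ on the $\xi$'s, together with the sign-reversal on the $k$-th $L_\bfr$-logarithm. Because $E_{\bfr,k,\varepsilon}$ is its own inverse, compatibility with the symplectic forms is precisely the relation $\Omega'=E_{\bfr,k,\varepsilon}^T\Omega E_{\bfr,k,\varepsilon}$ defining the mutated matrix; compatibility with $\alpha$ and $\beta$ unwinds, after exponentiating back, to exactly the monomial identities already verified in the proof of Lemma~\ref{le:tropical cluster transformations}; and compatibility with $\mathtt{1}$, the multiplication, and the inversion is the same bookkeeping of these identities together with the matrix-mutation rule for $\tilde B$. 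The $\cX$-version follows verbatim under the replacement of $\Omega_{ij}$ by $d_kB_{k\ell}$ and $m$ by $n$.

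Having pinned down $\tau$ on $\cD^\times$, I would recover the $\cG^\times$- and $\cB^\times$-formulas by pulling back along $\lambda$ and $\nu$. Since $\lambda^*(s_i)=e^{x_ip_i}$, $\nu^*(s_i)=x_iu_i+1$, and all three comparison maps are the identity on the base, each statement $(\tau_{-,\bfr}^{k,\varepsilon})^*(s'_j)=(\text{monomial in the }s_i)$ is equivalent to $(\tau_{-,\bfr}^{k,\varepsilon})^*(x'_jp'_j)=(\text{the corresponding linear combination of the }x_ip_i)$ on $\cG^\times$ and to $(\tau_{-,\bfr}^{k,\varepsilon})^*(x'_ju'_j+1)=(\text{the corresponding monomial in the }x_iu_i+1)$ on $\cB^\times$; dividing through by $(\tau_{-,\bfr}^{k,\varepsilon})^*(x'_j)$, which is the base formula from Lemma~\ref{le:tropical cluster transformations}, produces the stated expressions for $q'_\ell, v'_\ell, p'_j, u'_j$. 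As a cross-check, the $\cG_\cA$-formula can also be read off the Lie algebroid directly, as in the explicit computation preceding this lemma: the pullback of $\tau_{\cA,\bfr}^{k,\varepsilon}$ on $1$-forms identifies the induced algebroid comorphism with the one whose graph is $x'_kp'_k=-x_kp_k$ together with $x'_jp'_j=x_jp_j+[-\varepsilon B_{jk}r_k]_+\,x_kp_k$ for $j\ne k$, and this integrates verbatim to the groupoid by invariance of $x_ip_i$ under the exponential map of Corollary~\ref{cor:exp}. Finally, the $T^*L^\times_\bfr$-factor is carried along by the canonical cotangent lift of the star-involution on $L_\bfr$, which sends $a_{k,j}\mapsto a^*_{k,j}$, fixes $a_{\ell,j}$ for $\ell\ne k$, and is a symplectic groupoid automorphism of that factor, so it assembles with the above into the asserted morphisms.

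The main obstacle I anticipate is purely organizational: once the single target-compatibility computation on $\cD^\times$ is done, everything propagates mechanically through $\kappa$, $\nu$, $\lambda$ and through the $\cX\leftrightarrow\cA$ dictionary, so the difficulty lies entirely in keeping the six coordinate systems and the two matrix-mutation conventions ($\tilde B\to\tilde B'$ and $\Omega\to\Omega'=E_{\bfr,k,\varepsilon}^T\Omega E_{\bfr,k,\varepsilon}$) aligned while checking that the $\cD$-formula intertwines the target maps.
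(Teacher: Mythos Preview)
Your proposal is correct and in fact contains the paper's own argument as your ``cross-check'': the paper proves only the $\cG_\cA$-formula \eqref{eq:tropical GA transformation} by computing the pullback $\tau^*$ on $1$-forms, reading off the algebroid comorphism whose graph is $x'_kp'_k=-x_kp_k$ and $x'_jp'_j=x_jp_j+[-\varepsilon B_{jk}r_k]_+\,x_kp_k$ for $j\ne k$, and then invoking the invariance of $x_ip_i$ under the exponential map (Corollary~\ref{cor:exp}) to pass to the groupoid; the remaining five coordinate systems are declared to follow from diagram~\eqref{eq:LGpdCD} and Proposition~\ref{prop:ensemblegpd}.

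Your primary route differs only in the choice of anchor: you begin with $\cD^\times$, where in logarithmic coordinates the map is the linear transformation $(\chi,\xi)\mapsto(E_{\bfr,k,\varepsilon}^T\chi,\,E_{\bfr,k,\varepsilon}\xi)$ and the symplectic/groupoid compatibilities reduce to the matrix identities $E_{\bfr,k,\varepsilon}^2=I$ and $\Omega'=E_{\bfr,k,\varepsilon}^T\Omega E_{\bfr,k,\varepsilon}$, and then transport to $\cG^\times$ and $\cB^\times$ via $\lambda$ and $\nu$. This is a perfectly good alternative; it trades the algebroid-comorphism computation for elementary linear algebra at the cost of first verifying directly that the $\cD$-formula is a groupoid morphism (source, target, unit, multiplication), whereas the paper's route gets the groupoid-morphism property for free from the general lifting theory and only needs to identify the formula. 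Either way the transport step through \eqref{eq:LGpdCD} is the same.
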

\begin{proof}
  We prove \eqref{eq:tropical GA transformation}.
  The other groupoid maps follows from \eqref{eq:LGpdCD} and/or Proposition~\ref{prop:ensemblegpd}.
  To ease the notation, we fix $\bfr$ and drop all subscripts and supscripts.
  Taking derivatives of
  \begin{align*}
    \tau:~ & L^\times_{\cA}\to L^\times_{\cA'}, \\
      & x'_k =x_k^{-1}\prod\limits_{i=1}^m x_i^{[-\varepsilon B_{ik}r_k]_+}, \\
      & x'_j = x_j, \qquad \text{if } j \ne k,
  \end{align*}
  we obtain the pullback map
  \begin{align*}
    \tau^*:~ & \Omega^1(L^\times_{\cA'}) \to \Omega^1(L^\times_{\cA}), \\
      & dx'_k = - x_k^{-2}\prod\limits_{i=1}^m x_i^{[- \varepsilon B_{jk} r_k]_+} dx_k + \sum\limits_{j=1}^m \frac{[- \varepsilon B_{jk} r_k]_+}{x_j x_k} \prod\limits_{i=1}^m x_i^{[- \varepsilon B_{ik} r_k]_+} dx_j, \\
      & dx'_j = dx_j \qquad \text{if}~j\ne k, \\
      & \sum\limits_{j=1}^m p'_j dx'_j = - \frac{p'_k}{x_k^2}\prod\limits_{i=1}^m x_i^{[- \varepsilon B_{ik} r_k]_+} dx_k + \sum\limits_{j\ne k} \left(\frac{p'_k [- \varepsilon B_{jk} r_k]_+ }{x_jx_k} \prod\limits_{i=1}^m x_i^{[- \varepsilon B_{ik} r_k]_+}  + p'_j\right)dx_j
  \end{align*}
  or as a bundle map
  \begin{align*}
    \tau^*:~ & \tau^! T^* L^\times_{\cA'} \to T^* L^\times_{\cA},  \\
      & p_k = - \frac{x'_k p'_k}{x_k}, \\
      & p_j = \frac{x'_kp'_k [- \varepsilon B_{jk} r_k]_+ }{x_j}   + p'_j \qquad \text{if}~j\ne k.
  \end{align*}
  The graph of the Lie algebroid comorphism $\tau^*$ from $T^*_{\pi_{\cA}} L_{\cA}$ to $T^*_{\pi_{\cA'}} L_{\cA'}$ is defined by
  \begin{align*}
    & x'_k p'_k = - x_k p_k , \\
    & x'_j p'_j = x_kp_k [- \varepsilon B_{jk} r_k]_+  + x_jp_j \quad \text{if}~j\ne k.
  \end{align*}
  By Lemma~\ref{lemma:PoisSp} and Corollary~\ref{cor:exp}, both $x_j p_j$ and $x'_j p'_j$ are invariant under the exponential map, so \eqref{eq:tropical GA transformation} follows.
\end{proof}

\begin{theorem}
  \label{th:groupoid mutation}
  There are morphisms of Poisson groupoids $\mu_{\cX,\bfr}^{k,\varepsilon}$ and $\mu_{\cA,\bfr}^{k,\varepsilon}$:
  \begin{align*}
    \mu_{\cX,\bfr}^{k,\varepsilon}&:\cG^\times_{\cX,\bfr}\to\cG^\times_{\cX',\bfr},& \mu_{\cX,\bfr}^{k,\varepsilon}&:\cB^\times_{\cX,\bfr}\to\cB^\times_{\cX',\bfr},& \mu_{\cX,\bfr}^{k,\varepsilon}&:\cD^\times_{\cX,\bfr}\to\cD^\times_{\cX',\bfr}\\
    \mu_{\cA,\bfr}^{k,\varepsilon}&:\cG^\times_{\cA,\bfr}\to\cG^\times_{\cA',\bfr},& \mu_{\cA,\bfr}^{k,\varepsilon}&:\cB^\times_{\cA,\bfr}\to\cB^\times_{\cA',\bfr},& \mu_{\cA,\bfr}^{k,\varepsilon}&:\cD^\times_{\cA,\bfr}\to\cD^\times_{\cA',\bfr}
  \end{align*}
  defined as the compositions $\tau_{-,\bfr}^{k,\varepsilon}\circ\varphi^1_{-,\bfr}$ given on fiber coordinates by
  \begin{align}
    \nonumber
    (\mu_{\cX,\bfr}^{k,\varepsilon})^*(q'_\ell)
    &=\begin{cases} 
      -q_k y_k^2 + \sum\limits_{\ell'=1}^n [\varepsilon r_k B_{k\ell'}]_+ q_{\ell'} y_{\ell'} y_k + \frac{y_k}{d_k}\log\left(\frac{Z_k^\circ\left(y_k^\varepsilon e^{\varepsilon\sum_{\ell'=1}^n d_{\ell'} B_{\ell' k}q_{\ell'} y_{\ell'}}\right)}{Z_k^\circ(y_k^\varepsilon)}\right) & \text{if $\ell=k$;}\\ 
      q_\ell y_k^{-[\varepsilon r_k B_{k\ell}]_+} Z_k^\circ(y_k^\varepsilon)^{B_{k\ell}} & \text{if $\ell\ne k$;}
    \end{cases}\\
    \nonumber
    (\mu_{\cX,\bfr}^{k,\varepsilon})^*(v'_\ell)
    &=\begin{cases} 
      y_k \left[ (v_k y_k + 1)^{-1} \left(\prod\limits_{\ell'=1}^n (v_{\ell'} y_{\ell'} +1)^{[\varepsilon r_k B_{k\ell'}]_+}\right)\left(\frac{Z_k^\circ\left(y_k^\varepsilon \prod_{\ell'=1}^n (v_{\ell'} y_{\ell'} + 1)^{\varepsilon d_{\ell'} B_{\ell' k}}\right)}{Z_k^\circ(y_k^\varepsilon)}\right)^{1/d_k} - 1\right] & \text{if $\ell=k$;}\\ 
      v_\ell y_k^{-[\varepsilon r_k B_{k\ell}]_+} Z_k^\circ(y_k^\varepsilon)^{B_{k\ell}} & \text{if $\ell\ne k$;}
    \end{cases}\\
    \nonumber
    (\mu_{\cX,\bfr}^{k,\varepsilon})^*(t'_\ell)
    &=\begin{cases} 
      t_k^{-1}\prod\limits_{\ell'=1}^n t_{\ell'}^{[\varepsilon r_k B_{k\ell'}]_+}\left(\frac{Z_k^\circ\left(y_k^\varepsilon \prod_{\ell'=1}^n t_{\ell'}^{\varepsilon d_{\ell'} B_{\ell' k}}\right)}{Z_k^\circ(y_k^\varepsilon)}\right)^{1/d_k} & \text{if $\ell=k$;}\\
      t_\ell & \text{if $\ell\ne k$;}
    \end{cases}\\
    \nonumber
    (\mu_{\cA,\bfr}^{k,\varepsilon})^*(p'_j)&=
    \begin{cases} 
      -p_k x_k^2 \left(\prod_{i=1}^m x_i^{-[-\varepsilon B_{ik} r_k]_+}\right) Z_k^\circ(\hat y_k^\varepsilon)^{-1} & \text{if $j=k$;}\\ 
      p_j + [-\varepsilon B_{jk} r_k]_+ \frac{p_k x_k}{x_j} - \frac{B_{jk}}{d_k x_j}\log\left(\frac{Z_k^\circ\left(\hat y_k^\varepsilon e^{-\varepsilon d_kp_kx_k}\right)}{Z_k^\circ(\hat y_k^\varepsilon)}\right) & \text{if $j\ne k$;}
    \end{cases}\\
    \nonumber
    (\mu_{\cA,\bfr}^{k,\varepsilon})^*(u'_j)
    &=\begin{cases}
      x_k \left[ (u_k x_k +1)^{-1} -1\right] \left(\prod_{i=1}^m x_i^{-[-\varepsilon B_{ik} r_k]_+}\right) Z_k^\circ(\hat y_k^\varepsilon)^{-1} & \text{if $j=k$;}\\ 
      x_j^{-1} \left[ (u_j x_j + 1) (u_k x_k + 1)^{[-\varepsilon B_{jk} r_k]_+} \left(\frac{Z_k^\circ\left(\hat y_k^\varepsilon (u_k x_k +1)^{-\varepsilon d_k}\right)}{Z_k^\circ(\hat y_k^\varepsilon)}\right)^{-B_{jk}/d_k} - 1\right] & \text{if $j\ne k$;}
    \end{cases}\\
    \label{eq:DA mutation}
    (\mu_{\cA,\bfr}^{k,\varepsilon})^*(s'_j)
    &=\begin{cases} 
      s_k^{-1} & \text{if $j=k$;}\\ 
      s_j s_k^{[-\varepsilon B_{jk} r_k]_+} \left(\frac{Z_k^\circ\left(\hat y_k^\varepsilon s_k^{-\varepsilon d_k}\right)}{Z_k^\circ(\hat y_k^\varepsilon)}\right)^{-B_{jk}/d_k} & \text{if $j\ne k$.}
    \end{cases}
  \end{align}
  The coordinates $a_{\ell,j}$ on $T^*L^\times_\bfr$ transform according to
  \begin{align*}
    (\mu_{\cX,\bfr}^{k,\varepsilon})^*(a'_{\ell,j})
    &=\begin{cases}
      a^*_{k,j}+\frac{\varepsilon}{d_k}\int_{y_k^\varepsilon}^{\beta^*(y_k^\varepsilon)} \frac{u^{\varepsilon j^*-1}}{Z_k(u^\varepsilon)}du & \text{if $\ell=k$;}\\
      a_{\ell,j} & \text{if $\ell\ne k$;}
    \end{cases}\\
    (\mu_{\cA,\bfr}^{k,\varepsilon})^*(a'_{\ell,j})
    &=\begin{cases}
      a^*_{k,j}+\frac{\varepsilon}{d_k}\int_{\hat y_k^\varepsilon}^{\beta^*(\hat y_k^\varepsilon)} \frac{u^{\varepsilon j^*-1}}{Z_k(u^\varepsilon)}du & \text{if $\ell=k$;}\\
      a_{\ell,j} & \text{if $\ell\ne k$;}
    \end{cases}
  \end{align*}
  where $\beta$ is the appropriate target map and $j^*=r_k-j$.
\end{theorem}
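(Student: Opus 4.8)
The plan is to \emph{define} $\mu^{k,\varepsilon}_{-,\bfr}$ as the composition $\tau^{k,\varepsilon}_{-,\bfr}\circ\varphi^1_{-,\bfr}$ and to argue in two stages: that this composite is a morphism of Poisson groupoids lifting the base cluster mutation, and that its action on fibre coordinates is as displayed. For the first stage, note that $\varphi^1_{-,\bfr}$ from Corollary~\ref{cor:groupoid hamiltonian flows} is the time-$1$ flow of the multiplicative Hamiltonian vector field attached to the multiplicative function $H^{k,\varepsilon}_{-,\bfr}=\partial h^{k,\varepsilon}_{-,\bfr}$; by the principle recalled after Proposition~\ref{prop:gpd identity}, such a flow is an automorphism of the symplectic groupoid ($\cG^\times_{-,\bfr}$, $\cB^\times_{-,\bfr}$, resp.\ $\cD^\times_{-,\bfr}$) and lifts the Hamiltonian flow $\varphi^1_{-,\bfr}$ on the base. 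Composing with the symplectic groupoid morphism $\tau^{k,\varepsilon}_{-,\bfr}$ of Lemma~\ref{le:tropical groupoid transformations}, which lifts the Poisson map $\tau^{k,\varepsilon}_{-,\bfr}$ of Lemma~\ref{le:tropical cluster transformations}, the composite is again a morphism of symplectic---hence Poisson---groupoids, and by Corollary~\ref{cor:cluster mutation} it lifts $\tau^{k,\varepsilon}_{-,\bfr}\circ\varphi^1_{-,\bfr}=\mu^{k,\varepsilon}_{-,\bfr}$.

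For the second stage I would compute $(\mu^{k,\varepsilon}_{-,\bfr})^*=(\varphi^1_{-,\bfr})^*\circ(\tau^{k,\varepsilon}_{-,\bfr})^*$ on the fibre coordinates, doing this explicitly for the three $\cG$-groupoids and deducing the $\cB$- and $\cD$-formulas by transporting along the maps $\kappa,\nu,\lambda$ of diagram~\eqref{eq:LGpdCD}, exactly as in the proofs of Proposition~\ref{prop:ensemblegpd} and Lemma~\ref{le:tropical groupoid transformations} (each substitution could equally well be carried out directly). The substitution rests on a short list of facts. From Corollary~\ref{cor:time-one flows}, the base action of $\varphi^1_{\cA,\bfr}$ rescales $x_k$ by $Z_k^\circ(\hat y_k^\varepsilon)^{-1}$ and fixes the other $x_j$, while that of $\varphi^1_{\cX,\bfr}$ rescales each $y_\ell$ by $Z_k^\circ(y_k^\varepsilon)^{-B_{k\ell}}$. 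From Corollary~\ref{cor:groupoid hamiltonian flows}, the quantities $\hat y_k$ (resp.\ $y_k$) and the monomials $e^{-\varepsilon d_kp_kx_k}$, $(u_kx_k+1)^{-\varepsilon d_k}$, $s_k^{-\varepsilon d_k}$, $e^{\varepsilon\sum_{\ell'}d_{\ell'}B_{\ell'k}q_{\ell'}y_{\ell'}}$, $\prod_{\ell'}(v_{\ell'}y_{\ell'}+1)^{\varepsilon d_{\ell'}B_{\ell'k}}$, $\prod_{\ell'}t_{\ell'}^{\varepsilon d_{\ell'}B_{\ell'k}}$ occurring inside the arguments of $Z_k^\circ$ are conserved by $\varphi^1_{-,\bfr}$; and since $B_{kk}=0$, on the $\cA$-side the combined coordinate $p_kx_k$ (resp.\ $u_kx_k+1$, $s_k$) is fixed by $\varphi^1_{\cA,\bfr}$, whereas on the $\cX$-side the combined coordinate $q_ky_k$ (resp.\ $v_ky_k+1$, $t_k$) picks up precisely the logarithmic correction appearing in the $\ell=k$ case of $(\mu^{k,\varepsilon}_{\cX,\bfr})^*$. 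Feeding the tropical formulas of Lemma~\ref{le:tropical groupoid transformations} through $(\varphi^1_{-,\bfr})^*$ and simplifying with these facts should produce the displayed formulas; for the $a_{\ell,j}$ coordinates I would combine $(\tau^{k,\varepsilon}_{-,\bfr})^*(a'_{k,j})=a^*_{k,j}=a_{k,j^*}$ with the additive shift of $a_{k,j^*}$ under $(\varphi^1_{-,\bfr})^*$, which contributes the integral $\tfrac{\varepsilon}{d_k}\int_*^*\tfrac{u^{\varepsilon j^*-1}}{Z_k(u^\varepsilon)}\,du$ with source-to-target bounds inherited from $H^{k,\varepsilon}_{-,\bfr}$.

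Morphism-hood carries no real difficulty once the first stage is in place, so the work---and the step I expect to be most delicate---lies entirely in the exponent bookkeeping: keeping track of the piecewise-linear exponents $[\varepsilon r_kB_{k\ell}]_+$, $[-\varepsilon B_{jk}r_k]_+$ and the linear exponents $d_{\ell'}B_{\ell'k}$ inside the $Z_k^\circ$'s so as to verify that pulling each monomial of $(\tau^{k,\varepsilon}_{-,\bfr})^*$ back through $\varphi^1_{-,\bfr}$ produces exactly one new factor $Z_k^\circ(\hat y_k^\varepsilon)^{B_{k\ell}}$ (resp.\ $Z_k^\circ(\hat y_k^\varepsilon)^{-B_{jk}/d_k}$, and the analogous factors for the $\cB$- and $\cD$-cases) for $\ell\ne k$ (resp.\ $j\ne k$), with no spurious factors, and that the $\ell=k$ and $j=k$ cases assemble correctly. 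Once these identifications are made, matching each of the six groupoids against the displayed formulas is a direct, if lengthy, verification, which I would relegate to an exercise as in the preceding lemmas.
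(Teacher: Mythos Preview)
Your proposal is correct and follows essentially the same approach as the paper: the paper's proof is the single sentence ``This is immediate from Corollary~\ref{cor:groupoid hamiltonian flows} and Lemma~\ref{le:tropical groupoid transformations},'' which is exactly your two-stage plan of composing the multiplicative Hamiltonian flow with the tropical groupoid morphism. Your elaboration on the conserved quantities and the exponent bookkeeping simply spells out what the paper leaves implicit in ``immediate.''
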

\begin{proof}
  This is immediate from Corollary~\ref{cor:groupoid hamiltonian flows} and Lemma~\ref{le:tropical groupoid transformations}.
\end{proof}

The various groupoid maps lifting the Poisson maps in \eqref{eq:PoisCD} are summarized in the following commutative diagram. The groupoid morphisms are represented by solid arrows and the groupoid comorphisms are represented by dotted arrows.
\begin{equation*} 
    \xymatrix{
      \cG_{\cA} \ar@{.>}[d]^{\rho} \ar[r]_{\varphi_{\cA}} \ar@/^1pc/[rr]|-{\mu_{\cA}} & \cG_{\cA} \ar@{.>}[d]^{\rho} \ar[r]_{\tau_{\cA}} & \cG'_{\cA} \ar@{.>}[d]^{\rho} 
	&& \cD_{\cA} \ar@{.>}[d]^{\rho} \ar[r]_{\varphi_{\cA}} \ar@/^1pc/[rr]|-{\mu_{\cA}} & \cD_{\cA} \ar@{.>}[d]^{\rho} \ar[r]_{\tau_{\cA}} & \cD'_{\cA} \ar@{.>}[d]^{\rho} \\
      \cG_{\cX} \ar[r]^{\varphi_{\cX}} \ar@/_1pc/[rr]|-{\mu_{\cX}} & \cG_{\cX} \ar[r]^{\tau_{\cX}} \ar@{}[ddr]^(.325){}="a"^(.775){}="b" \ar@<1ex> "a";"b"^-{\kappa_\cA} \ar@<-1ex> "a";"b"_-{\kappa_\cX} & \cG'_{\cX} \ar@{}[rr]^(.3){}="e"^(.7){}="f" \ar@<5ex> "e";"f"^-{\lambda_\cA} \ar@<3ex> "e";"f"_-{\lambda_\cX}
	&& \cD_{\cX} \ar[r]^{\varphi_{\cX}} \ar@/_1pc/[rr]|-{\mu_{\cX}} & \cD_{\cX} \ar[r]^{\tau_{\cX}} & \cD'_{\cX} \\
	&&&&&&\\
	&&\cB_{\cA} \ar@{.>}[d]^{\rho} \ar[r]_{\varphi_{\cA}} \ar@/^1pc/[rr]|-{\mu_{\cA}} & \cB_{\cA} \ar@{.>}[d]^{\rho} \ar[r]_{\tau_{\cA}} & \cB'_{\cA} \ar@{.>}[d]^{\rho} \ar@{}[uur]^(.2){}="c"^(.65){}="d" \ar@<1ex> "c";"d"^-{\nu_\cA} \ar@<-1ex> "c";"d"_-{\nu_\cX}  \\
      &&\cB_{\cX} \ar[r]^{\varphi_{\cX}} \ar@/_1pc/[rr]|-{\mu_{\cX}} & \cB_{\cX} \ar[r]^{\tau_{\cX}} & \cB'_{\cX}
    }
\end{equation*}
\begin{remark}
  \label{rem:extended groupoid charts}
  While the formulas for the groupoid mutations were only defined over the positive real orthants $L^\times_{\cX,\bfr}$ and $L^\times_{\cA,\bfr}$, they can often also be used to glue the various groupoids over larger (sometimes dense) subsets of $L_{\cX,\bfr}$ and $L_{\cA,\bfr}$ as in Remark~\ref{rem:extended cluster charts}.
  However, certain additional subtleties must be overcome in the groupoid case:
  \begin{enumerate}
    \item In the real case, the mutation rules for $q'_\ell$ and $p'_j$ can only be extended to the source fibers over the coordinate hyperplanes in $L_{\cX,\bfr}$ and $L_{\cA,\bfr}$ since only the logarithms of positive real numbers are again real.
      In the complex case, these mutation rules can be extended to open dense subsets of $\cG_{\cX,\bfr}$ and $\cG_{\cA,\bfr}$, namely to the loci where $\frac{Z_k^\circ\left(y_k^\varepsilon e^{\varepsilon\sum_{\ell'=1}^n d_{\ell'} B_{\ell' k}q_{\ell'} y_{\ell'}}\right)}{Z_k^\circ(y_k^\varepsilon)}$ and $\frac{Z_k^\circ\left(\hat y_k^\varepsilon e^{-\varepsilon d_kp_kx_k}\right)}{Z_k^\circ(\hat y_k^\varepsilon)}$ are defined and nonzero.
      Indeed, although $\log(-)$ is multivalued as a function on $\CC_\times$, the functions $(\mu_{\cX,\bfr}^{k,\varepsilon})^*(q'_\ell)$ and $(\mu_{\cA,\bfr}^{k,\varepsilon})^*(p'_j)$ are well-defined on these dense subsets of $\cG_{\cX,\bfr}$ and $\cG_{\cA,\bfr}$ respectively.
      To see this, note that $\mu_{\cX,\bfr}^{k,\varepsilon}$ and $\mu_{\cA,\bfr}^{k,\varepsilon}$ are groupoid morphisms and hence must carry the identity image to the identity image.
      This in particular fixes the branch of $\log(-)$ to choose when computing the values of $(\mu_{\cX,\bfr}^{k,\varepsilon})^*(q'_\ell)$ and $(\mu_{\cA,\bfr}^{k,\varepsilon})^*(p'_j)$ at the origin of each source fiber.
      Thus, by simple-connectedness, we are able to fix the choice of branch cut to use when computing their values at arbitrary points of the source fibers. 
    \item Again in the real case, the existence of an even skew-symmetrizer $d_k$ similarly restricts the extendability of the mutation rules for $v'_\ell$, $t'_\ell$ and $u'_j$, $s'_j$ to the source fibers over the coordinate hyperplanes in $L_{\cX,\bfr}$ and $L_{\cA,\bfr}$.
      In the complex case or when all skew-symmetrizers are odd, we must avoid the loci where $\frac{Z_k^\circ\left(y_k^\varepsilon \prod_{\ell'=1}^n (v_{\ell'} y_{\ell'} + 1)^{\varepsilon d_{\ell'} B_{\ell' k}}\right)}{Z_k^\circ(y_k^\varepsilon)}$, $\frac{Z_k^\circ\left(y_k^\varepsilon \prod_{\ell'=1}^n t_{\ell'}^{\varepsilon d_{\ell'} B_{\ell' k}}\right)}{Z_k^\circ(y_k^\varepsilon)}$, $\frac{Z_k^\circ\left(\hat y_k^\varepsilon (u_k x_k +1)^{-\varepsilon d_k}\right)}{Z_k^\circ(\hat y_k^\varepsilon)}$, and $\frac{Z_k^\circ\left(\hat y_k^\varepsilon s_k^{-\varepsilon d_k}\right)}{Z_k^\circ(\hat y_k^\varepsilon)}$ are undefined or zero.
      Away from these loci in the complex case, the identity image can again be used to define a splitting of the $d_k$-fold covering map from $\CC_\times$ to $\CC_\times$.
    \item Finally, the mutations $\mu_{\cA,\bfr}^{k,\varepsilon}$ appear to be undefined on $p'_j$ and $u'_j$ for $j\ne k$ when $x_j=0$.
      However, this is a removable singularity and these maps are actually continuous when appropriate values are assigned.
      To see this, we first observe that $[-\varepsilon B_{jk} r_k]_+ p_k x_k - \frac{B_{jk}}{d_k}\log\left(\frac{Z_k^\circ(\hat y_k^\varepsilon e^{-\varepsilon d_kp_kx_k})}{Z_k^\circ(\hat y_k^\varepsilon)}\right)=0$ for $j\ne k$ and $x_j=0$ as follows:
      \begin{itemize}
        \item If $\varepsilon B_{jk}>0$, the first term above is clearly zero and $x_j$ appears in $\hat y_k^\varepsilon$ with a positive exponent.
          Using that $x_j=0$, it follows that the $\log$ term reduces to zero as well since $Z_k^\circ$ has nonzero constant term.
        \item When $B_{jk}=0$ there is nothing to show.
        \item If $\varepsilon B_{jk}<0$, the first term does not vanish but $x_j$ appears in $\hat y_k^\varepsilon$ with a negative exponent.
          Using that $x_j=0$, the term inside the logarithm reduces to $e^{-\varepsilon d_k r_k p_k x_k}$ and thus we again get zero.
      \end{itemize}
      Therefore, we may take the limit as $x_j\to0$ in these formulas and set
      \begin{align*}
        (\mu_{\cA,\bfr}^{k,\varepsilon}|_{x_j=0})^*(p'_j)
        &=p_j+\lim_{x_j \to 0}\frac{[-\varepsilon B_{jk} r_k]_+ p_k x_k - \frac{B_{jk}}{d_k}\log\left(\frac{Z_k^\circ(\hat y_k^\varepsilon e^{-\varepsilon d_kp_kx_k})}{Z_k^\circ(\hat y_k^\varepsilon)}\right)}{x_j}\\
        &=p_j-\frac{\varepsilon B_{jk}^2}{d_k} \lim_{x_j\to 0} \frac{ \hat y_k^\varepsilon e^{-\varepsilon d_kp_kx_k} (Z_k^\circ)'(\hat y_k^\varepsilon e^{-\varepsilon d_kp_kx_k}) Z_k^\circ(\hat y_k^\varepsilon) - \hat y_k^\varepsilon Z_k^\circ(\hat y_k^\varepsilon e^{-\varepsilon d_kp_kx_k}) (Z_k^\circ)'(\hat y_k^\varepsilon) }{ x_j Z_k^\circ(\hat y_k^\varepsilon e^{-\varepsilon d_kp_kx_k}) Z_k^\circ(\hat y_k^\varepsilon) }\\
        &=\begin{cases} 
          p_j & \text{if $|B_{jk}| \ne 1$;}\\ 
          p_j - \frac{1}{d_k} \prod_{i\ne j} x_i^{B_{ik}} z_{k,1} (e^{-d_kp_kx_k} - 1) & \text{if $B_{jk} = 1$;}\\ 
          p_j + \frac{1}{d_k} \prod_{i\ne j} x_i^{-B_{ik}} z_{k,r_k-1} (e^{d_kp_kx_k} - 1) & \text{if $B_{jk} = -1$;}
        \end{cases}
      \end{align*}
      where the last equality uses that $a Z_k'(a) Z_k(b) - b Z_k(a) Z_k'(b)$ has lowest degree term $z_{k,1}(a-b)$ and highest degree term $z_{k,r_k-1} a^n b^n (b^{-1}-a^{-1})$ while $a (Z_k^*)'(a) Z_k^*(b) - b Z_k^*(a) (Z_k^*)'(b)$ has lowest degree term $z_{k,r_k-1}(a-b)$ and highest degree term $z_{k,1} a^n b^n (b^{-1}-a^{-1})$.

      Observe that this restricted mutation map is again not defined for $x_i=0$ when $B_{jk}=1$ and $B_{ik} < 0$ nor when $B_{jk} = -1$ and $B_{ik} > 0$.
      Following Remark~\ref{rem:extended cluster charts}, the gluing maps $\mu_{\cA,\bfr}^{k,\varepsilon}:L_{\cA,\bfr}\to L_{\cA',\bfr}$ are also not defined on these loci since the conditions imply $\left(\prod\limits_{i=1}^m x_i^{[-\varepsilon B_{ik}r_k]_+}\right)Z_k^\circ(\hat y_k^\varepsilon)=0$, in particular the gluing of fibers will not be attempted over these loci. 

      We leave it as an exercise for the reader to verify the following mutation rule for the coordinates $u'_j$ over the locus where $x_j=0$:
      \begin{align*}
        (\mu_{\cA,\bfr}^{k,\varepsilon}|_{x_j=0})^*(u'_j)
        &=\begin{cases} 
          u_j & \text{if $|B_{jk}| \ne 1$;}\\ 
          u_j - \frac{1}{d_k} \prod_{i\ne j} x_i^{B_{ik}} z_{k,1} ((u_k x_k+1)^{-d_k} - 1) & \text{if $B_{jk} = 1$;}\\ 
          u_j + \frac{1}{d_k} \prod_{i\ne j} x_i^{-B_{ik}} z_{k,r_k-1} ((u_k x_k +1)^{d_k} - 1) & \text{if $B_{jk} = -1$.}
        \end{cases}
      \end{align*}
  \end{enumerate}
\end{remark}

As in Section~\ref{sec:cluster mutations}, we record iterated mutations using the $n$-regular rooted tree $\TT_n$ with root vertex $t_0$.
That is, over each cluster chart $L^\times_{\cX,\bfr;t}$ and $L^\times_{\cA,\bfr;t}$ we have, respectively, groupoids $\cG^\times_{\cX,\bfr;t}$, $\cB^\times_{\cX,\bfr;t}$, $\cD^\times_{\cX,\bfr;t}$ and $\cG^\times_{\cA,\bfr;t}$, $\cB^\times_{\cA,\bfr;t}$, $\cD^\times_{\cA,\bfr;t}$ with all of the structure as above associated to the pairs $(\tilde B_t,\Omega_t)$ for each $t\in\TT_n$.
Our goal is to glue the various groupoid charts over the cluster charts to get groupoids over $\cX$ and $\cA$.

Given a pair of vertices $t,t'\in\TT_n$, we will define iterated mutations $\mu_{\cX,\bfr}^{t',t}$ and $\mu_{\cA,\bfr}^{t',t}$ of the groupoid charts as we did for the cluster charts by composing the maps in Theorem~\ref{th:groupoid mutation} along the unique path from $t$ to $t'$ in $\TT_n$ (with mutations always taken with respect to the tropical sign from Definition~\ref{def:tropical signs}).
We have seen that certain sequences of cluster mutations simply permute the coordinates of the corresponding cluster charts (c.f.\ Definition~\ref{def:periodicity}), let $(t_1,\ldots,t_w;\sigma)$ be such a periodicity of cluster mutations.
These lift to simple permutations of the coordinates on the corresponding groupoid charts.
\begin{proposition}
  \label{prop:gpd periodicity}
  Given any periodicity $(t_1,\ldots,t_w;\sigma)$ of cluster mutations, the following equalities hold:
  \begin{enumerate}
    \item $\big(\mu_{\cX,\bfr}^{t_w,t_1}\big)^*(q_{\ell;t_w})=q_{\sigma(\ell);t_1}$ for $1\le\ell\le n$;
    \item $\big(\mu_{\cX,\bfr}^{t_w,t_1}\big)^*(t_{\ell;t_w})=t_{\sigma(\ell);t_1}$ for $1\le\ell\le n$;
    \item $\big(\mu_{\cX,\bfr}^{t_w,t_1}\big)^*(v_{\ell;t_w})=v_{\sigma(\ell);t_1}$ for $1\le\ell\le n$;
    \item $\big(\mu_{-,\bfr}^{t_w,t_1}\big)^*(a_{\ell,j;t_w})=a_{\sigma(\ell),j;t_1}$ for $1\le\ell\le n$ and $1\le j\le r_\ell-1$.
    \item $\big(\mu_{\cA,\bfr}^{t_w,t_1}\big)^*(p_{j;t_w})=p_{\sigma(j);t_1}$ for $1\le j\le m$;
    \item $\big(\mu_{\cA,\bfr}^{t_w,t_1}\big)^*(s_{j;t_w})=s_{\sigma(j);t_1}$ for $1\le j\le m$;
    \item $\big(\mu_{\cA,\bfr}^{t_w,t_1}\big)^*(u_{j;t_w})=u_{\sigma(j);t_1}$ for $1\le j\le m$.
  \end{enumerate}
\end{proposition}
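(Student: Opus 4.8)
The plan is to reduce everything, via the commutative diagrams relating the six groupoids and via the separation-of-additions formula, to the statement for the $\cD$-groupoids, where the mutation formulas are purely monomial. The guiding principle is that each fiber coordinate of each groupoid has been arranged (see Corollary~\ref{cor:groupoid hamiltonian flows} and Theorem~\ref{th:groupoid mutation}) so that a natural multiplicative or additive combination of it with powers of the base coordinates is what actually transforms cleanly: on $\cG$ the relevant quantities are $x_jp_j$ (resp.\ $y_\ell q_\ell$); on $\cB$ they are $x_ju_j+1$ (resp.\ $y_\ell v_\ell + 1$); and on $\cD$ they are simply $s_j$ (resp.\ $t_\ell$). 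The maps $\kappa$, $\nu$, $\lambda$ of \eqref{eq:LGpdCD} (and their $\cX$-analogues) are compatible with mutations by the very construction of Theorem~\ref{th:groupoid mutation} — indeed all groupoid mutation formulas for $\cB$ and $\cG$ were \emph{derived} from the $\cD$ case through these maps in Lemma~\ref{le:tropical groupoid transformations} and Proposition~\ref{prop:ensemblegpd} — so it suffices to prove periodicity for the $\cD$-groupoids and then transport. I would also use the $\cX/\cA$ ensemble comorphism $\rho$ of Proposition~\ref{prop:ensemblegpd} together with Remark~\ref{rmk:source-connected lift}; since a periodicity forces $\tilde B_{t_1}$ and $\tilde B_{t_w}$ to be permutation-equivalent, one reduces the $\cX$-statements to the $\cA$-statements (when the exchange matrix has full rank, $\rho$ is a covering on fibers and intertwines the mutations), so in fact the core is the single assertion $\big(\mu_{\cA,\bfr}^{t_w,t_1}\big)^*(s_{j;t_w}) = s_{\sigma(j);t_1}$.

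For that core assertion I would establish an $\cS$-analogue of the separation-of-additions formula \eqref{eq:separation of additions 2}. Concretely, iterating \eqref{eq:DA mutation} and tracking the conserved quantities as in the proof of Corollary~\ref{cor:groupoid hamiltonian flows}, I expect a formula of the shape
\begin{equation*}
  \big(\mu_{\cA,\bfr}^{t,t_0}\big)^*(s_{j;t}) = \left(\prod_{i=1}^m s_{i;t_0}^{G_{ij;t}}\right)\cdot \cF_{\bfr,j;t}\big(\hat s_{1;t_0},\ldots,\hat s_{n;t_0}; \hat y_{1;t_0},\ldots,\hat y_{n;t_0}\big),
\end{equation*}
where $G_{ij;t}$ are the same $\bfg$-vectors as in \eqref{eq:g-matrix mutation}, $\hat s_{k;t_0} = \prod_i s_{i;t_0}^{B_{ik;t_0}}$, and $\cF_{\bfr,j;t}$ is a rational function obeying a recursion parallel to \eqref{eq:F-polynomial mutation} but with the $F$-polynomial's argument $\prod_\ell u_\ell^{\varepsilon C_{\ell k}} F_{\bfr,\ell}^{\varepsilon B_{\ell k}}$ replaced by the appropriate monomial in the $\hat s$'s and by the base $\hat y$'s entering through $Z_k^\circ\big(\hat y_k^\varepsilon \prod_\ell \hat s_\ell^{-\varepsilon d_\ell B_{\ell k}}\big)/Z_k^\circ(\hat y_k^\varepsilon)$. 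The key structural point is that this $\cF$-recursion specializes to the ordinary $F$-polynomial recursion when $\hat s_{k;t_0}\to 1$ (equivalently $s_{i;t_0}\to 1$, which is the identity section), because on the identity image the $\cD$-mutation reduces to the base cluster mutation; and the $\bfg$-vector part is literally unchanged. Then the argument of Corollary~\ref{cor:periodicity} applies verbatim: a periodicity gives $\big(\mu_{\cA,\bfr}^{t_1,t_0}\big)^*(s_{j;t_1}) = \big(\mu_{\cA,\bfr}^{t_w,t_0}\big)^*(s_{\sigma(j);t_w})$, hence the ratio $\cF_{\bfr,j;t_1}/\cF_{\bfr,\sigma(j);t_w}$ times a monomial $\prod_i s_{i;t_0}^{G_{i\sigma(j);t_w}-G_{ij;t_1}}$ in the $s$'s equals $1$; by Corollary~\ref{cor:periodicity}(2) the monomial is trivial, and by pointedness of Newton polytopes (now in the doubled variables, using linear independence of the columns of $\tilde B_{t_0}$) the two rational functions coincide, giving $\big(\mu_{\cA,\bfr}^{t_w,t_1}\big)^*(s_{j;t_w}) = s_{\sigma(j);t_1}$.

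From the $\cD$-statement I would then deduce the $\cB$- and $\cG$-statements. For $\cB$, substitute $s_i = x_iu_i+1$ and use that $\nu\colon\cB\to\cD$ intertwines mutations (Theorem~\ref{th:groupoid mutation} was built this way): $\big(\mu_{\cA,\bfr}^{t_w,t_1}\big)^*(x_{j;t_w}u_{j;t_w}+1) = x_{\sigma(j);t_1}u_{\sigma(j);t_1}+1$; combined with the already-known periodicity for the base coordinates $\big(\mu_{\cA,\bfr}^{t_w,t_1}\big)^*(x_{j;t_w}) = x_{\sigma(j);t_1}$ from Definition~\ref{def:periodicity}(2), this yields $\big(\mu_{\cA,\bfr}^{t_w,t_1}\big)^*(u_{j;t_w}) = u_{\sigma(j);t_1}$ on the dense locus where $x_{j;t_1}\ne 0$, and by continuity (using the removable-singularity analysis of Remark~\ref{rem:extended groupoid charts}(3)) everywhere. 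For $\cG$, the same logic via $\lambda\colon\cG\to\cD$ gives $\big(\mu_{\cA,\bfr}^{t_w,t_1}\big)^*(e^{x_{j;t_w}p_{j;t_w}}) = e^{x_{\sigma(j);t_1}p_{\sigma(j);t_1}}$; since $\kappa\colon\cG\to\cB$ is a covering on source fibers and carries the identity section to the identity section, and since both $\mu^{t_w,t_1}$'s are groupoid morphisms fixing the identity image, the branch of the logarithm is pinned down, so $\big(\mu_{\cA,\bfr}^{t_w,t_1}\big)^*(x_{j;t_w}p_{j;t_w}) = x_{\sigma(j);t_1}p_{\sigma(j);t_1}$, hence $\big(\mu_{\cA,\bfr}^{t_w,t_1}\big)^*(p_{j;t_w}) = p_{\sigma(j);t_1}$. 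The $\cX$-side coordinates $q_\ell, v_\ell, t_\ell$ and the $a_{\ell,j}$ follow by the same scheme, using the $\rho$-comorphisms and, for the $a_{\ell,j}$, directly from the integral formula in Theorem~\ref{th:groupoid mutation}: a periodicity forces the bounds $\beta^*(y_k^\varepsilon)$ (resp.\ $\beta^*(\hat y_k^\varepsilon)$) to telescope and the star-involution exponents to square to the identity along the closed loop, again by Corollary~\ref{cor:periodicity}.

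\emph{Expected main obstacle.} The real work is setting up and verifying the $\cS$-analogue of the separation-of-additions formula and checking that its rational-function factor $\cF_{\bfr,j;t}$ has a pointed Newton polytope in the doubled variables with the vertex at the $\bfg$-vector monomial — i.e.\ that the constant-term/leading-term argument of \cite[Prop.~5.6]{FZ07} survives the deformation by the groupoid fiber variables. One has to confirm that the extra factors $Z_k^\circ(\hat y_k^\varepsilon \hat s\text{-monomial})/Z_k^\circ(\hat y_k^\varepsilon)$, which are genuinely rational (not polynomial) in the $s$'s, do not destroy pointedness; this should follow because they specialize to $1$ along the identity section and because sign-coherence of $\bfc$-vectors controls the signs of the exponents appearing, but it requires care. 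A secondary nuisance is the branch-of-logarithm bookkeeping in the $\cG$ and $q_\ell,p_j$ cases over the complex locus, handled as in Remark~\ref{rem:extended groupoid charts}(1) by invoking simple-connectedness of the source fibers and the identity-image normalization.
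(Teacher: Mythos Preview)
Your approach is substantially different from the paper's, and also contains a genuine gap.

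\textbf{The paper's argument.} The proof in the paper is essentially one line: the groupoid mutation $\mu_{-,\bfr}^{t_w,t_1}$ is, by construction, a lift of the base Poisson diffeomorphism $\mu_{-,\bfr}^{t_w,t_1}$ on the cluster chart, and by Definition~\ref{def:periodicity} this base map is (after composing with the permutation $\sigma$) the identity. Proposition~\ref{prop:gpd identity} says the identity map of a Poisson manifold lifts to the identity of its source-simply-connected symplectic groupoid. Since $\cG$ is source-simply-connected (and so are $\cB$, $\cD$ over $\RR_\times$), parts (1)--(7) follow immediately over $\RR_\times$, and then one complexifies. No separation-of-additions formula, no Newton polytopes, no tracking of $\kappa,\nu,\lambda,\rho$.

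\textbf{The gap in your argument.} Your core step reads: ``a periodicity gives $\big(\mu_{\cA,\bfr}^{t_1,t_0}\big)^*(s_{j;t_1}) = \big(\mu_{\cA,\bfr}^{t_w,t_0}\big)^*(s_{\sigma(j);t_w})$, hence the ratio \ldots\ equals $1$.'' But this equality of $s$-variables is precisely what you are trying to prove; it is \emph{not} part of Definition~\ref{def:periodicity}, which only concerns the base coordinates $x,y,z$ and the matrices $\tilde B,\Omega$. In Corollary~\ref{cor:periodicity} one starts from the known periodicity of $x_j$ and deduces that of $G$ and $F$; you cannot run the same argument starting from an unknown periodicity of $s_j$. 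The logic must go the other way: first establish the separation-of-additions formula for $s_{i;t}$, then use the already-proven periodicity of the combinatorial data (Corollary~\ref{cor:periodicity}) to conclude periodicity of $s$.

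Moreover, your conjectured formula is wrong in two respects: the monomial exponents are $C^\vee_{ij;t}$ (the dual $\bfc$-vectors of \eqref{eq:dual c-matrix mutation}), not the $\bfg$-vectors $G_{ij;t}$; and there is no new rational function $\cF$---the non-monomial part is exactly $\prod_\ell \big(\beta^*F_{\bfr,\ell;t}/F_{\bfr,\ell;t}\big)^{B_{i\ell;t}/d_\ell}$, built from the \emph{same} $F$-polynomials (see Theorem~\ref{th:groupoid separation of additions}, proven after this proposition in the paper). With the correct formula, periodicity of $C^\vee$ (via Lemma~\ref{le:dual vectors} and Corollary~\ref{cor:periodicity}(1)), of $F_{\bfr,\ell}$ (Corollary~\ref{cor:periodicity}(3)), and of $B_{i\ell}$ (Definition~\ref{def:periodicity}(4)) immediately give periodicity of $s$---no pointedness argument is needed. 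So a corrected version of your route does work, but it is considerably longer and, in the paper's organization, would invert the dependency between Proposition~\ref{prop:gpd periodicity} and Theorem~\ref{th:groupoid separation of additions}.
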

\begin{proof}
  Recall that the identity map of a Poisson manifold lifts to the identity map of its source-simply-connected symplectic groupoid (c.f. Proposition~\ref{prop:gpd identity}), so parts (1) and (5) immediately follow together with part (4) for the mutations on $\cG_{\cX_\bfr}$ and $\cG_{\cA_\bfr}$.
  If we restrict to the case of $\RR_\times$, then the blow-up groupoid $\cB$ and the symplectic double $\cD$ are both source-simply-connected also.
  Thus parts (2), (3), (6), (7) and (4) follow from Proposition~\ref{prop:gpd identity}; the case of $\CC_\times$ follows when we complexify the mutations. 
\end{proof}

Thus it is reasonable to give the following definition.
\begin{definition}
  Fix an initial $m\times n$ exchange matrix $\tilde B_{t_0}$ and an initial skew-symmetric $m\times m$ matrix~$\Omega_{t_0}$ which is $D$-compatible with $\tilde B_{t_0}$.
  Define groupoids $\cG_{\cX_\bfr}$, $\cB_{\cX_\bfr}$, $\cD_{\cX_\bfr}$ and $\cG_{\cA_\bfr}$, $\cB_{\cA_\bfr}$, $\cD_{\cA_\bfr}$ over the cluster varieties $\cX_\bfr$ and $\cA_\bfr$ respectively by gluing the groupoid charts $\cG^\times_{\cX,\bfr;t}$, $\cB^\times_{\cX,\bfr;t}$, $\cD^\times_{\cX,\bfr;t}$ to $\cG^\times_{\cX,\bfr;t'}$, $\cB^\times_{\cX,\bfr;t'}$, $\cD^\times_{\cX,\bfr;t'}$ respectively and by gluing the groupoid charts $\cG^\times_{\cA,\bfr;t}$, $\cB^\times_{\cA,\bfr;t}$, $\cD^\times_{\cA,\bfr;t}$ to $\cG^\times_{\cA,\bfr;t'}$, $\cB^\times_{\cA,\bfr;t'}$, $\cD^\times_{\cA,\bfr;t'}$ respectively, for $t,t'\in\TT_n$, along the appropriate groupoid mutations $\mu_{\cX,\bfr}^{t',t}$ and $\mu_{\cA,\bfr}^{t',t}$ (where mutations are always performed according to the tropical signs from Definition~\ref{def:tropical signs}).
\end{definition}
\begin{remark}
  Recall that the symplectic groupoid charts $\cG_{\cX,\bfr;t}$ and $\cG_{\cA,\bfr;t}$ were constructed in Section~\ref{sec:local} using Poisson sprays.
  Ideally these Poisson sprays would be compatible with mutations and thus glue to give a global construction of the source-simply-connected symplectic groupoids over the varieties $\cX_\bfr$ and $\cA_\bfr$.
  Unfortunately, this is not the case and we must make due with the iterative construction presented above.
\end{remark}

To understand the iterated mutations $\mu_{-,\bfr}^{t,t_0}$, we introduce analogues of the ``separation of additions'' formulas from Theorem~\ref{th:separation}.
As a first step in lifting the separation of additions formulas~\eqref{eq:separation of additions 1} and~\eqref{eq:separation of additions 2} to the level of groupoids we introduce analogues of the $\bfc$-vectors and $\bfg$-vectors which describe iterations of the cluster mutations.
To begin let $C^\vee_{t_0}$ be an $m\times m$ identity matrix and let $G^\vee_{t_0}$ be an $n\times n$ identity matrix.
We then assign matrices $C^\vee_t:=(C^\vee_{ij;t})$ and $G^\vee_t:=(G^\vee_{ij;t})$ to the vertices $t\in\TT_n$ by the following recursion.
For vertices $t,t'\in\TT_n$ joined by an edge labeled $k$, the matrices $C^\vee_t$ and $C^\vee_{t'}$ are related by
\begin{equation}
  \label{eq:dual c-matrix mutation}
  C^\vee_{ij;t'}=
  \begin{cases}
    -C^\vee_{ij;t} & \text{if $i=k$;}\\
    C^\vee_{ij;t}+[-\varepsilon_{\bfr,k;t} B_{ik;t} r_k]_+ C^\vee_{kj;t} & \text{if $i\ne k$;}
  \end{cases}
\end{equation}
while the matrices $G^\vee_t$ and $G^\vee_{t'}$ are related by
\begin{equation}
  \label{eq:dual g-matrix mutation}
  G^\vee_{ij;t'}=
  \begin{cases}
    -G^\vee_{kj;t}+\sum\limits_{\ell=1}^n [\varepsilon_{\bfr,k;t} r_k B_{k\ell;t}]_+ G^\vee_{\ell j;t} & \text{if $i=k$;}\\
    G^\vee_{ij;t} & \text{if $i\ne k$.}
  \end{cases}
\end{equation}
\begin{remark}
  An easy induction shows that $C^\vee_{ij;t}$ is independent of $t$ for $1\le i\le m$ and $n+1\le j\le m$.
  In particular, $C^\vee_{ij;t}=\delta_{ij}$ for $1\le i\le m$ and $n+1\le j\le m$.
\end{remark}
\begin{lemma}
  \label{le:dual vectors}
  For $1\le k,\ell\le n$, we have $d_kC^\vee_{k\ell;t}=d_\ell C_{\ell k;t}$ and $d_kG^\vee_{k\ell;t}=d_\ell G_{\ell k;t}$ for all $t\in\TT_n$.
\end{lemma}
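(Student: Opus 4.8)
The plan is to argue by induction on the distance from $t_0$ in $\TT_n$. At the root, $C^\vee_{t_0}$ and $C_{t_0}$ are both identity matrices and likewise $G^\vee_{t_0}$ and $G_{t_0}$, so the claimed identities $d_kC^\vee_{k\ell;t_0}=d_\ell C_{\ell k;t_0}$ and $d_kG^\vee_{k\ell;t_0}=d_\ell G_{\ell k;t_0}$ hold trivially (both sides are $d_k\delta_{k\ell}$). For the inductive step, suppose $t$ and $t'$ are joined by an edge labeled $p$ (I avoid reusing $k$) and that the identities hold at $t$; I will verify they propagate to $t'$. Throughout I will use the tropical-sign mutation rules \eqref{eq:c-matrix mutation 2}, \eqref{eq:dual c-matrix mutation}, \eqref{eq:g-matrix mutation}, \eqref{eq:dual g-matrix mutation}, writing $\varepsilon=\varepsilon_{\bfr,p;t}$ for brevity.

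For the $\bfc$-vector identity, there are two regimes. When $\ell=p$ or $k=p$: from \eqref{eq:dual c-matrix mutation}, $C^\vee_{k\ell;t'}=-C^\vee_{k\ell;t}$ exactly when $k=p$, and from \eqref{eq:c-matrix mutation 2}, $C_{\ell k;t'}=-C_{\ell k;t}$ exactly when $k=p$ (the sign flip in \eqref{eq:c-matrix mutation 2} is triggered by the second index being $p$, which here is $k$); these match, and one checks the remaining sub-case $\ell=p$, $k\ne p$ directly using the formulas. When $k\ne p$ and $\ell\ne p$: \eqref{eq:dual c-matrix mutation} gives $C^\vee_{k\ell;t'}=C^\vee_{k\ell;t}+[-\varepsilon B_{kp;t}r_p]_+C^\vee_{p\ell;t}$, while \eqref{eq:c-matrix mutation 2} gives $C_{\ell k;t'}=C_{\ell k;t}+C_{\ell p;t}[\varepsilon r_pB_{pk;t}]_+$. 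Multiplying the first by $d_k$ and the second by $d_\ell$ and using the inductive hypothesis on the first terms, it remains to match $d_k[-\varepsilon B_{kp;t}r_p]_+C^\vee_{p\ell;t}$ with $d_\ell[\varepsilon r_pB_{pk;t}]_+C_{\ell p;t}$. Here I use two facts: $d_pC^\vee_{p\ell;t}=d_\ell C_{\ell p;t}$ (inductive hypothesis applied to the pair $(p,\ell)$), and the skew-symmetry of $DB_t$, i.e.\ $d_kB_{kp;t}=-d_pB_{pk;t}$, which turns $d_k[-\varepsilon B_{kp;t}r_p]_+$ into $d_p[\varepsilon B_{pk;t}r_p]_+$. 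Combining, $d_k[-\varepsilon B_{kp;t}r_p]_+C^\vee_{p\ell;t}=d_p[\varepsilon B_{pk;t}r_p]_+C^\vee_{p\ell;t}=[\varepsilon B_{pk;t}r_p]_+\,d_\ell C_{\ell p;t}$, which is exactly $d_\ell[\varepsilon r_pB_{pk;t}]_+C_{\ell p;t}$ as desired. The $\bfg$-vector identity is entirely parallel: the flip case $i=p$ versus $j=p$ is handled as above, and in the generic case one compares $d_k\big(-G^\vee_{p\ell;t}+\sum_j[\varepsilon r_pB_{pj;t}]_+G^\vee_{j\ell;t}\big)$-type expressions against $d_\ell\big(-G_{\ell p;t}+\sum_i G_{\ell i;t}[-\varepsilon B_{ip;t}r_p]_+\big)$, again invoking the inductive hypothesis termwise together with skew-symmetry of $DB_t$ to convert $d_pB_{pj;t}$ to $-d_jB_{jp;t}$ inside the $[\,\cdot\,]_+$.

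The one point requiring care — and the main obstacle — is bookkeeping the index-swap between the two recursions: the $C^\vee$ and $G^\vee$ mutations act by row operations (first index) while the $C$ and $G$ mutations act by column operations (second index), so the induction must be stated for all pairs $(k,\ell)$ simultaneously at each vertex $t$, and the skew-symmetry relation $d_iB_{ij;t}=-d_jB_{ji;t}$ (which holds at every $t$ since mutation preserves $D$-compatibility) must be applied at precisely the right spot to reconcile the asymmetric placement of $d_k$ versus $d_\ell$ in the two $[\,\cdot\,]_+$ factors. The remaining verifications — the boundary sub-cases where exactly one of the indices equals $p$, and the observation that the frozen columns $n+1\le j\le m$ play no role since we never mutate in those directions — are routine and I would leave them to the reader. $\hfill\qed$
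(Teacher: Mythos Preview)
Your proposal is correct and follows exactly the approach the paper indicates: induction along $\TT_n$ using the recursions \eqref{eq:c-matrix mutation 2}, \eqref{eq:dual c-matrix mutation}, \eqref{eq:g-matrix mutation}, \eqref{eq:dual g-matrix mutation} together with the skew-symmetrizability identity $d_iB_{ij;t}=-d_jB_{ji;t}$. The paper's own proof is a one-line sketch of precisely this argument, and your write-up simply fills in the case analysis and the $d_k\leftrightarrow d_p\leftrightarrow d_\ell$ bookkeeping that the paper leaves implicit.
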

\begin{proof}
  This is an easy induction using the recursions \eqref{eq:c-matrix mutation 2} and \eqref{eq:dual c-matrix mutation} or \eqref{eq:g-matrix mutation} and \eqref{eq:dual g-matrix mutation} together with the identities $d_k B_{k\ell;t}=-d_\ell B_{\ell k;t}$ for all $t\in\TT_n$.
\end{proof}

Using these we have the following analogues of the separation of additions formulas describing the cluster coordinates.
\begin{theorem}
  \label{th:groupoid separation of additions}
  For any vertex $t\in\TT_n$, we have the following formulas for the iterated mutations $\mu_{-,\bfr}^{t,t_0}$, where each $\beta$ below denotes the target map of the appropriate groupoid:
  \begin{align}
    \nonumber \big(\mu_{\cX,\bfr}^{t,t_0}\big)^*(q_{k;t})&=\frac{ \sum_{\ell=1}^n G^\vee_{k\ell;t} y_{\ell;t_0} q_{\ell;t_0} +\frac{1}{d_k} \log\left(\frac{\beta^*\left(F_{\bfr,k;t}\big(y_{1;t_0},\ldots,y_{n;t_0}\big)\right)}{F_{\bfr,k;t}(y_{1;t_0},\ldots,y_{n;t_0})}\right)}{\prod_{\ell=1}^n y_{\ell;t_0}^{C_{\ell k;t}} F_{\bfr,\ell;t}(y_{1;t_0},\ldots,y_{n;t_0})^{B_{\ell k;t}}};\\
    \nonumber \big(\mu_{\cX,\bfr}^{t,t_0}\big)^*(t_{k;t})&=\left(\prod_{\ell=1}^n t_{\ell;t_0}^{G^\vee_{k\ell;t}}\right) \left(\frac{\beta^*\left(F_{\bfr,k;t}\big(y_{1;t_0},\ldots,y_{n;t_0}\big)\right)}{F_{\bfr,k;t}(y_{1;t_0},\ldots,y_{n;t_0})}\right)^{1/d_k};\\
    \nonumber \big(\mu_{\cX,\bfr}^{t,t_0}\big)^*(v_{k;t})&=\frac{\left(\prod_{\ell=1}^n (y_{\ell;t_0} v_{\ell;t_0} + 1)^{G^\vee_{k\ell;t}}\right) \left(\frac{\beta^*\left(F_{\bfr,k;t}\big(y_{1;t_0},\ldots,y_{n;t_0}\big)\right)}{F_{\bfr,k;t}(y_{1;t_0},\ldots,y_{n;t_0})}\right)^{1/d_k} - 1}{\prod_{\ell=1}^n y_{\ell;t_0}^{C_{\ell k;t}} F_{\bfr,\ell;t}(y_{1;t_0},\ldots,y_{n;t_0})^{B_{\ell k;t}}};\\
    \nonumber \big(\mu_{\cA,\bfr}^{t,t_0}\big)^*(p_{i;t})&=\frac{\left(\sum_{j=1}^m C^\vee_{ij;t} x_{j;t_0} p_{j;t_0}\right) + \sum_{\ell=1}^n \frac{B_{i\ell;t}}{d_\ell}\log\left(\frac{\beta^*\left(F_{\bfr,\ell;t}(\hat y_{1;t_0},\ldots,\hat y_{n;t_0})\right)}{F_{\bfr,\ell;t}(\hat y_{1;t_0},\ldots,\hat y_{n;t_0})}\right)}{\left(\prod_{j=1}^m x_{j;t_0}^{G_{ji;t}}\right) F_{\bfr,i;t}(\hat y_{1;t_0},\ldots,\hat y_{n;t_0})};\\
    \label{eq:groupoid separation of additions 2}
    \big(\mu_{\cA,\bfr}^{t,t_0}\big)^*(s_{i;t})&=\left(\prod_{j=1}^m s_{j;t_0}^{C^\vee_{ij;t}}\right) \prod_{\ell=1}^n \left(\frac{\beta^*\left(F_{\bfr,\ell;t}(\hat y_{1;t_0},\ldots,\hat y_{n;t_0})\right)}{F_{\bfr,\ell;t}(\hat y_{1;t_0},\ldots,\hat y_{n;t_0})}\right)^{B_{i\ell;t}/d_\ell};\\
    \nonumber \big(\mu_{\cA,\bfr}^{t,t_0}\big)^*(u_{i;t})&=\frac{\left(\prod_{j=1}^m (x_{j;t_0} u_{j;t_0} + 1)^{C^\vee_{ij;t}}\right) \prod_{\ell=1}^n \left(\frac{\beta^*\left(F_{\bfr,\ell;t}(\hat y_{1;t_0},\ldots,\hat y_{n;t_0})\right)}{F_{\bfr,\ell;t}(\hat y_{1;t_0},\ldots,\hat y_{n;t_0})}\right)^{B_{i\ell;t}/d_\ell} - 1}{\left(\prod_{j=1}^m x_{j;t_0}^{G_{ji;t}}\right) F_{\bfr,i;t}(\hat y_{1;t_0},\ldots,\hat y_{n;t_0})}.
  \end{align}
  The coordinates $a_{\ell,j;t}$ on $T^*L_\bfr$ transform according to
  \begin{align}
    \label{eq:cotangent mutations}
    \big(\mu_{\cX,\bfr}^{t,t_0}\big)^*(a_{\ell,j;t})&=a^\circ_{\ell,j;t_0}+\sum_{i:k_i=\ell} \frac{\varepsilon_{\bfr,\ell;t_i}}{d_\ell} \int_{\big(\mu_{\cX,\bfr}^{t_i,t_0}\big)^*(y_{\ell;t_i}^{\varepsilon_{\bfr,\ell;t_i}})}^{\big(\mu_{\cX,\bfr}^{t_i,t_0}\big)^*\beta^*(y_{\ell;t_i}^{\varepsilon_{\bfr,\ell;t_i}})} \frac{u^{\varepsilon_{\bfr,\ell;t_i} j^\circ-1}}{Z_{\ell;t_i}(u^{\varepsilon_{\bfr,\ell;t_i}})}du;\\
    \big(\mu_{\cA,\bfr}^{t,t_0}\big)^*(a_{\ell,j;t})&=a^\circ_{\ell,j;t_0}+\sum_{i:k_i=\ell} \frac{\varepsilon_{\bfr,\ell;t_i}}{d_\ell} \int_{\big(\mu_{\cA,\bfr}^{t_i,t_0}\big)^*(\hat y_{\ell;t_i}^{\varepsilon_{\bfr,\ell;t_i}})}^{\big(\mu_{\cA,\bfr}^{t_i,t_0}\big)^*\beta^*(\hat y_{\ell;t_i}^{\varepsilon_{\bfr,\ell;t_i}})} \frac{u^{\varepsilon_{\bfr,\ell;t_i} j^\circ-1}}{Z_{\ell;t_i}(u^{\varepsilon_{\bfr,\ell;t_i}})}du;
  \end{align}
  where 
  \begin{itemize}
    \item $t_0$ and $t$ are connected through vertices $t_1,\ldots,t_w$ by mutations in directions $k_0,k_1,\ldots,k_w$;
    \item we set $a^\circ_{\ell,j;t_0}=a_{\ell,j;t_0}$ if there are an even number of $i$ so that $k_i=\ell$ while we take $a^\circ_{\ell,j;t_0}=a^*_{\ell,j;t_0}$ if there is an odd number of $i$ so that $k_i=\ell$;
    \item we set $j^\circ=j$ if there are an even number of $i'$ with $i\le i'\le w$ so that $k_{i'}=\ell$ while $j^\circ=j^*$ if there is an odd number of $i'$ with $i\le i'\le w$ so that $k_{i'}=\ell$.
  \end{itemize}
\end{theorem}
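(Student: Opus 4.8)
The plan is to argue by induction on the length $w$ of the mutation path from $t_0$ to $t$ in $\TT_n$, working over the orthant charts $\cG^\times_{-,\bfr}$, $\cB^\times_{-,\bfr}$, $\cD^\times_{-,\bfr}$ on which every one-step map of Theorem~\ref{th:groupoid mutation} is defined (the extensions to larger loci being governed by Remark~\ref{rem:extended groupoid charts}). The base case $t=t_0$ is immediate: $C^\vee_{t_0}$ and $G^\vee_{t_0}$ are identity matrices, every $F_{\bfr,j;t_0}$ equals $1$, and all the asserted formulas collapse to the identity transformation of the fiber and cotangent coordinates. For the inductive step, let $t'$ be joined to $t$ by an edge labeled $k$ and write $\mu_{-,\bfr}^{t',t_0}=\mu_{-,\bfr}^{k,\varepsilon}\circ\mu_{-,\bfr}^{t,t_0}$ with $\varepsilon=\varepsilon_{\bfr,k;t}$ the tropical sign. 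I would substitute the inductive formulas for $\mu_{-,\bfr}^{t,t_0}$ into the one-step mutation formulas of Theorem~\ref{th:groupoid mutation}, and then check that the result equals the claimed formula for $\mu_{-,\bfr}^{t',t_0}$ after advancing the combinatorial data by the recursions \eqref{eq:c-matrix mutation 2}, \eqref{eq:g-matrix mutation} for $\bfc$- and $\bfg$-vectors, \eqref{eq:dual c-matrix mutation}, \eqref{eq:dual g-matrix mutation} for $C^\vee$ and $G^\vee$, and \eqref{eq:F-polynomial mutation} for $F$-polynomials. The ``base'' pieces of the formulas, namely the denominators and the cluster-coordinate numerators, are handled directly by the classical separation of additions Theorem~\ref{th:separation}; for instance the denominator $\prod_{\ell}y_{\ell;t_0}^{C_{\ell k;t}}F_{\bfr,\ell;t}^{B_{\ell k;t}}$ of the $q_{k;t}$ formula is exactly $\big(\mu_{\cX,\bfr}^{t,t_0}\big)^*(y_{k;t})$ by \eqref{eq:separation of additions 1}, and likewise $\big(\mu_{\cA,\bfr}^{t,t_0}\big)^*(x_{i;t})=\big(\prod_jx_{j;t_0}^{G_{ji;t}}\big)F_{\bfr,i;t}$ on the $\cA$-side.

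To reduce the bookkeeping I would first establish the statement for the symplectic doubles $\cD_{\cX_\bfr}$ and $\cD_{\cA_\bfr}$, whose fiber coordinates $s_j$, $t_\ell$ are genuinely multiplicative, so that the induction there is purely a matter of products and powers; formula~\eqref{eq:groupoid separation of additions 2} is then the cleanest to verify. The corresponding statements for $\cG$ and $\cB$ follow by transporting along the maps $\kappa$, $\nu$, $\lambda$ of the diagram~\eqref{eq:LGpdCD}: these are surjective and intertwine the groupoid mutations—this is recorded by the commutative diagram following Theorem~\ref{th:groupoid mutation}, and holds because $\varphi^1_{-,\bfr}$ is the time-$1$ flow of the same multiplicative Hamiltonian on all three groupoids while $\tau^{k,\varepsilon}_{-,\bfr}$ lifts compatibly—so the substitutions $s_j=x_ju_j+1$ (under $\nu$) and $s_j=e^{x_jp_j}$ (under $\lambda$) convert the $\cD$-formula into the $\cB$- and $\cG$-formulas after a short manipulation. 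Concretely, taking the logarithm of the $s_{i;t}$-formula and using the duality $d_kC^\vee_{k\ell;t}=d_\ell C_{\ell k;t}$ of Lemma~\ref{le:dual vectors} to rewrite the sum $\sum_j C^\vee_{ij;t}\,x_{j;t_0}p_{j;t_0}$, together with the classical expression for $\big(\mu_{\cA,\bfr}^{t,t_0}\big)^*(x_{i;t})$, produces precisely the $p_{i;t}$-formula; the same device with $G^\vee$ replacing $C^\vee$ gives the $q_{k;t}$-formula, and the trailing $-1$ in the $u_{i;t}$ and $v_{\ell;t}$ formulas is simply the artifact of the shifted blow-up coordinate $x_iu_i+1$.

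The cotangent coordinates are the most bookkeeping-heavy but conceptually the simplest. The one-step rule of Theorem~\ref{th:groupoid mutation} adds to $a_{\ell,j}$, after the star-flip $a_{\ell,j}\mapsto a^*_{\ell,j}$ when $\ell=k$, a single term $\frac{\varepsilon}{d_k}\int\frac{u^{\varepsilon j^*-1}}{Z_k(u^\varepsilon)}\,du$ whose bounds are read off the current target map, so composing along the path from $t_0$ to $t$ telescopes these contributions into the sum over steps $i$ with $k_i=\ell$ appearing in \eqref{eq:cotangent mutations}. The pullbacks $\big(\mu_{-,\bfr}^{t_i,t_0}\big)^*$ in the bounds record that the $i$-th integrand and its limits must be expressed in the chart at $t_i$, and the parity prescriptions defining $j^\circ$ and $a^\circ_{\ell,j;t_0}$ merely count how many direction-$\ell$ mutations have been performed, mirroring the star-involution on the $L_\bfr$-factor effected by $\tau^{k,\varepsilon}_{-,\bfr}$.

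The main obstacle I anticipate is the $\cG$-side verification for the coordinates $p_{i;t}$ and $q_{k;t}$: one must match the logarithmic terms generated by the iterated Hamiltonian flows $\varphi^1_{-,\bfr}$ against the ratios $\beta^*F_{\bfr,\ell;t}/F_{\bfr,\ell;t}$ while simultaneously tracking the piecewise-linear $C^\vee$- and $G^\vee$-recursions, and checking that the choice of tropical sign is exactly what renders both the $\bfc$-/$\bfg$-recursions and the $F$-polynomial recursion \eqref{eq:F-polynomial mutation} compatible with this matching. The transport argument via $\lambda$ isolates this difficulty as the statement that, after substituting $s_{j;t_0}=e^{x_{j;t_0}p_{j;t_0}}$, the $\cD$-formula~\eqref{eq:groupoid separation of additions 2} has logarithm equal to $\big(\mu_{\cG,\bfr}^{t,t_0}\big)^*(x_{i;t}p_{i;t})$, which makes the verification a finite algebraic identity rather than an inductive tangle.
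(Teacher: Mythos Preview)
Your proposal is correct and follows essentially the same route as the paper. The paper proves only the $\cD_\cA$ formula~\eqref{eq:groupoid separation of additions 2} by the same induction on path length---substituting the inductive expression into the one-step mutation of Theorem~\ref{th:groupoid mutation}, separating the ``$\bfc$-vector part'' from the ``$F$-polynomial part,'' and invoking Lemma~\ref{le:dual vectors} and the recursion~\eqref{eq:F-polynomial mutation}---leaving the remaining five fiber-coordinate formulas and the cotangent formulas to the reader; your explicit transport along $\kappa,\nu,\lambda$ (which are isomorphisms over the positive orthant) and your telescoping argument for the $a_{\ell,j}$-coordinates are exactly the natural way to fill in what the paper omits.
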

\begin{proof}
  We prove only equation~\eqref{eq:groupoid separation of additions 2}, working by induction on the distance from $t_0$ to $t$ inside $\TT_n$ in this case.
  First consider when $t$ is joined to $t_0$ by an edge labeled $k$.
  Then $F_{\bfr,k;t}=Z_{k;t_0}^\circ(u_k)$ and $F_{\bfr,\ell;t}=1$ for $\ell\ne k$ so that \eqref{eq:groupoid separation of additions 2} just becomes the mutation formula \eqref{eq:DA mutation}.

  Now suppose $t'$ and $t$ are joined by an edge labeled $k$ with $t$ lying on the unique shortest path from $t_0$ to $t'$.
  Then $\mu_{\cA,\bfr}^{t',t_0}=\mu_{\cA,\bfr}^{t',t}\circ\mu_{\cA,\bfr}^{t,t_0}$ so that
  \begin{align*}
    \big(\mu_{\cA,\bfr}^{t',t_0}\big)^*(s_{k;t'})
    &=\big(\mu_{\cA,\bfr}^{t,t_0}\big)^*\circ\big(\mu_{\cA,\bfr}^{t',t}\big)^*(s_{k;t'})\\
    &=\big(\mu_{\cA,\bfr}^{t,t_0}\big)^*(s_{k;t}^{-1})\\
    &=\left(\prod_{j=1}^m s_{j;t_0}^{-C^\vee_{kj;t}}\right) \prod_{\ell=1}^n \left(\frac{F_{\bfr,\ell;t}(\hat y_{1;t_0} s_{1;t_0}^{-d_1},\ldots,\hat y_{n;t_0} s_{n;t_0}^{-d_n})}{F_{\bfr,\ell;t}(\hat y_{1;t_0},\ldots,\hat y_{n;t_0})}\right)^{-B_{k\ell;t}/d_\ell}\\
    &=\left(\prod_{j=1}^m s_{j;t_0}^{C^\vee_{kj;t'}}\right) \prod_{\ell=1}^n \left(\frac{F_{\bfr,\ell;t'}(\hat y_{1;t_0} s_{1;t_0}^{-d_1},\ldots,\hat y_{n;t_0} s_{n;t_0}^{-d_n})}{F_{\bfr,\ell;t'}(\hat y_{1;t_0},\ldots,\hat y_{n;t_0})}\right)^{B_{k\ell;t'}/d_\ell},
  \end{align*}
  where the last equality used that $\ell\ne k$ in the product since $B_{kk;t}=0$.
  On the other hand, writing $\varepsilon=\varepsilon_{\bfr,k;t}$ below, for $i\ne k$ we have
  \begin{align*}
    \big(\mu_{\cA,\bfr}^{t',t_0}\big)^*(s_{i;t'})
    &=\big(\mu_{\cA,\bfr}^{t,t_0}\big)^*\circ\big(\mu_{\cA,\bfr}^{t',t}\big)^*(s_{i;t'})\\
    &=\big(\mu_{\cA,\bfr}^{t,t_0}\big)^*\left(s_{i;t} s_{k;t}^{[-\varepsilon B_{ik;t} r_k]_+} \left(\frac{Z_{k;t}^\circ\left(\hat y_{k;t}^\varepsilon s_{k;t}^{-\varepsilon d_k}\right)}{Z_{k;t}^\circ(\hat y_{k;t}^\varepsilon)}\right)^{-B_{ik;t}/d_k}\right).
  \end{align*}
  We handle the ``$\bfc$-vector part'' of the above expression first.
  Applying \eqref{eq:groupoid separation of additions 2} to the first two terms above and recording only the $\bfc$-vector part gives
  \begin{align*}
    \left(\prod_{j=1}^m s_{j;t_0}^{C^\vee_{ij;t}}\right)\left(\prod_{j=1}^m s_{j;t_0}^{C^\vee_{kj;t}}\right)^{[-\varepsilon B_{ik;t} r_k]_+}
    =\prod_{j=1}^m s_{j;t_0}^{C^\vee_{ij;t} + [-\varepsilon B_{ik;t} r_k]_+ C^\vee_{kj;t}}
    =\prod_{j=1}^m s_{j;t_0}^{C^\vee_{ij;t'}},
  \end{align*}
  which is the desired $\bfc$-vector part of $\big(\mu_{\cA,\bfr}^{t',t_0}\big)^*(s_{i;t'})$.
  Meanwhile the $F$-polynomial part of this is given by
  \begin{align*}
    &\prod_{\ell=1}^n \left(\frac{F_{\bfr,\ell;t}(\hat y_{1;t_0} s_{1;t_0}^{-d_1},\ldots,\hat y_{n;t_0} s_{n;t_0}^{-d_n})}{F_{\bfr,\ell;t}(\hat y_{1;t_0},\ldots,\hat y_{n;t_0})}\right)^{B_{i\ell;t}/d_\ell}
    \prod_{\ell=1}^n \left(\frac{F_{\bfr,\ell;t}(\hat y_{1;t_0} s_{1;t_0}^{-d_1},\ldots,\hat y_{n;t_0} s_{n;t_0}^{-d_n})}{F_{\bfr,\ell;t}(\hat y_{1;t_0},\ldots,\hat y_{n;t_0})}\right)^{[-\varepsilon B_{ik;t} r_k]_+ B_{k\ell;t}/d_\ell}\\
    & = \left(\frac{F_{\bfr,k;t}(\hat y_{1;t_0} s_{1;t_0}^{-d_1},\ldots,\hat y_{n;t_0} s_{n;t_0}^{-d_n})}{F_{\bfr,k;t}(\hat y_{1;t_0},\ldots,\hat y_{n;t_0})}\right)^{B_{ik;t}/d_k}
    \prod_{\ell\ne k} \left(\frac{F_{\bfr,\ell;t}(\hat y_{1;t_0} s_{1;t_0}^{-d_1},\ldots,\hat y_{n;t_0} s_{n;t_0}^{-d_n})}{F_{\bfr,\ell;t}(\hat y_{1;t_0},\ldots,\hat y_{n;t_0})}\right)^{B_{i\ell;t'}/d_\ell-B_{ik;t} [\varepsilon r_k B_{k\ell;t}]_+/d_\ell},
  \end{align*}
  where the final exponent above is given by \eqref{eq:matrix mutation} and by skew-symmetrizability this may be rewritten as $B_{i\ell;t'}/d_\ell-B_{ik;t} [-\varepsilon B_{\ell k;t} r_k]_+/d_k$.
  On the other hand, considering only $\big(\mu_{\cA,\bfr}^{t,t_0}\big)^*\left(\hat y_{k;t}^\varepsilon s_{k;t}^{-\varepsilon d_k}\right)$ gives
  \begin{align*}
    \nonumber &\left(\prod_{\ell=1}^n \hat y_{\ell;t_0}^{\varepsilon C_{\ell k;t}} F_{\bfr,\ell;t}(\hat y_{1;t_0},\ldots,\hat y_{n;t_0})^{\varepsilon B_{\ell k;t}}\right) \left(\prod_{j=1}^m s_{j;t_0}^{-\varepsilon d_k C^\vee_{kj;t}}\right) \prod_{\ell=1}^n \left(\frac{F_{\bfr,\ell;t}(\hat y_{1;t_0} s_{1;t_0}^{-d_1},\ldots,\hat y_{n;t_0} s_{n;t_0}^{-d_n})}{F_{\bfr,\ell;t}(\hat y_{1;t_0},\ldots,\hat y_{n;t_0})}\right)^{-\varepsilon d_k B_{k\ell;t}/d_\ell}\\
    \nonumber &=\left(\prod_{\ell=1}^n \hat y_{\ell;t_0}^{\varepsilon C_{\ell k;t}} F_{\bfr,\ell;t}(\hat y_{1;t_0},\ldots,\hat y_{n;t_0})^{\varepsilon B_{\ell k;t}}\right) \left(\prod_{\ell=1}^n s_{\ell;t_0}^{-\varepsilon d_k C^\vee_{k\ell;t}}\right) \prod_{\ell=1}^n \left(\frac{F_{\bfr,\ell;t}(\hat y_{1;t_0} s_{1;t_0}^{-d_1},\ldots,\hat y_{n;t_0} s_{n;t_0}^{-d_n})}{F_{\bfr,\ell;t}(\hat y_{1;t_0},\ldots,\hat y_{n;t_0})}\right)^{\varepsilon B_{\ell k;t}}\\
    &=\left(\prod_{\ell=1}^n (\hat y_{\ell;t_0} s_{\ell;t_0}^{-d_\ell})^{\varepsilon C_{\ell k;t}}\right) \prod_{\ell=1}^n \left(F_{\bfr,\ell;t}(\hat y_{1;t_0} s_{1;t_0}^{-d_1},\ldots,\hat y_{n;t_0} s_{n;t_0}^{-d_n})\right)^{\varepsilon B_{\ell k;t}},
  \end{align*}
  where the last equality used Lemma~\ref{le:dual vectors}.
  Thus combining the $F$-polynomial part above with the expression $\big(\mu_{\cA,\bfr}^{t,t_0}\big)^*\left(\frac{Z_{k;t}^\circ\left(\hat y_{k;t}^\varepsilon s_{k;t}^{-\varepsilon d_k}\right)}{Z_{k;t}^\circ(\hat y_{k;t}^\varepsilon)}\right)^{-B_{ik;t}/d_k}$ and applying the $F$-polynomial recursion \eqref{eq:F-polynomial mutation} gives the total $F$-polynomial part
  \[\prod_{\ell\ne k} \left(\frac{F_{\bfr,\ell;t}(\hat y_{1;t_0} s_{1;t_0}^{-d_1},\ldots,\hat y_{n;t_0} s_{n;t_0}^{-d_n})}{F_{\bfr,\ell;t}(\hat y_{1;t_0},\ldots,\hat y_{n;t_0})}\right)^{B_{i\ell;t'}/d_\ell} \left(\frac{F_{\bfr,k;t'}(\hat y_{1;t_0} s_{1;t_0}^{-d_1},\ldots,\hat y_{n;t_0} s_{n;t_0}^{-d_n})}{F_{\bfr,k;t'}(\hat y_{1;t_0},\ldots,\hat y_{n;t_0})}\right)^{-B_{ik;t}/d_k}.\]
  But $B_{ik;t'}=-B_{ik;t}$ so that this final expression gives the desired $F$-polynomial part of $\big(\mu_{\cA,\bfr}^{t',t_0}\big)^*(s_{i;t'})$.
\end{proof}

\begin{remark}
  Combining the periodicity result from Proposition~\ref{prop:gpd periodicity} and the iterated mutation formula~\eqref{eq:cotangent mutations} for $T^* L_\bfr$, we obtain the following collection of identities, one for each $1\le\ell\le n$, each choice of $1\le j\le r_\ell$, and any periodicity $(t_1,\ldots,t_w;\sigma)$ of cluster mutations in which $t_i$ is related to $t_{i+1}$ by mutation in direction $k_i$:
  \begin{align}
    \sum_{i:k_i=\ell} \frac{\varepsilon_i}{d_\ell} \int_{\big(\mu_{\cX,\bfr}^{t_i,t_1}\big)^*(y_{\ell;t_i}^{\varepsilon_i})}^{\big(\mu_{\cX,\bfr}^{t_i,t_1}\big)^*\beta^*(y_{\ell;t_i}^{\varepsilon_i})} \frac{u^{\varepsilon_i j^\circ-1}}{Z_{\ell;t_i}(u^{\varepsilon_i})}du=0;\\
    \sum_{i:k_i=\ell} \frac{\varepsilon_i}{d_\ell} \int_{\big(\mu_{\cA,\bfr}^{t_i,t_1}\big)^*(\hat y_{\ell;t_i}^{\varepsilon_i})}^{\big(\mu_{\cA,\bfr}^{t_i,t_1}\big)^*\beta^*(\hat y_{\ell;t_i}^{\varepsilon_i})} \frac{u^{\varepsilon_i j^\circ-1}}{Z_{\ell;t_i}(u^{\varepsilon_i})}du=0;
  \end{align}
  where $j^\circ=j$ if there are an even number of $i'$ with $1\le i'\le i-1$ so that $k_{i'}=\ell$ while $j^\circ=j^*$ if there is an odd number of $i'$ with $1\le i'\le i-1$ so that $k_{i'}=\ell$ and $\varepsilon_i$ denotes the tropical sign in the mutation from $t_i$ to $t_{i+1}$ along the unique mutation sequence from $t_1$ to $t_w$.

  Similar considerations using charts of the form $L_\cX\times T^* L_\bfr$ yield the analogous identities:
  \begin{equation}
    \label{eq:dilogarithm derivatives}
    \sum_{i:k_i=\ell} \frac{\varepsilon_i}{d_\ell} \int_0^{\big(\mu_{\cX,\bfr}^{t_i,t_1}\big)^*(y_{\ell;t_i}^{\varepsilon_i})} \frac{u^{\varepsilon_i j^\circ-1}}{Z_{\ell;t_i}(u^{\varepsilon_i})}du=0.
  \end{equation}
  These identities are closely related to the connection between periodicities of cluster mutations and Roger's dilogarithm identities.
  More precisely, taking the derivative with respect to $z_{\ell,j}$ on the left-hand side of the Roger's dilogarithm identity \cite[Equation (4.28)]{Nak16} results in a number of integral terms coming from taking these derivatives in the Euler dilogarithm terms (c.f.\ \cite[Equation (3.5)]{Nak16}) present in \cite[Equation (4.28)]{Nak16}.
  Equation~\eqref{eq:dilogarithm derivatives} states that the sum of these terms is equal to zero, the remaining collection of logarithm terms also summing to zero by \cite[Equation (4.28)]{Nak16}.
  The vanishing of these logarithm terms seems closely related to the constancy condition \cite[Theorem 4.4]{Nak16}, it would be interesting to make this connection precise.
\end{remark}

\bibliographystyle{hyperamsplain}
\bibliography{cluster_symplectic}

\end{document}